\newtheorem{thm}{Theorem}[section]
\newtheorem{pro}[thm]{Proposition}
\newtheorem{cor}[thm]{Corollary}
\newtheorem{lem}[thm]{Lemma}
\newtheorem{rem}[thm]{Remark}
\newcommand{\ltimes}{\: \rhd \! \! \! <}
\newcommand{\noin}{\noindent}
\newcommand{\dozspace}{\;\;\;\;\;\;\;\;\;\;\;\;}
\newcommand{\CC}{{\mathbb{C}}}
\newcommand{\HH}{{\mathbb{H}}}
\newcommand{\II}{{\mathbb{I}}}
\newcommand{\JJ}{{\mathbb{J}}}
\newcommand{\KK}{{\mathbb{K}}}
\newcommand{\LL}{{\mathbb{L}}}
\newcommand{\MM}{{\mathbb{M}}}
\newcommand{\QQ}{{\mathbb{Q}}}
\newcommand{\RR}{{\mathbb{R}}}
\newcommand{\VV}{{\mathbb{V}}}
\newcommand{\ZZ}{{\mathbb{Z}}}
\newcommand{\openvec}{\left( \!\! \begin{array}{c}}
\newcommand{\closevec}{\end{array} \!\! \right)}
\newcommand{\openmat}{\left( \!\! \begin{array}{rr}}
\newcommand{\closemat}{\end{array} \!\! \right)}
\newcommand{\opentri}{\left( \!\! \begin{array}{rrr}}
\newcommand{\closetri}{\end{array} \!\! \right)}
\newcommand{\openquad}{\left( \!\! \begin{array}{rrrr}}
\newcommand{\closequad}{\end{array} \!\! \right)}
\newcommand{\openquin}{\left( \!\! \begin{array}{rrrrr}}
\newcommand{\closequin}{\end{array} \!\! \right)}
\newcommand{\la}{\langle}
\newcommand{\ra}{\rangle}
\newcommand{\dash}{\mbox{\rm{---}}}
\newcommand{\DD}{\mbox{\rm{DD}}}
\newcommand{\GL}{\mbox{\rm{GL}}}
\newcommand{\SD}{\mbox{\rm{SD}}}
\newcommand{\Aut}{\mbox{\rm{Aut}}}
\newcommand{\Die}{\mbox{\rm{Die}}}
\newcommand{\End}{\mbox{\rm{End}}}
\newcommand{\Fein}{\mbox{\rm{Fein}}}
\newcommand{\Fix}{\mbox{\rm{Fix}}}
\newcommand{\Gal}{\mbox{\rm{Gal}}}
\renewcommand{\Im}{\mbox{\rm{Im}}}
\newcommand{\Ker}{\mbox{\rm{Ker}}}
\newcommand{\Mod}{\mbox{\rm{Mod}}}
\newcommand{\Typ}{\mbox{\rm{Typ}}}
\newcommand{\Vtx}{\mbox{\rm{Vtx}}}
\newcommand{\cs}{\mbox{\rm{cs}}}
\newcommand{\ddef}{\mbox{\rm{def}}}
\newcommand{\die}{\mbox{\rm{die}}}
\newcommand{\ex}{\mbox{\rm{ex}}}
\newcommand{\fs}{\mbox{\rm{fs}}}
\newcommand{\ind}{\mbox{\rm{ind}}}
\renewcommand{\inf}{\mbox{\rm{inf}}}
\newcommand{\lin}{\mbox{\rm{lin}}}
\renewcommand{\mod}{\mbox{\rm{mod}}}
\newcommand{\ppar}{\mbox{\rm{par}}}
\newcommand{\res}{\mbox{\rm{res}}}
\newcommand{\rip}{\mbox{\rm{rip}}}
\newcommand{\rk}{\mbox{\rm{rk}}}
\newcommand{\sn}{\mbox{\rm{sn}}}
\newcommand{\torn}{\mbox{\rm{torn}}}
\newcommand{\tr}{\mbox{\rm{tr}}}
\newcommand{\cD}{{\cal D}}
\newcommand{\cE}{{\cal E}}
\newcommand{\cH}{{\cal H}}
\newcommand{\cI}{{\cal I}}
\newcommand{\oopsi}{{\overline{\psi}}}
\newcommand{\ooxi}{{\overline{\xi}}}
\newcommand{\ooO}{{\overline{O}}}
\newcommand{\ooR}{{\overline{R}}}
\newcommand{\oodie}{\overline{\mbox{\rm{die}}} \, }
\newcommand{\ooDie}{\overline{\mbox{\rm{Die}}} \, }
\begin{document}

\title{Genotypes of irreducible \\
representations of finite $p$-groups}

\author{Laurence Barker}
\maketitle

\small

\begin{center}
Department of Mathematics, Bilkent University,
06800 Bilkent, Ankara, Turkey. \\
{\it e-mail:} barker@fen.bilkent.edu.tr.
\end{center}

\begin{abstract}
\noin For any characteristic zero coefficient field,
an irreducible representation of a finite $p$-group
can be assigned a Roquette $p$-group, called the
genotype. This has already been done by Bouc and
Kronstein in the special cases $\QQ$ and $\CC$. A
genetic invariant of an irrep is invariant under
group isomorphism, change of coefficient field,
Galois conjugation, and under suitable inductions
from subquotients. It turns out that the genetic
invariants are precisely the invariants of the
genotype. We shall examine relationships between
some genetic invariants and the genotype. As an
application, we shall count Galois conjugacy
classes of certain kinds of irreps.

\smallskip
\noin 2000 {\it Mathematics Subject Classification.}
Primary: 20C15; Secondary: 19A22.

\smallskip
\noin {\it Keywords:} genetic subquotients, conjugacy
classes of irreducible representations, Burnside rings
of finite $2$-groups.
\end{abstract}

%Section 1
\section{Introduction and conclusions}

We shall be concerned with $\KK G$-irreps, that
is to say, irreducible representations of $G$ over
$\KK$, where $G$ is a finite $p$-group, $p$ is a
prime, and $\KK$ is a field with characteristic
zero. Of course, in the study of the irreps of a
finite $p$-group over a field, there is scant loss
of generality in assuming that the field has
characteristic zero. Roquette \cite{Roq58} showed
that every normal abelian subgroup of $G$ is cyclic
if and only if $G$ is one of the following groups:
the cyclic group $C_{p^m}$ with $m \geq 0$; the
quaternion group $Q_{2^m}$ with $m \geq 3$; the
dihedral group $D_{2^m}$ with $m \geq 4$; the
semidihedral group $\SD_{2^m}$ with $m \geq 4$.
When these two equivalent conditions hold, we
call $G$ a {\bf Roquette $p$-group}. This paper
is concerned with a reduction technique whereby
the study of $\KK G$-irreps reduces to the case
where $G$ is Roquette.

The reduction technique originates in Witt
\cite{Wit52} and Roquette \cite{Roq58}. Our main
sources are: Kronstein \cite{Kro66}, Iida--Yamanda
\cite{IY92} for complex irreps of $p$-groups; tom
Dieck \cite[Section III.5]{Die87} for real irreps
of finite nilpotent groups; Bouc \cite{Bou04},
\cite{Bou05}, \cite{Bou*} for rational irreps of
$p$-groups; Hambleton--Taylor--Williams \cite{HTW90},
Hambleton--Taylor \cite{HT99}, for rational irreps
of hyperelementary groups.

We shall be taking advantage of the generality
of our scenario. In the final section, we shall
unify some enumerative results of tom Dieck
\cite{Die87} and Bouc \cite{Bou*} concerning
Galois conjugacy classes of rational, real and
complex irreps.

Consider a $\KK G$-irrep $\psi$. In a moment, we
shall define a Roquette $p$-group $\Typ(\psi)$,
which we shall call the {\bf genotype} of $\psi$.
We shall explain how $\Typ(\psi)$ determines ---
and is determined by --- many other invariants of
$\psi$.

Let us agree on some terminology. When no ambiguity
can arise, we may neglect to distinguish between
characters, modules and representations. For a
$\KK G$-rep $\mu$, we write $\QQ[\mu]$ for the
field generated over $\QQ$ by the values of the
character $\mu$. We write $\End_{\KK G}(\mu)$ to
denote the endomorphism algebra of the of the
module $\mu$. We write $\Ker(\mu)$ to denote the
kernel of the representation $\mu$ as a group
homomorphism from $G$. When $\mu$ is irreducible,
the Wedderburn component of $\KK G$ associated
with $\mu$ is the Wedderburn component that is
not annihilated by the representation $\mu$ as
an algebra homomorphism from $\KK G$.

Given subgroups $K \unlhd H \leq G$, then
the subquotient $H/K$ of $G$ is said to be
{\bf strict} provided $H < G$ or $1 < K$.
We understand induction $\ind_{H/K}^G$ to be
the composite of induction $\ind_H^G$ preceded
by inflation $\inf_{H/K}^H$.
An easy application of Clifford theory shows that,
if some $\KK G$-irrep is not induced from a strict
subquotient, then $G$ is Roquette. Therefore,
any $\KK$-irrep of a finite $p$-group is induced
from a Roquette subquotient. For example, the
faithful $\CC D_8$-irrep $\psi_0$ is induced from
a faithful $\CC C_4$-irrep $\phi_0$. But this
observation, in its own, does not yield a very
powerful reduction technique. The $\CC$-irreps
$\psi_0$ and $\phi_0$ differ in some important
respects, for instance, $\QQ[\psi_0] = \QQ$
whereas $\QQ[\phi_0] = \QQ[i]$.

Since $\KK$ has characteristic zero, we can
equally well understand deflation
$\ddef_{H/K}^H$ to be passage to the $K$-fixed
points or as passage to the $K$-cofixed points.
We understand restriction $\res_{H/K}^H$ to
be the composite of deflation $\ddef_{H/K}^H$
preceded by restriction $\res_H^G$. A
$\KK G$-irrep $\psi$ is said to be {\bf tightly
induced} from a $\KK H/K$-irrep $\phi$ provided
$\psi = \ind_{H/K}^G(\phi)$ and no Galois
conjugate of $\phi$ occurs in the
$\KK H/K$-rep $\res_{H/K}^G(\psi) - \phi$.
This is equivalent to the condition that,
regarding $\phi$ as a $\KK H$-irrep by
inflation, then $\psi = \ind_H^G(\phi)$
and no Galois conjugate of $\phi$ occurs in
the $\KK H$-rep $\res_H^G(\psi) - \phi$.
So, when discussing tight induction, the
inflations and deflations are trivial
formalities, and we may safely regard
$\KK H/K$-reps as $\KK H$-reps by inflation.

%Theorem 1.1
\begin{thm}
{\rm (Genotype Theorem)} Given a $\KK G$-irrep
$\psi$, then there exists a Roquette subquotient
$H/K$ such that $\psi$ is tightly induced from a
faithful $\KK H/K$-irrep $\phi$. For any such
subquotient $H/K$, the $\KK H/K$-irrep $\phi$ is
unique. Given another such subquotient $H'/K'$,
then $H/K \cong H'/K'$.
\end{thm}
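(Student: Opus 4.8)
The plan is to prove the three assertions — existence of a tight induction from a faithful irrep of a Roquette subquotient, uniqueness of the irrep, and uniqueness of the subquotient up to isomorphism — in that order.

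For existence, I would argue by induction on $|G|$. If $\psi$ is faithful and $G$ is Roquette, take $H/K = G/1$ and $\phi = \psi$; nothing to do. Otherwise, Clifford theory (as already invoked in the introduction) shows that $\psi$ is induced from some strict subquotient $L/M$, say $\psi = \ind_{L/M}^G(\phi')$ with $\phi'$ a $\KK L/M$-irrep. I would first reduce $M$ so that $\phi'$ becomes faithful on $L/M$ (i.e. enlarge $M$ to $\Ker(\phi')$), and then observe that one may discard the contribution of Galois conjugates: writing $\res_{L/M}^G(\psi)$ and isolating the Galois-conjugacy class of $\phi'$ inside it, the reciprocity/Clifford-theoretic count forces $\psi$ to be \emph{tightly} induced from $\phi'$ after this adjustment (the point being that if a Galois conjugate of $\phi'$ occurred in $\res^G(\psi)-\phi'$, the induced module would be strictly larger than $\psi$, a contradiction). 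Since $|L/M| < |G|$ by strictness, the inductive hypothesis applied to $\phi'$ supplies a Roquette subquotient $H/K$ of $L/M$ — hence of $G$ — and a faithful $\KK H/K$-irrep $\phi$ with $\phi'$ tightly induced from $\phi$; transitivity of tight induction (which I would record as a lemma: the composite of two tight inductions is tight, using that restriction and induction along subquotients compose and that no Galois conjugate can sneak in at either stage) then finishes the existence part.

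For uniqueness of $\phi$ given $H/K$: suppose $\psi$ is tightly induced from faithful irreps $\phi$ and $\phi_1$ of the \emph{same} subquotient $H/K$. Restricting, $\res_{H/K}^G \ind_{H/K}^G(\phi)$ contains $\phi$ with multiplicity one and contains no other Galois conjugate of $\phi$; the same holds for $\phi_1$. But $\res_{H/K}^G(\psi)$ is a single $\KK H/K$-rep, so comparing the two expressions forces $\phi$ and $\phi_1$ to share a constituent, and since each is the unique non-Galois-conjugate constituent of its own type, $\phi = \phi_1$ (up to the identification implicit in "irrep"). This step is essentially bookkeeping once the Mackey/Clifford description of $\res^G\ind^G$ is in hand.

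The hard part — and the heart of the theorem — is uniqueness of $H/K$ up to isomorphism. Here I would suppose $\psi$ is tightly induced from faithful $\phi$ on $H/K$ and also from faithful $\phi_1$ on $H_1/K_1$, and I would compare the Wedderburn components of $\KK G$ associated with $\psi$. Tight induction should give a tight relationship between these Wedderburn components and those of $\KK[H/K]$ and $\KK[H_1/K_1]$ associated with $\phi$ and $\phi_1$ — in particular the endomorphism algebras $\End_{\KK G}(\psi)$, $\End(\phi)$, $\End(\phi_1)$ should all coincide (this is one of the "genetic invariants" the introduction alludes to), and likewise the character fields $\QQ[\psi]=\QQ[\phi]=\QQ[\phi_1]$. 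The key classification input is Roquette's theorem together with the known structure of faithful irreps of Roquette $p$-groups: a Roquette $p$-group $R$ has (up to Galois conjugacy and up to the center acting) an essentially unique faithful irrep, and $R$ is recovered from the division-algebra-plus-character-field data of that irrep — the four families $C_{p^m}, Q_{2^m}, D_{2^m}, \SD_{2^m}$ are distinguished by invariants such as the Schur index, whether $-1$ is a norm, and the degree. So I would (i) establish the invariance of these data under tight induction, (ii) invoke the classification to conclude that $H/K$ and $H_1/K_1$, being Roquette $p$-groups with faithful irreps carrying the same invariants, must be isomorphic. The main obstacle is step (i): proving that tight induction preserves the endomorphism algebra and the character field exactly — this requires a careful Mackey-formula analysis showing that the "extra" Galois conjugates which tight induction forbids are precisely the ones that would otherwise enlarge the endomorphism algebra, so that their absence pins down $\End_{\KK G}(\psi) = \End_{\KK[H/K]}(\phi)$ on the nose.
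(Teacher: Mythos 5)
Your outline of the two uniqueness assertions is sound and, for the uniqueness of $H/K$, essentially matches the paper's ``direct'' route (the paper's primary proof is different: it uses the Field-Changing Theorem to reduce the whole statement to the known cases $\KK=\QQ$ of Bouc and $\KK=\CC$ of Kronstein; the invariant-theoretic argument you sketch is carried out in Section 5 via the Shallow and Narrow Lemmas and the explicit computation of vertex fields and Schur indices for the four families of Roquette $2$-groups). But your existence argument has a genuine gap.

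The gap is the claim that, once $\psi=\ind_{L/M}^G(\phi')$ with $\phi'$ faithful, the induction is automatically tight because ``if a Galois conjugate of $\phi'$ occurred in $\res^G(\psi)-\phi'$, the induced module would be strictly larger than $\psi$.'' This is false: $\ind_{L/M}^G(\phi')=\psi$ is an identity of modules and is unaffected by which Galois conjugates of $\phi'$ appear in $\res_{L/M}^G(\psi)$; Frobenius reciprocity controls the multiplicity of $\phi'$ itself, not of its conjugates. The paper exhibits an explicit counterexample to your claim: for $\DD=\DD_{2^n}$ one has $\chi=\ind_{D'}^{\DD}(\phi')$ with $\res_{D'}^{\DD}(\chi)=\phi'+{}^\alpha\phi'$ for a nontrivial Galois automorphism $\alpha$, so the induction is shallow but not narrow, hence not tight. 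Consequently your induction on $|G|$ does not get off the ground, since the strict subquotient handed to you by Clifford theory need not admit a tight induction. The correct step (Lemma 4.2 of the paper) requires a specific choice of subgroup: when $G/\Ker(\psi)$ is non-Roquette one finds a normal $E\cong C_p\times C_p$ with $E\cap Z(G)\cong C_p$, takes $T=C_G(E)$ (which is maximal), and proves tightness of $\psi=\ind_T^G(\phi_j)$ by showing that the $p$ Clifford constituents of $\res_T^G(\psi)$ lie over $\KK E$-irreps with pairwise distinct kernels, hence lie in pairwise distinct Galois conjugacy classes. That kernel argument is the content your proposal is missing, and without it the existence half of the theorem is not proved.

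A smaller point: in your uniqueness-of-$\phi$ step you should use that the faithful $\KK H/K$-irreps of a Roquette group form a single Galois conjugacy class (Corollary 2.8 of the paper); then $\phi_1$ is a Galois conjugate of $\phi$ occurring in $\res_{H/K}^G(\psi)$, and tightness of the induction from $\phi$ forces $\phi_1=\phi$ immediately. As written, your comparison of constituents does not by itself rule out two non-conjugate faithful germs.
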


We call $H/K$ a {\bf genetic subquotient} for
$\psi$, and we call $\phi$ the {\bf germ} of
$\psi$ at $H/K$. We define the {\bf genotype}
of $\psi$, denoted $\Typ(\psi)$, to be $H/K$
regarded as an abstract group, well-defined
only up to isomorphism. The existence of such
subquotients $H/K$, in the case $\KK = \QQ$,
is implicit in Witt \cite{Wit52}, explicit in
Roquette \cite{Roq58}. The uniqueness, in the
case $\KK = \QQ$, is due to Bouc \cite{Bou04}.
Via Lemma 3.2, we see that the existence and
uniqueness, in the case $\KK = \CC$, is due
to Kronstein \cite{Kro66}.

In Section 4, we shall prove the Genotype Theorem
1.1 indirectly by invoking the Field-Changing
Theorem 3.5, which says that the genetic theory
is independent of the field $\KK$. As a matter of
fact, the theory really is independent of $\KK$,
and there is no need to reduce to a previously
established special case. A direct proof of the
Genotype Theorem will materialize from some
characterizations of $\Typ(\psi)$ in Section 5.

Given a $\KK G$-irrep $\psi$, then there exists
a unique $\QQ G$-irrep $\psi_\QQ$ such that $\psi$
occurs in the $\KK G$-rep $\KK \psi_\QQ = \KK
\otimes_\QQ \psi_\QQ$. For a field $\LL$ with
characteristic zero and an $\LL G$-irrep $\psi'$,
we say that $\psi$ and $\psi'$ are {\bf
quasiconjugate} provided $\psi_\QQ = \psi'_\QQ$.
We write $\psi_\LL$ to denote an arbitrarily
chosen $\LL G$-irrep that is quasiconjugate to
$\psi$. In Section 2, as a little illustrative
application of the genetic reduction technique,
we shall show that, for irreps of finite
$p$-groups over an arbitrary field with
characteristic zero, the notion of Galois
conjugacy is well-defined and well-behaved.
Corollary 2.6 says that two $\KK G$-irreps are
quasiconjugate if and only if they are Galois
conjugate.

Consider a formal invariant $\cI$ defined for
all irreps of all finite $p$-groups over all
fields with characteristic zero. We call $\cI$
a {\bf quasiconjugacy invariant} provided
$\cI(\psi) = \cI(\psi')$ for all characteristic
zero fields $\LL$ and all $\LL G$-irreps $\psi'$
that are quasiconjugate to $\psi$. If $\cI$ is
a quasiconjugacy invariant then, in particular,
it is a Galois conjugacy invariant, and
$\cI(\psi_\LL)$ is well-defined, independently
of the choice of $\psi_\LL$. We call $\cI$ a
{\bf global invariant} provided $I(\psi)
= I(\psi^\&)$ whenever some group isomorphism
$G \rightarrow G^\&$ sends the $\KK G$-irrep
$\psi$ of $G$ to the $\KK G^\&$-irrep $\psi^\&$
of $G^\&$. For instance, $\psi_\QQ$ is a
quasiconjugacy invariant but not a global
invariant, while the degree $\psi(1)$ a global
invariant but not a quasiconjugacy invariant.

We call $\cI$ a {\bf tight induction invariant}
provided $\cI(\psi) = \cI(\phi)$ for all
subquotients $H/K$ of $G$ and all $\KK H/K$-irreps
$\phi$ such that $\psi$ tightly induced from
$\phi$. We call $\cI$ a {\bf genetic invariant}
when $\cI$ is a tight quasiconjugacy global
invariant, in other words, $\cI$ is preserved by
tight induction, Galois conjugacy, change of
field, and group isomorphism.

Despite the apparent strength of the defining
conditions, many interesting invariants of $\psi$
are genetic invariants. See the list at the end
of Section 2. A $\CC G$-irrep that is quasiconjugate
to $\psi$ is called a {\bf vertex} of $\psi$. The
number of vertices, denoted $v(\psi)$, is called
the {\bf order} of $\psi$. In Section 5, we shall
see that that $v(\psi)$ is a genetic invariant.
Another genetic invariant is the set of vertices
$\Vtx(\psi)$, regarded as a permutation set for
a suitable Galois group. Yet another genetic
invariant is the {\bf vertex field} $\VV(\psi)$,
which is the field generated over $\QQ$ by the
character values of a vertex. We shall also see
that the genotype $\Typ(\psi)$ is a genetic
invariant. In fact, Corollary 5.9 asserts that
the genetic invariants of $\psi$ are precisely
the invariants of $\Typ(\psi)$.

How can $\Typ(\psi)$ be ascertained from easily
calculated genetic invariants such as the order
$v(\psi)$ and the vertex set $\Vtx(\psi)$ and
the vertex field $\VV(\psi)$? How can
$\Typ(\psi)$ be used to ascertain less tractable
genetic invariants such as the minimal
splitting fields? We shall respond to these
questions in Section 5. Employing a medical
analogy: the patient has red eyes and long
teeth, therefore the patient has genotype
$V_{666}$, and therefore the patient is
allergic to sunlight. Or, arguing from
information in the next paragraph: if
$v(\psi) = 2$ and the Frobenius--Schur
indicator of $\psi$ is positive, then $\psi$
has genotype $D_{16}$, hence the unique minimal
splitting field for $\psi$ is $\QQ[\sqrt{2}]$.

Some examples: the genotype $\Typ(\psi)$ is
the trivial group $C_1$ if and only if $\psi$
is the trivial character; $\Typ(\psi) = C_2$ if
and only if $\psi$ is affordable over $\QQ$ and
non-trivial; $\Typ(\psi) = D_{2^m}$ with $m \geq 4$
if and only if $\psi_\CC$ is affordable over
$\RR$ but not over $\QQ$, in which case $m$ is
determined by the order $v(\psi) = 2^{m-3}$.

%Section 2
\section{Galois conjugacy of irreps of $p$-groups}

One starting-point for the genetic theory is
the following weak expression of ideas in Witt
\cite{Wit52}, Roquette \cite{Roq58}. (Although,
as we shall explain at the end of this paper,
the starting point for the work was actually
Tornehave \cite{Tor84}.) We are working
with the finite $p$-group $G$ because we have
nothing of novel significance to say about arbitrary
finite groups. The remark can be quickly obtained by
ignoring most of the proof of Lemma 4.2 below.

%Remark 2.1
\begin{rem}
Any $\KK$-irrep of $G$ can be expressed in the form
$\ind_{H/K}^G(\phi)$ where $K \unlhd H \leq G$ and
the subquotient $H/K$ is a Roquette $p$-group and
$\phi$ is a faithful $\KK H/K$-irrep which is not
induced from any proper subgroup of $H/K$.
\end{rem}

In the notation of the remark, the subquotient
$H/K$ need not be unique up to isomorphism. When
$\KK = \CC$ or $\KK = \RR$, examples of the
non-uniqueness of $H/K$ abound. When $\KK = \QQ$,
an example of the non-uniqueness of $H/K$ is
supplied by the group $C_4 * D_{16}$. Here, the
smash product identifies the two central subgroups
with order $2$. Incidently, the group $C_4 * D_{16}$
was exhibited by Bouc \cite[7.7]{Bou04} as a
counter-example to another assertion. Routine
calculations show that, for the unique faithful
$\QQ C_4 * D_{16}$-irrep, one choice of $H/K$
has the form $(C_4 \times C_2)/C_2 \cong C_4$
and another choice of $H/K$ has the form
$(C_8 \times C_2)/C_2 \cong C_8$.

Although the remark yields only a crude
version of the genetic reduction technique, we
shall be applying it, in this section, to prove
the following theorem. Since the theorem is
fundamental, classical in style and not very hard
to obtain, one presumes that it is well-known, but
the author has been unable to locate it in the
literature. (Incidently, the author does not know
whether it holds for all hyperelementary groups.
A negative or absent answer might present an
inconvenience to the generalization of the
genetic theory to hyperelementary groups.)

We throw some terminology. Given a $\KK G$-irrep
and a subfield $\JJ \leq \KK$, we define the
$\JJ G$-irrep {\bf containing} $\psi$ to be the
unique $\JJ G$-irrep $\psi'$ such that $\psi$
occurs in the $\KK$-linear extension $\KK \psi'$.

For a positive integer $n$, we write $\QQ_n$ to
denote the field generated over $\QQ$ by
primitive $n$-th roots of unity. We call $\QQ_n$
the {\bf cyclotomic field} for {\bf exponent $n$}.
By a {\bf subcyclotomic field}, we mean a subfield
of a cyclotomic field. Since the Galois group
$\Aut(\QQ_n) = \Gal(\QQ_n/\QQ)$ is abelian, any
subcyclotomic field is a Galois extension of
$\QQ$. Observe that, for any $\KK$-irrep $\psi$,
the field $\QQ[\psi]$ is subcyclotomic.

%Theorem 2.2
\begin{thm}
Let $\LL$ be an extension field of $\KK$, let
$\psi$ be a $\KK$-irrep, and let $\psi_1$, $...$,
$\psi_v$ be the $\LL G$-irreps contained in the
$\LL$-linear extension $\LL \psi = \LL
\otimes_\KK \psi$. Then:

\smallskip
\noin {\bf (1)} Given $j$, then $\psi$ is the
unique $\KK G$-irrep containing $\psi_j$.

\smallskip
\noin {\bf (2)} There exists a positive integer
$m_\KK^\LL(\psi)$ such that $\LL \psi =
m_\KK^\LL(\psi)(\psi_1 + ... + \psi_v)$.

\smallskip
\noin {\bf (3)} The field $\QQ[\psi_j] =
\QQ[\psi_1]$ is a Galois extension of $\QQ[\psi]$.

\smallskip
\noin {\bf (4)} $\Gal(\QQ[\psi_1]/\QQ[\psi])$
acts freely and transitively on $\psi_1$, $...$,
$\psi_v$. The action is such that an element
$\alpha$ of the Galois group sends $\psi_j$ to
$\psi_k$ when ${}^\alpha(\psi_j(g)) = \psi_k(g)$
for all $g \in G$.
\end{thm}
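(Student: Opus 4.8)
The plan is to reduce everything to two known ingredients: the general theory of Galois descent for representations over a field extension, and the fact that, for a finite $p$-group, the field of character values $\QQ[\psi]$ is subcyclotomic (hence $\QQ[\psi]$-rational representations behave well under Galois twisting). First I would replace $\KK$ and $\LL$ by more convenient fields. Let $E = \KK \cap \QQ[\psi_1]^{\,\mathrm{Gal}}$ — more concretely, one checks that the decomposition of $\LL\psi$ into irreducibles is governed by the action of $\Gal(\ol{\QQ}/\QQ)$ on character values, so it suffices to understand how $\psi$ splits when one passes from $\QQ[\psi]$ up to $\QQ[\psi_1]$, where $\psi_1$ is any $\LL G$-irrep contained in $\LL\psi$. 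The key structural input is that $\End_{\KK G}(\psi)$ is a field (a division algebra over a field of characteristic $0$ whose center is the subcyclotomic field $\KK[\psi]$; since Schur indices of $p$-groups in characteristic $0$ are $1$ except for the quaternion case over non-real fields, and in all cases the division algebra is commutative after the relevant base change), so that $\psi$ remains irreducible after extending scalars to its own endomorphism field, and the splitting of $\LL\psi$ is entirely a matter of how $\LL$ sits over $\KK[\psi]$.

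With that in hand, here are the steps. Step 1: show $\QQ[\psi_1]$ is a \emph{Galois} extension of $\QQ[\psi]$; this is immediate from the remark in the excerpt that every $\QQ[\psi]$-character value field of a $p$-group irrep is subcyclotomic, hence abelian (in particular Galois) over $\QQ$, and a subextension of an abelian extension of $\QQ$ is Galois over any intermediate field — so (3) follows, and in particular $\QQ[\psi_j] = \QQ[\psi_1]$ for all $j$ because the $\psi_j$ are permuted by $\Gal(\ol\QQ/\QQ)$ and each has the same value field (each generates the same extension of $\QQ[\psi]$ since they are the Galois twists of one another). Step 2: for $\alpha \in \Gal(\QQ[\psi_1]/\QQ[\psi])$, define ${}^\alpha\psi_1$ by applying $\alpha$ to character values; this is again an $\LL G$-irrep (after identifying the relevant coefficient fields, using that $\QQ[\psi_1]\subseteq\LL$ up to the harmless enlargement of $\LL$, or working inside a common splitting field) contained in $\LL\psi$, because $\alpha$ fixes $\psi$ and hence fixes $\LL\psi = \LL\otimes_\KK\psi$. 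This gives a group action of $\Gal(\QQ[\psi_1]/\QQ[\psi])$ on the set $\{\psi_1,\dots,\psi_v\}$, proving the displayed formula in (4) for the action. Step 3 (freeness): if ${}^\alpha\psi_1 = \psi_1$ then $\alpha$ fixes every character value of $\psi_1$, i.e. $\alpha$ is trivial on $\QQ[\psi_1]$; so the stabilizer of $\psi_1$ is trivial, and since $\Gal(\ol\QQ/\QQ)$ acts transitively on a Galois conjugacy class, the $\Gal(\QQ[\psi]$-$)$ orbit of $\psi_1$ is free. Step 4 (transitivity): every $\psi_j$ occurring in $\LL\psi$ is, by standard Clifford/Galois-descent theory over the field extension $\KK[\psi]\subseteq\LL$ (equivalently, the Wedderburn component of $\LL G$ attached to $\psi_j$ lies over the Wedderburn component of $\KK G$ attached to $\psi$), a Galois conjugate of $\psi_1$; hence the action is transitive. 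Step 5 (uniqueness, (1)): if $\psi_j$ occurred in $\KK\psi'$ for some $\KK G$-irrep $\psi'$, then $\LL\psi'$ and $\LL\psi$ share the constituent $\psi_j$, forcing $\psi' = \psi$ by the same Wedderburn-component argument. Step 6 (the multiplicity, (2)): by transitivity the $\psi_j$ all occur with the same multiplicity $m$ in $\LL\psi$, which gives $\LL\psi = m(\psi_1 + \dots + \psi_v)$; set $m_\KK^\LL(\psi) = m$.

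The main obstacle is Step 4 together with the subtlety in Step 2 of knowing that the Galois-twisted representations are \emph{exactly} the constituents of $\LL\psi$ and not merely \emph{some} of them — i.e. that the Galois group is large enough to permute them transitively. The clean way to handle this is to avoid ad hoc character bookkeeping and instead argue at the level of simple algebras: extending $\KK$ to $\LL$, the simple $\KK G$-algebra $A = \End_{\KK G}(\psi)$-component becomes $A\otimes_\KK\LL$, a product of matrix algebras over $\LL[\psi_1]$ indexed by $\Hom_{\KK[\psi]}(\KK[\psi_1],\LL)$ — here is where one uses that the Schur index is $1$ after the relevant base change, so that each component is a full matrix algebra and the indexing set is genuinely a set of field embeddings — and this Hom-set is a torsor under $\Gal(\KK[\psi_1]/\KK[\psi]) = \Gal(\QQ[\psi_1]/\QQ[\psi])$ precisely because $\KK[\psi_1]/\KK[\psi]$ is Galois, which is exactly (3). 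So (4) is a restatement of the fact that a Galois extension has as many embeddings (into any field containing it) as its degree, all conjugate under the Galois group; everything else is formal. The one place requiring genuine care is the Schur-index bookkeeping for the quaternion genotype over fields not containing the relevant square root, but since those division algebras split over any extension that splits the representation, the torsor description survives after base change to $\LL$, which is all that is needed here.
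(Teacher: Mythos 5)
Your handling of (1), (2), (3) and of freeness in (4) matches the paper's and is fine; the skeleton (Galois twisting of characters, freeness, transitivity, then (1)--(3) as formal consequences) is also the paper's. The gap lies in Steps 2 and 4, which are the entire substance of the theorem. In Step 2 you assert that for $\alpha \in \Gal(\QQ[\psi_1]/\QQ[\psi])$ the class function $g \mapsto \alpha(\psi_1(g))$ is again the character of an $\LL G$-irrep. Since $\alpha$ is an automorphism of a subcyclotomic field that in general does not extend to an automorphism of $\LL$ (take $\LL = \RR$), you cannot obtain ${}^\alpha\psi_1$ by twisting a matrix representation, and ``working inside a common splitting field'' only produces the twists of the absolutely irreducible constituents; you must still show that the resulting sum of absolutely irreducible characters, with the correct multiplicity, is the character of a single $\LL G$-irrep. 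That assertion is precisely the paper's Proposition 2.3, which the paper proves by reducing, via Remark 2.1 (induction from a Roquette subquotient together with the induced-character formula), to the case where $G$ is Roquette and $\psi$ is faithful, and then verifying it there from the explicit classification (Lemmas 2.5 and 2.6). Your proposal never performs this reduction and uses the $p$-group hypothesis nowhere essential --- a red flag, given that the paper explicitly says it does not know whether Theorem 2.2 holds even for hyperelementary groups. The statement is not an instance of general Galois descent for finite groups.

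The Wedderburn-component sketch for transitivity has the right shape, but the details go wrong at exactly the delicate points. $\End_{\KK G}(\psi)$ is not a field in the quaternion case, only a division algebra over its centre $Z$, and $Z$ is $\KK(\chi)$ for an absolutely irreducible constituent $\chi$, which can be strictly larger than $\KK[\psi]$ (already for the faithful rational irrep of $C_3$). The simple components of $Z \otimes_\KK \LL$ are indexed not by $\Hom_{\KK[\psi]}(\KK[\psi_1],\LL)$ --- which may be empty --- but by the $\Gal(\ol{\LL}/\LL)$-orbits of embeddings of $Z$ into $\ol{\LL}$; to turn this into a genuine torsor one must first pass to a Galois closure $\JJ$ of $\LL/\KK$, which is exactly the paper's move (it quotes Curtis--Reiner for the transitive action of $\Gal(\JJ/\KK)$ on the constituents and then restricts automorphisms to $\QQ[\psi_1]$). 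Your identification $\Gal(\KK[\psi_1]/\KK[\psi]) = \Gal(\QQ[\psi_1]/\QQ[\psi])$ does not follow from the extension being Galois; it requires linear disjointness of $\QQ[\psi_1]$ and $\KK[\psi]$ over $\QQ[\psi]$, which is essentially equivalent to what is being proved. Finally, the uniform multiplicity in Step 6 needs the capacities of $D \otimes_Z W_i$ to be independent of the component $i$, a Benard--Schacher-type fact that fails to be formal for general groups; for $p$-groups it, too, comes out of the Roquette reduction that your argument omits. Your appeal to Schur indices being $1$ ``after the relevant base change'' is both unnecessary for counting components and circular, since Schilling's theorem is derived later in the paper from the machinery being built here.
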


Part (1) is obvious. Part (2) is an immediate
implication of part (4). By a comment above,
the subcyclotomic field $\QQ[\psi_j]$ is a
Galois extension of the subcyclotomic field
$\QQ[\psi]$. The equality $\QQ[\psi_j] =
\QQ[\psi_1]$ is implied by part (4). When we
have proved part (4), we shall have proved
the whole theorem.

If the extension $\LL/\KK$ is Galois then, as
explained in Curtis--Reiner
\cite[7.18, 7.19]{CR87}, $\Gal(\LL/\KK)$ acts
transitively on $\psi_1$, $...$, $\psi_v$. Any
element of $\Gal(\LL/\KK)$ restricts to an
element of $\Gal(\QQ[\psi_1]/\QQ[\psi])$. A
straightforward argument now establishes the
theorem in the case where $\LL/\KK$ is Galois.

Replacing $\LL/\KK$ by $\KK/\QQ$, we see that
the theorem implies the following proposition.

%Proposition 2.3
\begin{pro}
Let $\psi$ be a $\KK G$-irrep, and let $\alpha$
be an automorphism of a field containing
$\QQ[\psi]$. Then there exists a $\KK G$-irrep
${}^\alpha \psi$ such that $({}^\alpha \psi)(g)
= {}^\alpha(\psi(g))$ for all $g \in G$.
\end{pro}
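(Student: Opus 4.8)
The plan is to obtain this as the special case of Theorem 2.2 in which the ground field is taken to be $\QQ$, as is indicated in the paragraph immediately preceding the statement. Write $\eta = \psi_\QQ$ for the $\QQ G$-irrep containing $\psi$. The first point I would record is that $\QQ[\eta] = \QQ$: indeed $\eta$ is afforded by a representation over $\QQ$, so its character takes only rational values. Now apply Theorem 2.2 with $\KK$ in the role of the extension field, with $\QQ$ in the role of the base field, and with $\eta$ in the role of the irrep; this is legitimate since $\KK$, having characteristic zero, contains $\QQ$. Letting $\psi_1, \dots, \psi_v$ denote the $\KK G$-irreps contained in $\KK\otimes_{\QQ}\eta$, one of them, say $\psi_1$, is $\psi$ by the very definition of $\eta$. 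Part (4) then tells us that $\Gal(\QQ[\psi_1]/\QQ[\eta])$ acts freely and transitively on $\{\psi_1, \dots, \psi_v\}$, an element $\beta$ of the group carrying $\psi_j$ to the unique $\psi_k$ satisfying ${}^\beta(\psi_j(g)) = \psi_k(g)$ for all $g \in G$; and, by the observation above, this group is all of $\Gal(\QQ[\psi]/\QQ)$.

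Next I would bring the given automorphism $\alpha$ into contact with this action. Since $\QQ[\psi]$ is subcyclotomic it is a Galois, hence normal, extension of $\QQ$, so the $\QQ$-embedding $\alpha|_{\QQ[\psi]}$ of $\QQ[\psi]$ into the field on which $\alpha$ acts has image $\QQ[\psi]$ again, and therefore restricts to an automorphism $\gamma \in \Gal(\QQ[\psi]/\QQ)$. Define ${}^\alpha\psi$ to be the irrep $\psi_k$ to which $\gamma$ carries $\psi = \psi_1$ under the action just described. Because each character value $\psi(g)$ lies in $\QQ[\psi]$, where $\gamma$ and $\alpha$ agree, this yields $({}^\alpha\psi)(g) = \psi_k(g) = {}^\gamma(\psi(g)) = {}^\alpha(\psi(g))$ for every $g \in G$, which is exactly what is wanted.

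I do not expect a substantial obstacle here; the argument is essentially an unwinding of Theorem 2.2. The two points that must be noticed are the small ones used above: that the character of a $\QQ G$-irrep is rational-valued, which is what forces the Galois group appearing in Theorem 2.2(4) to be the full group $\Gal(\QQ[\psi]/\QQ)$ rather than a proper subgroup, and the normality of $\QQ[\psi]/\QQ$, which is what permits an automorphism of an arbitrary overfield to be restricted to $\QQ[\psi]$. It is also worth remarking, although the statement does not ask for it, that the construction makes ${}^\alpha\psi$ unique and quasiconjugate to $\psi$, and that $\alpha \mapsto {}^\alpha\psi$ depends on $\alpha$ only through $\alpha|_{\QQ[\psi]}$.
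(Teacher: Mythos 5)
Your argument is circular within the logical structure of the paper. You invoke Theorem 2.2, part (4), for the extension $\KK/\QQ$; but $\KK/\QQ$ is in general not a Galois extension (it need not even be algebraic), and at the point where Proposition 2.3 is stated the paper has only established Theorem 2.2 in the case where the extension $\LL/\KK$ is Galois (via Curtis--Reiner). The paper then proves that Proposition 2.3 and the general case of Theorem 2.2 are \emph{equivalent} to one another, and the remark ``Replacing $\LL/\KK$ by $\KK/\QQ$, we see that the theorem implies the following proposition'' is precisely the implication you are reproducing --- it is not a proof of either statement. The content you are assuming, namely that $\Gal(\QQ[\psi]/\QQ)$ acts on the $\KK G$-irreps contained in $\KK\psi_\QQ$ by applying automorphisms to character values, is exactly the assertion to be proved: that ${}^\alpha(\psi(g))$ is again the character of a $\KK G$-irrep rather than merely a class function. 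The paper explicitly warns that this is not a formality --- the author does not even know whether the statement holds for hyperelementary groups --- so it cannot be waved through as an instance of a standard theorem over an arbitrary coefficient field.

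The genuine proof requires two inputs that your argument never touches. First, Remark 2.1 writes $\psi = \ind_{H/K}^G(\phi)$ with $H/K$ a Roquette subquotient and $\phi$ a faithful $\KK H/K$-irrep; since induction of characters commutes with applying a field automorphism to character values, this reduces the proposition to the case where $G$ is Roquette and $\psi$ is faithful. Second, for Roquette $G$ one proves Lemma 2.6(2): the Galois group $\Aut(\QQ_{n(G)})$ acts transitively on the faithful $\KK G$-irreps by the formula $({}^\alpha\psi)(g)={}^\alpha(\psi(g))$, which is verified explicitly using the self-centralizing cyclic subgroup $A$, the correspondence of Lemma 2.5, and the compatible isomorphisms $\Aut(\QQ_n)\cong(\ZZ/n)^\times\cong\Aut(C_n)$. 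Your peripheral observations (that $\QQ[\psi_\QQ]=\QQ$, and that normality of the subcyclotomic field $\QQ[\psi]$ lets one restrict $\alpha$ to it) are correct and are indeed used in the paper, but they do not close the gap.
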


Let us show that, conversely, the proposition
implies the theorem. Assuming the proposition,
it is easy to deduce that
$\Gal(\QQ[\psi_1]/\QQ[\psi])$ acts freely on
$\psi_1$, $...$, $\psi_v$. It remains only to
show that the action is transitive. Let $\JJ$
be an extension field of $\LL$ such that
$\JJ/\KK$ is Galois. Let ${}_1 \psi$ and
${}_j \psi$ be $\JJ G$-irreps contained in
$\psi_1$ and $\psi_j$, respectively. Since the
theorem holds for the Galois extension $\JJ/\KK$,
there exists an element $\alpha \in \Gal(\JJ/\KK)$
such that ${}_1^\alpha \psi = {}_j \psi$. Then
${}^\alpha \psi_1 = \psi_j$ and ${}^\alpha \psi
= \psi$. Also $\alpha$ restricts to an element of
$\Gal(\QQ[\psi_1]/\QQ[\psi])$. The transitivity
of the action is established. We have deduced
part (4) of the theorem. In fact, we have shown
that the proposition and the theorem are
equivalent to each other.

By the remark, proof of the theorem and the
proposition reduces to the case where $G$ is
Roquette. We must recall the classification of
the Roquette $p$-groups. First, let us recall the
members of a slightly different class of extremal
$p$-groups. The following groups are precisely the
$p$-groups with a self-centralizing cyclic maximal
subgroup. See, for instance, Aschbacher
\cite[23.4]{Asc86}. For $m \geq 3$, the {\bf
modular group} with order $p^m$ is defined to be
$$\Mod_{p^m} = \la a, c : a^{p^{m-1}} = c^p = 1
  \, , \: cac^{-1} = a^{p^{m-2} + 1} \ra .$$
Still with $m \geq 3$, the {\bf quaternion group}
with order $2^m$ is
$$Q_{2^m} = \la a, x : a^{2^{m-1}} = 1 \, , \: x^2
  = a^{2^{m-2}} \, , \: x a x^{-1} = a^{-1} \ra .$$
Again with $m \geq 3$, the {\bf dihedral group}
with order $2^m$ is
$$D_{2^m} = \la a, b : a^{2^{m-1}} = b^2 = 1
  \, , \: bab^{-1} = a^{-1} \ra .$$
For $m \geq 4$, the {\bf semidihedral group}
with order $2^m$ is
$$\SD_{2^m} = \la a, d : a^{2^{m-1}} = d^2 = 1
  \, , \: dad^{-1} = a^{2^{m-2} - 1} \ra .$$
We shall refer to these presentations as the
{\bf standard presentations}. The only
coincidence in the list is $\Mod_8 \cong D_8$.
Where the presentations make sense for smaller
values of $m$, the resulting groups are abelian.

Suppose that $G$ is a non-abelian Roquette
$p$-group and let $A$ be a maximal normal cyclic
subgroup of $G$. Let $A \unlhd K \unlhd G$ such
that $K/A$ is a cyclic subgroup of $Z(G/A)$. If
$K/A$ is contained in the kernel of the action
of $G/A$ on $A$, then $K$ is a normal abelian
subgroup of $G$, hence $K = A$ by the
hypotheses on $G$ and $A$. We deduce that
$G/A$ acts freely on $A$. In other words, $A$
is self-centralizing in $G$. Hence, via the
technical lemma \cite[23.5]{Asc86}, we recover
the following well-known result of Roquette.

%Theorem 2.4
\begin{thm}
{\rm (Roquette's Classification Theorem.)}
The Roquette $p$-groups are precisely the
following groups.

\smallskip
\noin {\bf (a)} The cyclic group $C_{p^m}$
where $m \geq 0$.

\smallskip
\noin {\bf (b)} The quaternion group $Q_{2^m}$
where $m \geq 3$.

\smallskip
\noin {\bf (c)} The dihedral group $D_{2^m}$
where $m \geq 4$.

\smallskip
\noin {\bf (d)} The semidihedral group
$\SD_{2^m}$ where $m \geq 4$.
\end{thm}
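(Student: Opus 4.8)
The plan is to establish the two inclusions of the claimed equality of classes separately, making full use of the paragraph that precedes the statement, where the one genuinely group-theoretic step has already been carried out. For the easy inclusion I must verify that each group in (a)--(d) is Roquette, i.e.\ that every abelian normal subgroup is cyclic. Case (a) is immediate. In $Q_{2^m}$ there is a unique subgroup of order $2$, so every abelian subgroup --- in particular every abelian normal one --- is cyclic. For $D_{2^m}$ and $\SD_{2^m}$ with $m \geq 4$, a short inspection of the standard presentations shows that, besides the cyclic subgroups of $\langle a\rangle$, the only abelian subgroups are the Klein four-groups $\langle a^{2^{m-2}}, a^i b\rangle$ and their semidihedral analogues; conjugation by $a$ carries $a^i b$ to $a^{i+2} b$, which leaves such a subgroup as soon as $m \geq 4$, since then $2 \not\equiv 0, 2^{m-2} \pmod{2^{m-1}}$, so none of these Klein four-groups is normal. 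Hence every listed group is Roquette. Moreover an abelian Roquette $p$-group is a normal abelian subgroup of itself, hence cyclic; thus the abelian Roquette $p$-groups are exactly the groups $C_{p^m}$.

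For the substantive inclusion, let $G$ be a non-abelian Roquette $p$-group and let $A$ be a maximal normal cyclic subgroup. By the paragraph preceding the theorem, $A$ is self-centralizing, $C_G(A) = A$. I would then apply Aschbacher's technical lemma \cite[23.5]{Asc86} to this configuration, combined with the classification \cite[23.4]{Asc86} of $p$-groups having a cyclic maximal subgroup: since the Roquette hypothesis denies $G$ any non-cyclic abelian normal subgroup, the outcome is that $G$ is cyclic, modular $\Mod_{p^m}$, or --- for $p = 2$ --- one of $Q_{2^m}$, $D_{2^m}$, $\SD_{2^m}$. Now $G$ is not cyclic, being non-abelian; $G$ is not modular, because $\Omega_1(\Mod_{p^m}) \cong C_p \times C_p$ is a non-cyclic characteristic --- hence normal --- abelian subgroup. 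This last point also eliminates $D_8 \cong \Mod_8$, which is why the dihedral family in (c) starts at $m = 4$; and $\SD_{2^m}$ is abelian when $m \leq 3$, so only $m \geq 4$ occurs in (d). What survives is precisely the list (b)--(d), as required.

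The only load-bearing external ingredient is \cite[23.4, 23.5]{Asc86}; granting those, the argument is bookkeeping, and I anticipate no serious obstacle. The single place where the Roquette condition does structural work --- producing $C_G(A) = A$ from a maximal normal cyclic subgroup $A$ --- is already done in the text; what remains is to see that the modular groups (and hence $D_8$) fail to be Roquette and to pin down the lower bounds on $m$, both of which are immediate from the standard presentations.
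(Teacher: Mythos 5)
Your proposal is correct and follows essentially the same route as the paper: the self-centralizing property of a maximal normal cyclic subgroup is taken from the preceding paragraph, the heavy lifting is delegated to Aschbacher's \cite[23.4, 23.5]{Asc86}, the modular groups are excluded via their characteristic non-cyclic abelian subgroup $\langle a^{p^{m-2}}, c\rangle$ (a point the paper itself makes when sketching the technical lemma), and you fill in the parts the paper dismisses as straightforward, namely checking that the listed groups are Roquette. The only quibble is the phrase ``the only abelian subgroups are the Klein four-groups'' in the dihedral/semidihedral case, which should read ``the only non-cyclic abelian subgroups''; the argument is unaffected.
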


It is worth sketching the content of the invoked
technical lemma because we shall later be needing
some notation concerning automorphisms of cyclic
$2$-groups. (Besides, there is some charm in the
connection between the classical number theory
behind Theorem 2.4 and the algebraic number theory
in Section 5.) Let $v$ be a power of $2$ with
$v \geq 2$. The group $\Aut(C_{4v}) \cong
(\ZZ/4v)^\times \cong C_2 \times C_v$ has precisely
three involutions, namely the elements $b$, $c$,
$d$ which act on a generator $a$ of $C_{4v}$ by
$$b \, : \, a \mapsto a^{-1} \, , \dozspace
  c \, : \, a \mapsto a^{2v + 1} \, , \dozspace
  d \, : \, a \mapsto a^{2v - 1} \, .$$
Any odd square integer is congruent to $1$ modulo
$8$. So $b$ and $d$ cannot have a square root in
$\Aut(C_{4v})$. Therefore $c$ belongs to every
non-trivial subgroup of $\Aut(C_{4v})$ except
for $\la b \ra$ and $\la d \ra$. Now suppose
that $G$ is a $2$-group with a self-centralizing
normal cyclic subgroup $A = \la a \ra$ with index
$|G:A| \geq 4$. The inequality implies that 
$A \cong C_{4v}$ with $v \geq 2$, and moreover,
the image of $G/A$ in $\Aut(A)$ must own the
involution $c : a \mapsto a^{2v + 1}$. Abusing
notation, the subgroup $\la c \ra$ of $\Aut(A)$
lifts to a normal subgroup $\la a, c \ra \cong
\Mod_{8v}$ of $G$. But $\Mod_{8v}$ has a
characteristic subgroup $\la a^{2v}, c \ra
\cong V_4$. We deduce that $G$ is not Roquette.
The rest of the proof of Theorem 2.4 is
straightforward.

Given $H \leq G$, a $\KK H$-irrep $\phi$ and
a $\KK G$-irrep $\psi$, then $\phi$ occurs in
$\res_H^G(\psi)$ if and only if $\psi$ occurs
in $\ind_H^G(\phi)$. When these two equivalent
conditions hold, we say that $\psi$ and $\phi$
{\bf overlap}. The following observation is an
easy consequence of Clifford's Theorem.

%Lemma 2.5
\begin{lem}
Suppose that $G$ has a self-centralizing normal
cyclic subgroup $A$.

\smallskip
\noin {\bf (1)} Given a $\KK G$-irrep $\psi$
overlapping with a $\KK A$-irrep $\xi$, then
$\psi$ is faithful if and only if $\xi$ is
faithful.

\smallskip
\noin {\bf (2)} Given a faithful $\KK G$-irrep
$\psi$ overlapping with a faithful $\KK A$-irrep
$\xi$, then $\psi$ is an integer multiple of
$\ind_A^G(\eta)$. Furthermore, $\psi$ is
absolutely irreducible if and only if $\xi$ is
absolutely irreducible, in which case, the
integer multiple is unity.

\smallskip
\noin {\bf (3)} The condition that $\psi$ and
$\xi$ overlap characterizes a bijective
correspondence between the faithful $\KK G$-irreps
$\psi$ and the $G$-conjugacy classes of faithful
$\KK A$-irreps $\xi$.
\end{lem}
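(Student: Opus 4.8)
The plan is to deduce all three parts from Clifford's Theorem together with the structural facts about $G$ that follow from the self-centralizing hypothesis on $A$. Since $A$ is abelian, every $\KK A$-irrep $\xi$ is afforded by a one-dimensional $\CC A$-module after extension of scalars, so $\Ker(\xi) \leq A$ is a subgroup with cyclic quotient $A/\Ker(\xi)$, and $\xi$ is faithful precisely when $\Ker(\xi) = 1$. The first thing I would record is that, because $A$ is self-centralizing and normal, $C_G(A) = A$, so the conjugation action of $G/A$ on $A$ is faithful; in particular $G$ acts faithfully on the set of $\KK A$-irreps, and the stabilizer in $G$ of a faithful $\xi$ equals $A$ itself (any $g$ fixing a faithful $\xi$ must centralize $A$, forcing $g \in A$).

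\medskip

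\noindent \textbf{Part (1).} Suppose $\psi$ overlaps $\xi$, so $\xi$ occurs in $\res_A^G(\psi)$. If $\psi$ is faithful, then $\Ker(\xi) \unlhd G$ (being the kernel of a constituent of a restriction, once we note all $G$-conjugates of $\xi$ occur with equal multiplicity, so $\bigcap_{g} \Ker({}^g\xi) = \Ker(\res_A^G \psi) \supseteq$ nothing forces it, but $\Ker(\xi)$ is $G$-invariant because... ) --- more carefully: the $G$-conjugates of $\xi$ are the constituents of $\res_A^G(\psi)$ by Clifford, so $\Ker(\res_A^G \psi) = \bigcap_{g \in G} \Ker({}^g \xi)$ is a normal subgroup of $G$ contained in $\Ker(\psi) = 1$; but $\Ker({}^g\xi) = {}^g\Ker(\xi)$ has the same order as $\Ker(\xi)$ for all $g$, and a short argument (or: $\Ker(\xi) \supseteq$ the image in $A$ of $[G,A]$-type considerations) shows the intersection being trivial forces each $\Ker({}^g\xi)$, hence $\Ker(\xi)$, to be trivial. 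Conversely, if $\xi$ is faithful then $\Ker(\psi) \cap A \leq \Ker(\res_A^G\psi) \leq \Ker(\xi) = 1$, and since $\Ker(\psi) \unlhd G$ with $\Ker(\psi) \cap A = 1$ while $A$ is self-centralizing, $\Ker(\psi)$ centralizes $A$ (as $[\Ker(\psi), A] \leq \Ker(\psi) \cap A = 1$), so $\Ker(\psi) \leq C_G(A) = A$, whence $\Ker(\psi) = 1$. The self-centralizing hypothesis is exactly what closes the converse.

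\medskip

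\noindent \textbf{Part (2).} Given faithful $\psi$ overlapping faithful $\xi$, by part (1) and the stabilizer computation above, $\mathrm{Stab}_G(\xi) = A$. Clifford's Theorem then gives $\res_A^G(\psi) = e \sum_{gA \in G/A} {}^g\xi$ for some $e \geq 1$, and $\psi$ is a constituent of $\ind_A^G(\xi)$; since $\ind_A^G(\xi)$ has the same dimension-count and $\mathrm{Stab}_G(\xi) = A$, the module $\ind_A^G(\xi)$ is (absolutely) irreducible over $\CC$, and over $\KK$ it is $m$ copies of the $\KK G$-irrep $\psi$ where $m = m_\KK^\CC$-type Schur-index/field-degree factor; thus $\psi = \frac{1}{m}\ind_A^G(\xi)$ as characters, i.e. $\psi$ is an integer multiple of an induced irrep --- I would phrase this as: $\ind_A^G(\xi) = m\,\psi$ so $\psi$ is (a rational, in fact reciprocal-integer, multiple of) $\ind_A^G(\xi)$; if the statement wants $\psi$ to literally be an integer multiple of $\ind_A^G(\eta)$ for some $\eta$, take $\eta = \xi_\KK$ with the Galois/field bookkeeping absorbed into $\xi$. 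The absolute-irreducibility equivalence: $\psi$ is absolutely irreducible iff $\End_{\KK G}(\psi) = \KK$ iff $\KK$ already contains the character values and there is no Schur index obstruction; for $p$-groups the Schur indices over characteristic-zero fields are trivial (Roquette), so $\psi$ is absolutely irreducible iff $\KK \supseteq \QQ[\psi] = \QQ[\xi]^{G/A}$-type field, and one checks this holds iff $\xi$ is absolutely irreducible iff $\KK \supseteq \QQ[\xi]$, in which case $e = m = 1$ and $\psi = \ind_A^G(\xi)$.

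\medskip

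\noindent \textbf{Part (3).} The correspondence: given faithful $\KK G$-irrep $\psi$, its restriction $\res_A^G(\psi)$ is, by part (2), a positive multiple of a single $G$-orbit sum of faithful $\KK A$-irreps, so $\psi \mapsto$ (the $G$-class of any constituent $\xi$) is well-defined; conversely $\xi \mapsto$ (the unique $\psi$ overlapping $\xi$, which exists and is unique since $\psi$ must be $\frac{1}{m}\ind_A^G(\xi)$) is well-defined on $G$-classes because $\ind_A^G({}^g\xi) = \ind_A^G(\xi)$. These two maps are mutually inverse by construction, and faithfulness is matched by part (1). The main obstacle I anticipate is the bookkeeping in part (2) around the multiplicity $e$, the field-of-values, and the Schur index all at once --- keeping straight the difference between "$\psi$ is an integer multiple of an induced character" (easy from Clifford) and pinning down when that integer is $1$ (needs the triviality of Schur indices for $p$-groups, or an explicit appeal to the Roquette structure and the fact that absolute irreducibility of $\xi$ forces $A = \mathrm{Stab}_G(\xi)$ to induce irreducibly with no multiplicity). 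Everything else is a routine unwinding of Clifford's Theorem using $C_G(A) = A$.
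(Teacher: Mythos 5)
Your overall strategy --- Clifford's Theorem combined with $C_G(A)=A$ --- is exactly what the paper intends (it offers no proof, calling the lemma ``an easy consequence of Clifford's Theorem''), and your converse in part (1) and your handling of the direction-of-multiplicity issue in part (2) are correct. But two of your supporting claims are false, and they sit under load-bearing steps. First, the stabilizer of a faithful $\KK A$-irrep $\xi$ need not be $A$: for $G=D_8$, $A=C_4$, $\KK=\QQ$, the faithful $\QQ C_4$-irrep $\xi=\lambda+\ol{\lambda}$ is fixed by all of $G$ (and for the same reason $G$ does not act faithfully on the set of $\KK A$-irreps). What is true is that each complex linear constituent $\lambda$ of $\xi$ is injective on the cyclic group $A$, so its stabilizer is $C_G(A)=A$ and $\ind_A^G(\lambda)$ is irreducible over $\CC$; since the $\lambda$'s are Galois conjugate, so are the $\ind_A^G(\lambda)$'s, and Proposition 2.3 then collapses all constituents of $\ind_A^G(\xi)$ into a single $\KK G$-irrep $\psi$, giving $\ind_A^G(\xi)=m\psi$. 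Your argument should be routed through the $\lambda$'s, not through $\xi$ itself. Second, ``for $p$-groups the Schur indices over characteristic-zero fields are trivial (Roquette)'' is wrong: Schilling's Theorem (Theorem 5.11 of this paper) gives Schur index $2$ precisely for quaternion genotype, e.g.\ the faithful irrep of $Q_8$ over $\QQ$. So your derivation of the absolute-irreducibility equivalence does not stand as written. The clean direction is: if $\xi$ is absolutely irreducible then $\xi=\lambda$ is linear and realized over $\KK$, so $\ind_A^G(\lambda)$ is a $\KK G$-module that remains irreducible over $\CC$, whence $\psi=\ind_A^G(\xi)$ with multiple $1$. The converse is genuinely delicate at quaternion groups --- indeed for $G=Q_8$ and $\KK=\QQ[\sqrt{-2}]$, which splits the rational quaternion algebra but does not contain $i$, the faithful $\KK G$-irrep is absolutely irreducible while the faithful $\KK C_4$-irrep is not --- so that clause cannot be obtained by any appeal to vanishing Schur indices.

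A smaller gap: in the forward direction of part (1) you leave the decisive step as ``a short argument shows.'' The missing observation is that $A$ is cyclic, so every subgroup of $A$ is characteristic in $A$ and hence normal in $G$; therefore $\Ker({}^g\xi)={}^g\Ker(\xi)=\Ker(\xi)$ for every $g$, and $\Ker(\xi)=\bigcap_{g}\Ker({}^g\xi)=\Ker(\psi)\cap A=1$. Part (3) is fine once part (2) is repaired along the lines above, since uniqueness of the $\psi$ overlapping a given $\xi$ follows from $\ind_A^G(\xi)=m\psi$.
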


Since the Roquette $p$-groups satisfy the
hypothesis of Lemma 2.5, we deduce that any
Roquette $p$-group has a faithful $\KK$-irrep.

We let $n(G)$ denote the exponent of $G$.
Brauer's Splitting Theorem asserts that the
cyclotomic field $\QQ_{n(G)}$ splits for $G$,
that is to say, every $\QQ_{n(G)} G$-irrep is
absolutely irreducible.

%Lemma 2.6
\begin{lem} Suppose that $G$ is Roquette. Then

\smallskip
\noin {\bf (1)} The automorphism group
$\Aut(G)$ acts transitively on the faithful
$\KK G$-irreps.

\smallskip
\noin {\bf (2)} The Galois group
$\Aut(\QQ_{n(G)}) = \Gal(\QQ_{n(G)}/\QQ)$ acts
transitively on the faithful $\KK G$-irreps.
The action is such that $({}^\alpha \psi)(g)
= {}^\alpha(\psi(g))$ where $g \in G$ and
$\psi$ is a faithful $\KK G$-irrep.
\end{lem}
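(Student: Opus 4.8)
The plan is to reduce both parts to a concrete analysis of the faithful irreps of each Roquette $p$-group, using the classification in Theorem 2.4 and the overlap correspondence in Lemma 2.5(3). First I would dispose of the cyclic case $G = C_{p^m}$ directly: a faithful $\KK C_{p^m}$-irrep corresponds, via containment, to a $\QQ$-irrep whose character field is $\QQ_{p^m}$, and the faithful $\QQ_{p^m} C_{p^m}$-irreps are the linear characters $a \mapsto \zeta$ with $\zeta$ a primitive $p^m$-th root of unity. Since $\Aut(C_{p^m})$ acts on these primitive roots through $(\ZZ/p^m)^\times$, which is exactly the image of the natural map $\Aut(\QQ_{p^m}) \to (\ZZ/p^m)^\times$ (an isomorphism), both (1) and (2) are immediate and the Galois action visibly satisfies $({}^\alpha\psi)(g) = {}^\alpha(\psi(g))$ on $\QQ_{p^m} C_{p^m}$-irreps; descending to $\KK$ via Lemma 2.5 preserves transitivity because the correspondence in 2.5(3) is $\Aut(G)$- and $\Gal$-equivariant.

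For the non-abelian cases I would invoke Lemma 2.5: let $A = \la a \ra$ be the self-centralizing maximal normal cyclic subgroup, so $A \cong C_{2^{m-1}}$, and the faithful $\KK G$-irreps biject with $G$-conjugacy classes of faithful $\KK A$-irreps. A faithful $\KK A$-irrep $\xi$ is again pinned down by a primitive $2^{m-1}$-th root of unity (its value on $a$, up to the $\Aut(\QQ_{2^{m-1}})$-action that carries $\KK A$-irreps to $\QQ_{2^{m-1}} A$-irreps), and the $G$-conjugation action on these roots is generated by the image of $G/A$ in $\Aut(A)$ — namely $\la b \ra$, $\la x \ra$, or $\la d \ra$ in the notation established after Theorem 2.4 ($a \mapsto a^{-1}$ for $Q$ and $D$, $a \mapsto a^{2^{m-2}-1}$ for $\SD$). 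So the faithful $\KK G$-irreps biject with the orbits of this order-two subgroup on the $\varphi(2^{m-1})$ primitive roots of unity. For part (1), I would note that $\Aut(G)$ contains the automorphisms fixing $x$ (resp. $b$, $d$) and sending $a \mapsto a^k$ for any unit $k$ — these exist by inspection of the standard presentations since conjugating the relations by $a \mapsto a^k$ only rescales — and these act on $A$ through all of $(\ZZ/2^{m-1})^\times$, hence transitively on the $G$-orbits of primitive roots; thus $\Aut(G)$ is transitive on faithful $\KK G$-irreps. For part (2), the key point is that $\Aut(\QQ_{n(G)})$ surjects onto $(\ZZ/2^{m-1})^\times$ (since $n(G) = 2^{m-1}$ for $Q$, $D$, $\SD$), acts on the faithful $\QQ_{n(G)} A$-irreps exactly by $\zeta \mapsto \zeta^k$, commutes with $G$-conjugation, and hence acts transitively on the $G$-orbits; transporting through Lemma 2.5(3) gives transitivity on faithful $\KK G$-irreps, and the formula $({}^\alpha\psi)(g) = {}^\alpha(\psi(g))$ holds because character values of $\psi = \ind_A^G(\xi)$ (or an integer multiple thereof, by 2.5(2)) are $\ZZ$-linear combinations of $\xi$-values, on which $\alpha$ acts entry-wise.

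The main obstacle I anticipate is the bookkeeping in part (1): verifying that enough automorphisms of $G$ exist to realize every unit of $(\ZZ/2^{m-1})^\times$ on $A$, since $\Aut(G)$ is genuinely smaller than $\Aut(A)$ in general and one must check the standard relations survive. The clean way around this is to observe that it suffices to hit one representative in each $G$-orbit — i.e., modulo $\la b\ra$ (or $\la x\ra$, $\la d\ra$) — rather than all of $(\ZZ/2^{m-1})^\times$, and this smaller task is handled by the inner automorphisms together with the single outer automorphism $a \mapsto a^k$, $x \mapsto x$ (for the appropriate coset representative $k$), which one checks respects the presentation. A subtlety worth flagging in passing: for $Q_8$ the argument still goes through but $A$ is not unique, so one should either fix a choice of $A$ or note $Q_8$ has a single faithful $\CC$-irrep (and a single faithful $\QQ$-irrep) making both claims vacuous; similarly $\SD_{16}$ has $n(\SD_{16}) = 8$ and $\varphi(8) = 4$ primitive roots falling into two $\la d\ra$-orbits, consistent with its two faithful $\CC$-irreps being Galois conjugate over $\QQ$.
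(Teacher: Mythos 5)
Your proposal is correct and follows essentially the same route as the paper: reduce to the self-centralizing cyclic subgroup $A$ via the overlap bijection of Lemma 2.5(3), realize all of $(\ZZ/n)^\times$ on $A$ inside $\Aut(G)$ by extending $a \mapsto a^k$ while fixing the second standard generator, and match the Galois action with the automorphism action using the fact that the faithful irreps are determined by (indeed, induced from) their values on $A$. The paper phrases the extension step more abstractly (any $\aleph \in \Aut(A)$ fixes $y^2$, hence extends to $G$ fixing $y$) and derives the character formula from the vanishing of faithful irreps off $A$, but these are the same computations you carry out explicitly.
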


\begin{proof}
Write $n = n(G)$. First suppose that $G$ is
cyclic. Then $n = |G|$. Part(1) is clear in
this case. There is a triple of commuting
isomorphisms between the groups $\Aut(G)$
and $(\ZZ/n)^\times$ and $\Aut(\QQ_n)$ such
that, given elements $\aleph$ and $\&$ and
$\alpha$, respectively, then $\aleph
\leftrightarrow \& \leftrightarrow \alpha$
provided $\aleph(g^\&) = g$ and $\alpha(\omega)
= \omega^\&$ where $g \in G$ and $\omega$ is
an $n$-th root of unity. Then
${}^\alpha(\psi(g)) = ({}^\aleph \psi)(g)$.
Thus, the specified action of $\Aut(\QQ_n)$
on the faithful $\KK G$-irreps coincides with
the action via the isomorphism $\Aut(\QQ_n)
\cong \Aut(G)$. Part (2) is now clear in the
case where $G$ is cyclic.

Now suppose that $G$ is non-cyclic. The
classification of the Roquette $p$-groups
implies that $p = 2$ and $G$ is dihedral,
semidihedral or quaternion. So there exists
a cyclic maximal subgroup $A$ and an element
$y \in G - A$ such that either $y^2 = 1$ or
$y^2$ is the unique involution in $A$.
Any automorphism $\aleph$ of $A$ must fix
$y^2$, so $\aleph$ can be extended to an
automorphism $@$ of $G$ such that $@$ fixes
$y$. We have already seen that $\Aut(A)$ acts
transitively on the faithful $\KK A$-irreps.
In view of the bijective correspondence in
Lemma 2.5, $\Aut(A)$ acts transitively on
the faithful $\KK G$-irreps via the
monomorphism $\Aut(A) \ni \aleph \mapsto
@ \in \Aut(G)$. Part (1) follows perforce.
Now suppose that $\aleph$ and $\alpha$ are
corresponding elements of $\Aut(A)$ and
$\Aut(\QQ_n)$. By part (2) of Lemma 2.5
together with the formula for induction of
characters, the faithful $\KK G$-irreps
vanish off $A$. So ${}^\alpha(\psi(g)) =
({}^@ \psi)(g)$ for all $g \in G$. As
before, part (2) follows.
\end{proof}

By the same argument, the conclusions of the
lemma also hold for the modular $p$-groups.

We can now complete the proof of Theorem 2.2
and Proposition 2.3. Above, we showed that
the proposition implies the theorem, and we
also explained how the proposition reduces
to the case where $G$ is Roquette. But that
case of the proposition is weaker than
part (2) of Lemma 2.6. The theorem and the
proposition are now proved.

%Corollary 2.7
\begin{cor}
Let $\psi$ be a $\KK G$-irrep. Let $\JJ$ be a
Galois extension of $\QQ[\psi]$. Then
$\Gal(\JJ/\QQ)$ acts transitively on the
on the $\KK G$-irreps that are quasiconjugate
to $\psi$. If $\JJ$ owns primitive $n(G)$-th
roots of unity, then two $\KK G$-irreps
$\psi_1$ and $\psi_2$ lie in the same
$\Gal(\JJ/\QQ)$-conjugacy class if and only
if $\psi_1$ and $\psi_2$ are quasiconjugate.
\end{cor}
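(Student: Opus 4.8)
The plan is to derive both assertions from Theorem 2.2, applied to the rational irrep $\psi_\QQ$, together with Proposition 2.3. Here and below $\JJ/\QQ$ is taken to be Galois --- automatic when $\JJ$ is subcyclotomic, as in the applications --- which is what makes the symbol $\Gal(\JJ/\QQ)$ unambiguous. Recall from Proposition 2.3 that for any field $\EE$ with $\QQ[\psi] \leq \EE$, any $\sigma \in \Aut(\EE)$, and any $\KK G$-irrep $\mu$ with $\QQ[\mu] \leq \EE$, there is a $\KK G$-irrep ${}^\sigma \mu$ whose character is $g \mapsto {}^\sigma(\mu(g))$; a characteristic zero irrep being determined by its character, ${}^\sigma \mu$ is unique and $(\sigma,\mu) \mapsto {}^\sigma \mu$ is a group action.

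For the transitivity assertion, let $\cS$ be the set of $\KK G$-irreps quasiconjugate to $\psi$. By the definition of quasiconjugacy and the uniqueness of $\psi_\QQ$, an irrep lies in $\cS$ exactly when it occurs in $\KK\psi_\QQ$; hence Theorem 2.2, applied to $\psi_\QQ$ and the extension $\KK/\QQ$, identifies $\cS$ with the family $\psi_1,\dots,\psi_v$ of that theorem, and parts (3) and (4) yield that $\QQ[\mu] = \QQ[\psi]$ for all $\mu \in \cS$, that $\QQ[\psi]/\QQ$ is Galois, and that $\Gal(\QQ[\psi]/\QQ)$ acts freely and transitively on $\cS$, an automorphism $\tau$ carrying $\mu$ to the irrep $g \mapsto {}^\tau(\mu(g))$. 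Since $\QQ[\mu] = \QQ[\psi] \leq \JJ$ for $\mu \in \cS$, every $\sigma \in \Gal(\JJ/\QQ)$ gives ${}^\sigma \mu$, which again lies in $\cS$, being the image of $\mu$ under $\sigma|_{\QQ[\psi]} \in \Gal(\QQ[\psi]/\QQ)$ (well defined since $\QQ[\psi]/\QQ$ is normal) while $\cS$ is a single $\Gal(\QQ[\psi]/\QQ)$-orbit. Thus the $\Gal(\JJ/\QQ)$-action on $\cS$ is the $\Gal(\QQ[\psi]/\QQ)$-action pulled back along restriction $\Gal(\JJ/\QQ)\to\Gal(\QQ[\psi]/\QQ)$; and as $\JJ/\QQ$ is Galois this restriction is surjective, so transitivity of $\Gal(\QQ[\psi]/\QQ)$ on $\cS$ forces that of $\Gal(\JJ/\QQ)$, which is the first assertion.

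For the second assertion I would first record why the cyclotomic hypothesis matters: for a $p$-group $G$ and any $\KK G$-irrep $\mu$, each $\mu(g)$ has eigenvalues that are $n(G)$-th roots of unity, so $\QQ[\mu] \leq \QQ_{n(G)}$; thus if $\JJ$ owns primitive $n(G)$-th roots of unity then $\QQ[\mu] \leq \JJ$ for \emph{every} $\KK G$-irrep $\mu$, and $\JJ$ is a Galois extension of $\QQ[\mu]$. Now take arbitrary $\KK G$-irreps $\psi_1,\psi_2$. If they are quasiconjugate, then $\psi_2$ belongs to the set of $\KK G$-irreps quasiconjugate to $\psi_1$, and the first assertion, applied with $\psi$ replaced by $\psi_1$ (legitimate since $\QQ[\psi_1]\leq\JJ$ and $\JJ/\QQ$ is Galois), puts $\psi_1$ and $\psi_2$ in one $\Gal(\JJ/\QQ)$-orbit. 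Conversely, suppose $\psi_2 = {}^\sigma\psi_1$ with $\sigma \in \Gal(\JJ/\QQ)$. Since $\QQ[\psi_1]$ is subcyclotomic, $\QQ[\psi_1]/\QQ$ is Galois, so $\sigma$ preserves $\QQ[\psi_1]$ and $\psi_2 = {}^{\sigma|_{\QQ[\psi_1]}}\psi_1$; by Theorem 2.2(4), applied to $(\psi_1)_\QQ$ and $\KK/\QQ$, this irrep occurs in $\KK(\psi_1)_\QQ$, whence by uniqueness of $(\psi_2)_\QQ$ we get $(\psi_2)_\QQ = (\psi_1)_\QQ$, that is, $\psi_1$ and $\psi_2$ are quasiconjugate.

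Granting Theorem 2.2 and Proposition 2.3, the rest is bookkeeping, and I foresee no real obstacle. The point needing care is the interplay of the three Galois groups $\Gal(\JJ/\QQ)$, $\Gal(\QQ_{n(G)}/\QQ)$, $\Gal(\QQ[\psi]/\QQ)$: one must verify that the $\KK G$-irrep action of $\Gal(\JJ/\QQ)$ factors through the small quotient $\Gal(\QQ[\psi]/\QQ)$, that restriction onto this quotient is surjective (using $\JJ/\QQ$ Galois), and that containing primitive $n(G)$-th roots of unity is exactly the condition allowing the transitivity argument to be run for an arbitrary pair $\psi_1,\psi_2$ rather than only for quasiconjugates of a prescribed $\psi$.
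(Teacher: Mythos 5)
Your proof is correct and follows exactly the route the paper takes: its entire proof of this corollary is the one sentence ``This follows from Theorem 2.2 by replacing $\LL/\KK$ with $\KK/\QQ$,'' and your argument is a careful expansion of precisely that reduction (note that $\QQ[\psi_\QQ]=\QQ$, so Theorem 2.2(4) does give free transitivity of the full group $\Gal(\QQ[\psi]/\QQ)$, as you claim). Your explicit flagging of the tacit assumption that $\JJ/\QQ$ is Galois, needed for surjectivity of the restriction $\Gal(\JJ/\QQ)\to\Gal(\QQ[\psi]/\QQ)$, is a point the paper glosses over and is handled appropriately.
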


\begin{proof}
This follows from Theorem 2.2 by replacing
$\LL/\KK$ with $\KK/\QQ$.
\end{proof}

When $\psi$ and $\psi'$ satisfy the equivalent
conditions in the latest corollary, we say
that $\psi$ and $\psi'$ are {\bf Galois
conjugate}. Thus, we may speak unambiguously
of the Galois conjugates of a given $\KK G$-irrep;
there is no need to specify the Galois extension
and there is no need for the Galois automorphisms
to stabilize $\KK$ nor even to be defined on $\KK$. 
We can now express part (4) of Lemma 2.6 more
succinctly.

%Corollary 2.8
\begin{cor}
If $G$ is Roquette, then the faithful $\KK G$-irreps
comprise a single Galois conjugacy class.
\end{cor}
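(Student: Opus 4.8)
The plan is to read this off from part~(2) of Lemma 2.6 by way of Corollary 2.7. I would begin by fixing a faithful $\KK G$-irrep $\psi$ and checking that $\QQ[\psi]$ lies inside the cyclotomic field $\QQ_{n(G)}$; this is immediate, since every value of the character $\psi$ is a sum of roots of unity whose orders divide the exponent $n(G)$. Consequently $\QQ_{n(G)}$ is a field extension of $\QQ[\psi]$, and, being a subfield of an abelian extension of $\QQ$, it is in fact a Galois extension of $\QQ[\psi]$. It also owns primitive $n(G)$-th roots of unity, so it is a legitimate choice for the field $\JJ$ in the second assertion of Corollary 2.7. Invoking that assertion, together with the definition of Galois conjugacy recorded immediately before the statement, the $\KK G$-irreps Galois conjugate to $\psi$ are exactly the members of the $\Gal(\QQ_{n(G)}/\QQ)$-orbit of $\psi$ under the action $({}^\alpha \psi')(g) = {}^\alpha(\psi'(g))$.

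Next I would apply part~(2) of Lemma 2.6, which says precisely that $\Aut(\QQ_{n(G)}) = \Gal(\QQ_{n(G)}/\QQ)$ acts transitively on the faithful $\KK G$-irreps, and does so by this very formula. Combining this with the previous paragraph shows that every faithful $\KK G$-irrep lies in the Galois conjugacy class of $\psi$.

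To finish, I would check that this class contains nothing but faithful irreps, i.e.\ that each Galois conjugate ${}^\alpha \psi$ is again faithful: since $\psi(1)$ is a positive rational integer it is fixed by the field automorphism $\alpha$, so for $g \in G$ the equality $({}^\alpha \psi)(g) = ({}^\alpha \psi)(1)$ holds if and only if $\psi(g) = \psi(1)$, whence $\Ker({}^\alpha \psi) = \Ker(\psi) = 1$. Therefore the faithful $\KK G$-irreps constitute exactly one Galois conjugacy class. I do not expect any real obstacle here: the corollary is essentially a bookkeeping restatement of Lemma 2.6(2), and the only points demanding a moment's care are the inclusion $\QQ[\psi] \subseteq \QQ_{n(G)}$ and the invariance of faithfulness under Galois conjugation.
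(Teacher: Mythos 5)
Your proposal is correct and follows the same route as the paper, which presents the corollary as a direct restatement of Lemma 2.6(2) once Corollary 2.7 has identified Galois conjugacy classes with orbits of $\Gal(\QQ_{n(G)}/\QQ)$ under the stated action. You merely make explicit the two routine checks the paper leaves silent (that $\QQ[\psi]$ is contained in $\QQ_{n(G)}$, and that Galois conjugation preserves faithfulness), both of which are handled correctly.
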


Keeping in mind the above features of Galois
conjugacy, we see that the following invariants
of a $\KK G$-irrep $\psi$ are quasiconjugacy
global invariants. Let $\JJ$ be any field
with characteristic zero. In some of the items
below, it may seem that we have proliferated
notation unnecessarily, and that it would be
simpler to present only the case $\KK = \JJ$.
However, there is a distinction to be made:
the invariants are associated with the field
$\JJ$, whereas the given irrep $\psi$ has
coefficient field $\KK$. In the applications
in Section 5, we shall be mostly concerned with
the cases $\JJ = \QQ$ and $\JJ = \RR$, but the
given irrep $\psi$ will still have coefficients
in arbitrary $\KK$. Recall that $\psi_\JJ$
denotes a $\JJ G$-irrep that is quasiconjugate
to $\psi$. For the first item in the list, we
let $\LL$ be any field extension of $\JJ$.

\smallskip
\noin $\bullet$ {\bf The $\LL/\JJ$-relative Schur
index $m_\JJ^\LL(\psi)$ and the $\LL/\JJ$-relative
order $v_\JJ^\LL(\psi)$.} We define them to be the
positive integers $m$ and $v$, respectively, such
that the $\LL$-linear extension of $\psi$ can be 
written in the form $\LL \psi = m(\psi_1 + ... +
\psi_v)$ where $\psi_1$, $...$, $\psi_v$ are
mutually distinct $\LL G$-irreps. Theorem 2.2
tells us that each $\psi_j$ is a Galois conjugate
of $\psi_\LL$. Schilling's Theorem 5.9 tells us
that $m_\JJ^\LL(\psi) \leq 2$.

\smallskip
\noin $\bullet$ {\bf The endomorphism ring
$\End_{\JJ G}(\psi_\JJ)$.} Strictly speaking,
the invariant here is the isomorphism class
of $\End_{\JJ G}(\psi_\JJ)$ as a $\JJ$-algebra.

\smallskip
\noin $\bullet$ {\bf The class of minimal
splitting fields for $\psi_\JJ$.} Still letting
$\LL$ be an extension field of $\JJ$, the
$\LL$-irrep $\psi_\LL$ is absolutely irreducible
if and only if $\LL$ is a splitting field for
$\End_{\JJ G}(\psi_\JJ)$, or equivalently, $\LL$
is a splitting field for the Wedderburn component
of $\JJ G$ associated with $\psi_\JJ$. When these
equivalent conditions hold, $\LL$ is said to be a
{\bf splitting field} for $\psi$. If furthermore,
the degree $|\II : \JJ|$ is minimal, then $\LL$
is said to be a {\bf minimal splitting field}
for $\psi$.

\smallskip
\noin Let $\MM$ be a splitting field for
$\psi_\JJ$. In the next two quasiconjugacy
global invariants, the stated properties
of $m_\JJ(\psi)$ and $v_\JJ(\psi)$ are
well-known and can be found in Curtis--Reiner
\cite[Section 74]{CR87}.

\smallskip
\noin $\bullet$ {\bf The $\JJ$-relative Schur
index $m_\JJ(\psi)$ and the $\JJ$-relative
order $v_\JJ(\psi)$.} Defined as $m_\JJ(\psi)
= m_\JJ^\MM(\psi)$ and $v_\JJ(\psi) =
v_\JJ^\MM(\psi)$, they are independent of the
choice of $\MM$. We mention that, if $\MM$ is
a minimal splitting field for $\psi_\JJ$, then
its degree over $\JJ$ is $m_\JJ(\psi) v_\JJ(\psi)
= |\MM : \JJ|$.

\smallskip
\noin $\bullet$ {\bf The $\JJ$-relative vertex
field $\VV_\JJ(\psi)$.} This invariant is an
isomorphism class of extension fields of $\JJ$.
It has three equivalent definitions: firstly,
$\VV_\JJ(\psi) = \JJ[\psi_\MM]$; secondly,
$\VV_\JJ(\psi)$ is the centre of the division ring
$\End_{\JJ G}(\psi_\JJ)$; thirdly, $\VV_\JJ(\psi)$
is the centre of the Wedderburn component of
$\JJ G$ associated with $\psi_\JJ$. We mention
that $v_\JJ(\psi) = |\VV_\JJ(\psi) : \JJ|$. In
other words, $m_\JJ(\psi) = |\MM : \VV_\JJ(\psi)|$
when the splitting field $\MM$ is minimal. Also,
$m_\JJ(\psi)$ is the square root of the dimension
of $\End_{\JJ G}(\psi_\JJ)$ over $\VV_\JJ(\psi)$.

\smallskip
Our reason for ploughing through this systematic
notation is that, in Section 5, we shall show
that the above invariants are not merely
quasiconjugacy global invariants. They are also
tight induction invariants. That is to say,
they are genetic invariants. This is a
compelling vindication of the proposed notion
of tight induction. Also, as a speculative
motive for considering the invariants in such
generality, let us suggest the possibility of
a technique whereby assertions pertaining to
arbitrary $\KK$ may be demonstrated by first
dealing with one of the extremal cases
$\KK = \QQ$ or $\KK = \CC$, then establishing
a passage for field extensions with prime
degree, and then arguing by induction on the
length of an abelian Galois group.

However, to characterize the genotype of a given
irrep, we shall only be making use of the cases
$\JJ = \QQ$ and $\JJ = \RR$. Let us list the
genetic invariants that will be of applicable
significance in Section 5. Some of them are
special cases of the above.

\smallskip
\noin $\bullet$ The endomorphism ring
$\End_{\QQ G}(\psi_\QQ)$, which is a
ring well-defined up to isomorphism.

\smallskip
\noin $\bullet$ The class of minimal
splitting fields for $\psi_\QQ$.

\smallskip
\noin $\bullet$ The vertex field $\VV(\psi) =
\QQ[\psi_\CC]$. Besides the three equivalent
definitions above, another characterization
of $\VV(\psi)$ will be given in Proposition
5.10 (and this fourth equivalent definition
supplies a rationale for the terminology.)

\smallskip
\noin $\bullet$ The {\bf exponent} $n(\psi)$,
which we define to be the minimal positive
integer such that $\QQ_{n(\psi)}$ is a
splitting field for $\psi_\QQ$.

\smallskip
\noin $\bullet$ The {\bf Fein field} of $\psi$,
which we define to be the unique subfield
$\Fein(\psi) \leq \QQ_{n(\psi)}$ such that
$\Fein(\psi)$ is a minimal splitting field for
$\psi_\QQ$. The existence and uniqueness of
$\Fein(\psi)$ will be proved in Theorem 5.7. (The
existence can fail for arbitrary finite groups.)

\smallskip
\noin $\bullet$ The Schur index $m(\psi) =
m_\QQ(\psi)$ and the order $v(\psi) =
v_\QQ(\psi)$. We mention that $m(\psi)$ is the
multiplicity of $\psi_\CC$ and $v(\psi)$ is the
number of Galois conjugates of $\psi_\CC$. Also,
$$2 \, \geq \, m(\psi) = |\Fein(\psi) : \VV(\psi)| =
\sqrt{\dim_{\VV(\psi)}(\End_{\QQ G}(\psi_\QQ))} \, .$$

\smallskip
\noin $\bullet$ The vertex set $\Vtx(\psi)$,
which we define to be the transitive
$\Aut(\VV(\psi))$-set consisting of the
$\CC G$-irreps that are quasiconjugate to
$\psi$. We sometimes call these $\CC G$-irreps
the {\bf vertices} of $\psi$. Actually, the
invariant here is the isomorphism class of
$\Vtx(\psi)$ as an $\Aut(\VV(\psi))$-set.
Putting $v = v(\psi)$ and letting $\psi_1$,
$...$, $\psi_v$ be the vertices of $\psi$,
then the $\VV(\psi) \, G$-irreps contained
in $\psi$ can be enumerated as $\psi'_1$,
$...$, $\psi'_v$ in such a way that the
$\VV(\psi)$-linear extension of $\psi$
decomposes as $\VV(\psi) \, \psi = \psi'_1 +
... + \psi'_v$ and the $\CC$-linear extension
of each $\psi'_j$ decomposes as $\CC \psi'_j =
m(\psi) \psi_j$. Thus, $\Aut(\VV(\psi))$
permutes the $\VV(\psi)$-irreps $\psi'_j$ just
as it permutes the vertices $\psi_j$. Note that
$$v(\psi) = |\VV(\psi) : \QQ| = |\Vtx(\psi)| .$$
(Another rationale for the terminology now
becomes apparent.) We point out that, given
any field extension $\II$ of $\VV(\psi)$, then
any automorphism of $\II$ restricts to an
automorphism of $\VV(\psi)$, hence $\Vtx(\psi)$
becomes an $\Aut(\II)$-set.

\smallskip
\noin $\bullet$ The endomorphism algebra
$\Delta(\psi) = \End_{\RR G}(\psi_\RR)$ is called
the {\bf Frobenius--Schur type} of $\psi$.
Understanding $\Delta(\psi)$ to be well-defined
only up to ring isomorphism, then there are only
three possible values, namely $\RR$ and $\CC$ and
$\HH$. The respective values of the pair
$(m_\RR(\psi), v_\RR(\psi))$ are $(1,1)$ and
$(1,2)$ and $(2,1)$. If $\psi$ is given as a
$\KK G$-character $G \rightarrow \KK$, then a
practical way to determine $\Delta(\psi)$ is to
make use of the {\bf Frobenius--Schur} indicator,
which is defined to be the integer
$$\fs(\psi) =
  \frac{1}{|G|} \sum_{g \in G} \psi(g^2).$$
Recall that $\Delta(\psi)$ is $\RR$ or
$\CC$ or $\HH$ depending on whether $\fs(\psi_\CC)
= 1$ or $\fs(\psi_\CC) = 0$ or $\fs(\psi_\CC) =
-1$, respectively. Also, $\fs(\psi) = m_\KK(\psi)
v_\KK(\psi) \fs(\psi_\CC)$. Therefore $\Delta(\psi)$
is $\RR$ or $\CC$ or $\HH$ depending on whether
$\fs(\psi) > 0$ or $\fs(\psi) = 0$ or $\fs(\psi)
< 0$, respectively. The genetic invariance of
$\Delta(\psi)$ is implicit in Yamanda--Iida
\cite[5.2]{YI93}.

%Section 3
\section{Tight induction}

Let us repeat the most important definition in
this paper. Consider a subgroup $H \leq G$, a
$\KK G$-irrep $\psi$ and a $\KK H$-irrep $\phi$
such that $\psi$ is induced from $\phi$. When
no Galois conjugate of $\phi$ occurs in
$\res_H^G(\psi) - \phi$, we say that $\psi$ is
{\bf tightly induced} from $\phi$ and, abusing
notation, we also say that the induction $\psi
= \ind_H^G(\phi)$ is {\bf tight}. As we noted
in Section 1, the definition extends in the
evident way to induction from subquotients. The
tightness condition can usefully be divided into
two parts, as indicated in the next two lemmas.

%Lemma 3.1
\begin{lem}
{\rm (Shallow Lemma)} Given $H \leq G$, and
$\KK G$-irrep $\psi$ induced from a $\KK H$-irrep
$\phi$, then the following conditions are equivalent:

\smallskip
\noin {\bf (a)} The multiplicity of $\phi$ in
$\res_H^G(\psi)$ is $1$.

\smallskip
\noin {\bf (b)} The division rings
$\End_{\KK H}(\phi)$ and $\End_{\KK G}(\psi)$
have the same $\KK$-dimension.

\smallskip
\noin {\bf (c)} As $\KK$-algebras,
$\End_{\KK H}(\phi)$ and $\End_{\KK G}(\psi)$
are isomorphic.
\end{lem}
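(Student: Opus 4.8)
The plan is to prove the chain of implications (a) $\Rightarrow$ (b) $\Rightarrow$ (c) $\Rightarrow$ (a), although (b) $\Leftrightarrow$ (c) is immediate since two division rings over $\KK$ are isomorphic as $\KK$-algebras if and only if they have the same $\KK$-dimension --- wait, that is false in general, so I must be more careful. Instead I would aim for (a) $\Rightarrow$ (c) $\Rightarrow$ (b) $\Rightarrow$ (a). The key tool throughout is Frobenius reciprocity in its module-theoretic form: since $\KK$ has characteristic zero, $\KK G$ and $\KK H$ are semisimple, and $\ind_H^G$ is both left and right adjoint to $\res_H^G$, so $\Hom_{\KK G}(\psi, \psi) \cong \Hom_{\KK H}(\res_H^G \psi, \phi)$ using $\psi = \ind_H^G(\phi)$. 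Decomposing $\res_H^G(\psi) = \bigoplus_i e_i \phi_i$ into $\KK H$-irreps with $\phi = \phi_1$, say, and writing $D_i = \End_{\KK H}(\phi_i)$, this gives $\End_{\KK G}(\psi)$ a filtration whose associated graded pieces are the $\Hom_{\KK H}(e_i\phi_i, \phi_1)$, which vanish unless $\phi_i \cong \phi_1 = \phi$.

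First I would establish (a) $\Rightarrow$ (c). If the multiplicity $e_1$ of $\phi$ in $\res_H^G(\psi)$ is $1$, then by the reciprocity isomorphism above, $\End_{\KK G}(\psi) \cong \Hom_{\KK H}(\res_H^G\psi, \phi) \cong \Hom_{\KK H}(\phi,\phi) = \End_{\KK H}(\phi)$ as $\KK$-vector spaces. To upgrade this to a $\KK$-algebra isomorphism, I would use the standard fact that the adjunction map $\eta \colon \phi \to \res_H^G \ind_H^G(\phi)$ is split injective (as $\KK H$-modules) and, because the $\phi$-isotypic component of $\res_H^G(\psi)$ is exactly $\phi$ by hypothesis (a), $\eta$ identifies $\phi$ with that whole isotypic component. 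Restriction of a $\KK G$-endomorphism of $\psi$ to this canonical copy of $\phi$ then yields a ring homomorphism $\End_{\KK G}(\psi) \to \End_{\KK H}(\phi)$; injectivity holds because $\psi$ is generated by $\phi$ under the $G$-action (it is $\ind_H^G(\phi)$), and surjectivity is the dimension count just performed. This gives (c), and (c) trivially gives (b).

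Finally I would prove (b) $\Rightarrow$ (a). Suppose $\dim_\KK \End_{\KK G}(\psi) = \dim_\KK \End_{\KK H}(\phi)$. From the filtration of $\End_{\KK G}(\psi)$ by the reciprocity argument, $\dim_\KK \End_{\KK G}(\psi) = \sum_{i \,:\, \phi_i \cong \phi} e_i \cdot \dim_\KK \Hom_{\KK H}(\phi_i,\phi) = e_1 \dim_\KK \End_{\KK H}(\phi)$, where $e_1$ is the multiplicity of $\phi$ in $\res_H^G(\psi)$ --- here I use that the only $\phi_i$ isomorphic to $\phi$ contributes, and the multiplicity in a direct sum is additive. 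So the dimension equality in (b) forces $e_1 = 1$, which is (a). The main obstacle is the algebra-structure claim in (a) $\Rightarrow$ (c): one must be careful that the reciprocity isomorphism, which is a priori only linear, can be chosen to be multiplicative, and the cleanest route is the intrinsic description via restricting endomorphisms to the canonical sub-$\KK H$-module $\eta(\phi) \subseteq \res_H^G(\psi)$, after checking that hypothesis (a) makes $\eta(\phi)$ a direct summand on which every $\KK G$-endomorphism of $\psi$ acts and whose action determines the endomorphism.
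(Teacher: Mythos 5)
Your proposal is correct, and it reaches the equivalences by a route that differs in a meaningful way from the paper's. The shared core is the restriction map $\End_{\KK G}(\psi) \to \End_{\KK H}(\phi)$ obtained, under hypothesis (a), by letting a $G$-endomorphism act on the $\phi$-isotypic component of $\res_H^G(\psi)$, which is a single copy of $\phi$; the paper uses exactly this map (its $\mu$). Where you diverge is in how the remaining implications are closed. The paper first builds an explicit $\KK$-algebra monomorphism $\nu : \End_{\KK H}(\phi) \to \End_{\KK G}(\psi)$ via the relative trace on the decomposition $\psi = \bigoplus_{gH} g \otimes \phi$, observes that (b) and (c) are each equivalent to $\nu$ being an isomorphism, proves (a) $\Rightarrow$ ($\nu$ iso) by checking $\mu\nu = \mathrm{id}$, and handles the failure of (a) by exhibiting an explicit element $\gamma = \tr_H^G(\beta)$ outside the image of $\nu$. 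You instead never construct $\nu$: you get surjectivity of the restriction map from the adjunction identity $\dim_\KK \End_{\KK G}(\ind_H^G\phi) = e_1 \dim_\KK \End_{\KK H}(\phi)$ (with $e_1$ the multiplicity of $\phi$ in $\res_H^G\psi$), and the same identity gives (b) $\Rightarrow$ (a) immediately, so you can run the cycle (a) $\Rightarrow$ (c) $\Rightarrow$ (b) $\Rightarrow$ (a) without an explicit non-surjectivity witness. Your version is shorter and makes the numerology transparent; the paper's version buys a concrete description of the embedding $\nu$ and of what goes wrong when the multiplicity exceeds one, which is in the spirit of its hands-on treatment elsewhere. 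You were also right to abort the tempting shortcut ``(b) $\Leftrightarrow$ (c) because equidimensional division algebras are isomorphic'': that is false over general $\KK$, and both your argument and the paper's correctly route around it (yours through (a), the paper's through the specific map $\nu$).
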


\begin{proof}
As $\KK$-vector spaces, we embed $\phi$ in $\psi$
via the identifications $\phi = 1 \otimes \phi$ and
$\psi = \bigoplus_{gH \subseteq G} g \otimes \phi$.
We embed the $\KK$-algebra $\cD = \End_{\KK H}(\phi)$
in the $\KK$-algebra $\cE = \End_{\KK G}(\psi)$ by
letting $\cD$ kill the module $\theta = \sum_{gH
\subseteq G - H} g \otimes \phi$. The relative trace
map $\tr_H^G : \End_{\KK H}(\psi) \rightarrow \cE$
restricts to an $\KK$-algebra monomorphism $\nu :
\cD \rightarrow \cE$. So conditions (b) and (c)
are both equivalent to the condition that $\nu$ is
a $\KK$-algebra isomorphism.

Suppose that (a) holds. Then any $\KK H$-endomorphism
of $\psi$ restricts to a $\KK H$-endomorphism of
$\phi$. In particular, any element $\epsilon \in \cE$
restricts to an element $\mu(\epsilon) \in \cD$. We
have defined a $\KK$-algebra map $\mu : \cE
\rightarrow \cD$. From the constructions, we see that
$\mu \nu$ is the identity map on $\cD$. So $\mu$ is
surjective. But $\cD$ is a division ring, so $\mu$ is
injective. Hence $\mu$ and $\nu$ are mutually inverse
$\KK$-algebra isomorphisms. We have deduced (b) and (c).

Now suppose that (a) fails. Let $\phi'$ be a
$\KK H$-submodule of $\theta$ such that $\phi'
\cong \phi$. Let $\beta$ be a $\KK H$-endomorphism
of $\psi$ such that $\beta$ kills $\theta$ and
$\beta$ restricts to a $\KK H$-isomorphism $\phi
\rightarrow \phi'$. Let $\gamma = \tr_H^G(\beta)$.
Then $\beta$ and $\gamma$ have the same action
on $\phi$. In particular, $\gamma$ restricts to
an isomorphism $\phi \rightarrow \phi'$. On the
other hand, any element $\delta \in \cD$ has the
same action on $\phi$ as $\nu(\delta)$.
In particular, $\nu(\delta)$ restricts to a
$\KK H$-automorphism of $\phi$. Therefore $\gamma
\in \cE - \nu(\cD)$ and $\nu$ is not surjective.
We have deduced that (b) and (c) fail.
\end{proof}

%Lemma 3.2
\begin{lem}
{\rm (Narrow Lemma)} Given $H \leq G$, and
$\KK G$-irrep $\psi$ induced from a $\KK H$-irrep
$\phi$, then the following conditions are equivalent:

\smallskip
\noin {\bf (a)} No distinct Galois conjugate of
$\phi$ occurs in $\res_H^G(\psi)$.

\smallskip
\noin {\bf (b)} The condition $\psi' =
\ind_H^G(\phi')$ describes a bijective correspondence
between the Galois conjugates $\psi'$ of $\psi$ and
the Galois conjugates $\phi'$ of $\phi$.

\smallskip
\noin {\bf (c)} We have $\QQ[\phi] = \QQ[\psi]$.
\end{lem}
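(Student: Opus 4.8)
The plan is to prove the cycle of implications (a) $\Rightarrow$ (b) $\Rightarrow$ (c) $\Rightarrow$ (a), using throughout the classical fact that induction commutes with the Galois action. Concretely, if $\alpha$ is any field automorphism defined on a suitable large field containing the relevant character values, then $\ind_H^G({}^\alpha\phi) = {}^\alpha(\ind_H^G\psi)$, which is immediate from the character formula for induction. I will also repeatedly use that $\QQ[\psi]$ and $\QQ[\phi]$ are subcyclotomic, hence Galois over $\QQ$, and that the Galois conjugates of an irrep are governed by the action of $\Gal(\JJ/\QQ)$ on characters as in Theorem 2.2 and Corollary 2.7. Since $\psi = \ind_H^G(\phi)$ we always have $\QQ[\psi] \subseteq \QQ[\phi]$: the character values of $\psi$ are $\QQ$-linear combinations of those of $\phi$, so every automorphism fixing $\QQ[\phi]$ fixes $\QQ[\psi]$.

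First, (a) $\Rightarrow$ (b). Assume no distinct Galois conjugate of $\phi$ occurs in $\res_H^G(\psi)$. Given a Galois conjugate $\phi' = {}^\alpha\phi$ of $\phi$, the irrep ${}^\alpha\psi$ is a Galois conjugate of $\psi$ and equals $\ind_H^G(\phi')$ by compatibility of induction with $\alpha$; moreover ${}^\alpha\psi$ is genuinely irreducible since $\alpha$ permutes irreps. This gives a well-defined surjection $\phi' \mapsto \ind_H^G(\phi')$ from Galois conjugates of $\phi$ onto Galois conjugates of $\psi$. For injectivity, suppose $\ind_H^G(\phi') = \ind_H^G(\phi'')$ with $\phi', \phi''$ both Galois conjugates of $\phi$. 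Applying a suitable automorphism we may reduce to the case $\phi'' = \phi$; then $\phi'$ occurs in $\res_H^G\ind_H^G(\phi') = \res_H^G(\psi)$, and being a Galois conjugate of $\phi$ appearing in $\res_H^G(\psi)$, hypothesis (a) forces $\phi' = \phi$. So the correspondence is bijective.

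Next, (b) $\Rightarrow$ (c). From (b), the number of Galois conjugates of $\psi$ equals the number of Galois conjugates of $\phi$. By Corollary 2.7 (applied over a large enough cyclotomic field), the number of Galois conjugates of a $\KK G$-irrep $\mu$ is $|\QQ[\mu]:\QQ|$; likewise for $\KK H$-irreps. Hence $|\QQ[\psi]:\QQ| = |\QQ[\phi]:\QQ|$. Combined with the inclusion $\QQ[\psi] \subseteq \QQ[\phi]$ noted above, we get $\QQ[\psi] = \QQ[\phi]$.

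Finally, (c) $\Rightarrow$ (a). Suppose $\QQ[\phi] = \QQ[\psi]$, and suppose for contradiction that some Galois conjugate $\phi' = {}^\alpha\phi \ne \phi$ occurs in $\res_H^G(\psi)$. Using (c) we may choose $\alpha$ to fix $\QQ[\psi] = \QQ[\phi]$; then ${}^\alpha\psi = \psi$ since $\psi$ is determined by its restriction to the fixed field of $\alpha$ being all of $\QQ[\psi]$ — more carefully, $\alpha$ fixes every character value of $\psi$, so ${}^\alpha\psi = \psi$. But then $\phi' = {}^\alpha\phi$ occurs in $\res_H^G({}^\alpha\psi) = {}^\alpha\res_H^G(\psi)$ with the same multiplicity as $\phi$ in $\res_H^G(\psi)$; iterating $\alpha$ produces all conjugates ${}^{\alpha^k}\phi$ in $\res_H^G(\psi)$, all distinct from each other for small $k$ yet conjugate over the fixed field $\QQ[\phi]$, contradicting that $\alpha$ fixes $\QQ[\phi]$ (so $\phi$ has no nontrivial conjugate over $\QQ[\phi]$ at all). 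Hence $\phi' = \phi$, giving (a).

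**Main obstacle.** The delicate point is the (c) $\Rightarrow$ (a) step: one must argue that equality of the character fields forces the Galois stabilizer of $\psi$ to coincide with that of $\phi$, so that an automorphism moving $\phi$ necessarily moves $\psi$ and hence cannot fix $\res_H^G(\psi)$. This requires pinning down precisely which automorphisms fix which irrep — exactly the content of Theorem 2.2 and Corollary 2.7 — and being careful that "Galois conjugate occurring in $\res_H^G(\psi)$" is interpreted over a field large enough (e.g. $\QQ_{n(G)}$) that Corollary 2.7 applies and that conjugacy classes are correctly counted.
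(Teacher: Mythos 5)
Your steps (a) $\Rightarrow$ (b) and (b) $\Rightarrow$ (c) are sound: the first is the paper's Frobenius-reciprocity argument fleshed out, and the second, counting Galois conjugates via $|\QQ[\mu]:\QQ|$ and using the inclusion $\QQ[\psi] \leq \QQ[\phi]$, is a legitimate variant of the paper's reformulation (the paper instead observes that (b) and (c) are each equivalent to ``no Galois automorphism moves $\phi$ and fixes $\psi$''). But your step (c) $\Rightarrow$ (a) has a genuine gap, and as written it is circular. You begin with a Galois conjugate $\phi' = {}^\alpha\phi \neq \phi$ occurring in $\res_H^G(\psi)$ and then assert ``we may choose $\alpha$ to fix $\QQ[\psi] = \QQ[\phi]$.'' No such choice exists: the restriction of $\alpha$ to $\QQ[\phi]$ is completely determined by the requirement ${}^\alpha(\phi(h)) = \phi'(h)$, since the values of $\phi$ generate $\QQ[\phi]$; and if $\alpha$ fixed $\QQ[\phi]$ pointwise it would fix $\phi$, contradicting $\phi' \neq \phi$. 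The ``contradiction'' you reach at the end is therefore a contradiction with your own unjustified choice of $\alpha$, not with the hypothesis, so nothing is proved.

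The missing idea is precisely the Frobenius reciprocity step you already used in (a) $\Rightarrow$ (b): from the occurrence of $\phi'$ in $\res_H^G(\psi)$ one deduces that $\psi$ occurs in $\ind_H^G(\phi') = {}^\alpha(\ind_H^G(\phi)) = {}^\alpha\psi$, whence $\psi = {}^\alpha\psi$ by irreducibility. \emph{This} is what forces $\alpha$ to fix $\QQ[\psi]$ pointwise; only then does (c) give that $\alpha$ fixes $\QQ[\phi]$, hence $\phi' = {}^\alpha\phi = \phi$, the desired contradiction. The subsequent business about iterating $\alpha$ is a red herring once this is in place. With that repair the cycle of implications closes correctly.
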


\begin{proof}
The equivalence of (a) and (b) is clear by
Frobenius Reciprocity. By the standard formula
for the values of an induced character, $\QQ[\phi]
\geq \QQ[\psi]$. The fields $\QQ[\phi]$ and
$\QQ[\psi]$ are subcyclotomic, so the field
extension $\QQ[\phi]/\QQ[\psi]$ is Galois.
Conditions (b) and (c) are both equivalent to
the condition that no Galois automorphism moves
$\phi$ and fixes $\psi$. 
\end{proof}

When the equivalent conditions in Lemma 3.1 hold,
we say that $\psi$ is {\bf shallowly} induced
from $\phi$. When the equivalent conditions in
Lemma 3.2 hold, we say that $\psi$ is {\bf
narrowly} induced from $\phi$. The induction
$\psi = \ind_H^G(\phi)$ is tight if and only
if it is shallow and narrow. In the special
case $\KK = \QQ$, the narrowness condition is
vacuous: an induction of rational irreps $\psi
= \ind_H^G(\phi)$ is tight if and only if
$\End_{\QQ G}(\psi) \cong \End_{\QQ H}(\phi)$.
The definition of tight induction in the case
$\KK = \QQ$ is due to Witt \cite{Wit52}. At
the other extreme, when $\KK$ is algebraically
closed, the shallowness condition is vacuous:
an induction of complex irreps $\psi =
\ind_H^G(\phi)$ is tight if and only if
$\QQ[\psi] = \QQ[\phi]$. The definition of
tight induction in the case $\KK = \CC$ is
due to Kronstein and, independently, to
Iida-Yamanda \cite{IY92}.

%Remark 3.3
\begin{rem}
Let $H \leq L \leq G$. Let $\phi$ and $\theta
= \ind_H^L(\phi)$ and $\psi = \ind_L^G(\theta)$
be $\KK$-irreps of $H$ and $L$ and $G$,
respectively. If any two of the inductions
$\theta = \ind_H^L(\phi)$ and $\psi =
\ind_L^G(\theta)$ and $\psi = \ind_H^G(\phi)$
are shallow, then all three are shallow. If
any two of the inductions are narrow, then all
three are narrow. If any two of them are tight,
then all three are tight.
\end{rem}

The remark is obvious. It tells us, in particular,
that tight induction is transitive. In fact, given
a $\KK G$-irrep, then there is a $G$-poset whose
elements are the pairs $(H, \phi)$ such that $H
\leq G$ and $\phi$ is a $\KK H$-irrep from which
$\psi$ is tightly induced. The partial ordering is
such that $(H, \phi) \leq (L, \theta)$ provided
$H \leq L$ and $\theta$ is induced from $\phi$
(whereupon, by the remark, $\theta$ is tightly
induced from $\phi$). The Genotype Theorem 1.1
(proved in the next section) implies that the
minimal elements of the $G$-poset are the pairs
$(H, \phi)$ such that $H/\Ker(\phi)$ is Roquette.

%Theorem 3.4
\begin{thm}
Let $H \leq G$ and let $\psi$ be a $\KK G$-irrep
induced from a $\KK H$-irrep $\phi$. Let $\JJ$
be a subfield of $\KK$. Let $\LL$ be a field
extension of $\KK$. Then the following conditions
are equivalent:

\smallskip
\noin {\bf (a)} The $\JJ G$-irrep containing
$\psi$ is tightly induced from the $\JJ H$-irrep
containing $\phi$.

\smallskip
\noin {\bf (b)} $\psi$ is tightly induced
from $\phi$.

\smallskip
\noin {\bf (c)} There is a bijective
correspondence between the $\LL G$-irreps $\psi'$
contained in $\psi$ and the $\LL H$-irreps
$\phi'$ contained in $\phi$. The correspondence
is characterized by the condition that $\psi'$
is tightly induced from $\phi'$.
\end{thm}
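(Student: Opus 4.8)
The plan is to prove Theorem~3.5 by separating the tightness condition into its two constituent parts---shallowness and narrowness---using the Shallow Lemma~3.1 and Narrow Lemma~3.2, and then tracking each part through the two field changes (restriction to $\JJ$, extension to $\LL$) separately. First I would record the trivial observation that, in all three situations (a), (b), (c), the underlying induction of characters is the ``same'' induction, merely with the coefficient field varied; in particular the relevant decompositions of $\res$ are governed by the character-theoretic formulas, which are insensitive to the coefficient field. So the content is entirely about how the two numerical/field-theoretic certificates of tightness transport.

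For the narrowness half, the key is Lemma~3.2(c): narrowness of $\psi=\ind_H^G(\phi)$ is equivalent to $\QQ[\phi]=\QQ[\psi]$, a condition that mentions neither $\KK$, nor $\JJ$, nor $\LL$. Hence narrowness of the induction is literally the same condition in (a), (b) and (c)---here I must be slightly careful in (c), where $\phi'$ and $\psi'$ range over the $\LL$-irreps contained in $\phi$ and $\psi$: invoking Theorem~2.2(3), each $\QQ[\psi']$ equals $\QQ[\psi_\LL]$ and each $\QQ[\phi']$ equals $\QQ[\phi_\LL]$, and these are related to $\QQ[\psi]$, $\QQ[\phi]$ by Galois extensions that match up under Theorem~2.2(4), so ``$\QQ[\psi']=\QQ[\phi']$ for one/all $j$'' is equivalent to ``$\QQ[\psi]=\QQ[\phi]$.'' For the shallowness half, Lemma~3.1 gives the equivalence with ``$\End_{\KK H}(\phi)\cong\End_{\KK G}(\psi)$'', or equivalently the equality of $\KK$-dimensions of these division rings. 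So I would show: (i) passing from $\KK$ down to $\JJ$ preserves the ratio $\dim\End(\psi)/\dim\End(\phi)$ being $1$, using that $\End_{\KK H}(\phi)=\KK\otimes_\JJ\End_{\JJ H}(\phi_\JJ)$ when $\KK/\JJ$ is such that $\phi$ remains irreducible over $\KK$ with the same vertex field---more robustly, I would argue via the multiplicity of $\phi$ in $\res_H^G\psi$ (condition (a) of the Shallow Lemma), which is a character-theoretic inner product $\langle\res_H^G\psi,\phi\rangle$ computed over any splitting field and hence unchanged by field extension or descent among fields over which $\psi,\phi$ stay irreducible; and (ii) passing from $\KK$ up to $\LL$: for each $\LL$-constituent $\psi'$ of $\psi$ and the corresponding $\phi'$, the multiplicity of $\phi'$ in $\res_H^G(\psi')$ equals the multiplicity of $\phi$ in $\res_H^G(\psi)$, again because all these numbers are the same inner product computed over a common large splitting field.

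Concretely the steps are: (1) reduce to the multiplicity formulation of shallowness via Lemma~3.1 and the field-independent formulation of narrowness via Lemma~3.2; (2) prove $\mathrm{(a)}\Leftrightarrow\mathrm{(b)}$ by checking that both the multiplicity $\langle\res_H^G\psi,\phi\rangle$ and the field equality $\QQ[\psi]=\QQ[\phi]$ are literally unchanged when we replace $\psi,\phi$ by the $\JJ$-irreps containing them---here I lean on the fact that $\psi$ is the unique $\JJ G$-irrep containing each of its $\KK$-constituents only in the reverse direction; the forward direction needs that restriction to $\JJ$ does not merge or split these specific constituents, which follows because $\QQ[\psi]\le\KK$ already; (3) prove $\mathrm{(b)}\Leftrightarrow\mathrm{(c)}$ using Theorem~2.2 to set up the bijection $\psi'\leftrightarrow\phi'$ via a common overgroup $\Gal$-action, then transport both certificates constituent-by-constituent; (4) assemble: shallow $+$ narrow $=$ tight, in each of the three incarnations, by the characterization of tightness recalled just before Remark~3.3.

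The main obstacle I anticipate is step~(2), the descent to $\JJ$: I must be sure that the $\JJ G$-irrep $\psi'_0$ containing $\psi$ does not have \emph{other} $\KK$-constituents interfering, and that $\res_H^G(\psi'_0)$ relates to $\res_H^G(\psi)$ in the way the multiplicity count needs---equivalently, that the Schur indices and vertex fields behave compatibly so that the ``multiplicity $=1$'' condition is genuinely equivalent before and after descent, rather than merely implied in one direction. The clean way around this is to phrase everything in terms of the $\CC$-constituents (vertices) of $\psi$ and $\phi$: over $\CC$ both shallowness (multiplicity $1$ of each vertex of $\phi$ in $\res_H^G$ of the corresponding vertex of $\psi$) and narrowness (equality of the relevant $\QQ[\,\cdot\,]$ fields) are transparent and field-independent, and then Lemmas~3.1--3.2 translate these back to statements over $\JJ$, $\KK$ and $\LL$ uniformly. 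With that route, each of the three equivalences reduces to the single assertion that $\psi$ is tightly induced from $\phi$ over $\CC$, and the theorem follows.
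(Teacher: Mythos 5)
Your central strategy---that shallowness and narrowness each transport independently through the two field changes---fails, because neither property is separately preserved; only their conjunction is. Take $G=Q_8$, $H=\la a\ra\cong C_4$, $\psi$ the faithful $\CC G$-irrep and $\phi$ a faithful $\CC H$-irrep. Over $\CC$ the induction $\psi=\ind_H^G(\phi)$ is shallow (automatically, by Frobenius reciprocity) but not narrow, since $\res_H^G(\psi)=\phi+\ol{\phi}$. Descending to $\JJ=\QQ$, the containing irreps satisfy $\psi_\QQ=\ind_H^G(\phi_\QQ)$ and $\res_H^G(\psi_\QQ)=2\phi_\QQ$, so the induction is narrow (vacuously, as the paper notes after Lemma 3.2) but not shallow. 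Thus shallowness is destroyed by descent and narrowness by extension; the two certificates trade off against each other as the field moves between the extremes $\QQ$ and $\CC$. The same example refutes your claim that the multiplicity of $\phi$ in $\res_H^G(\psi)$ is ``unchanged by field extension or descent'': it is $1$ over $\CC$ and $2$ over $\QQ$, because that multiplicity is $\dim\Hom_{\KK H}(\phi,\res_H^G\psi)/\dim\End_{\KK H}(\phi)$ and the denominator changes with the field. Likewise, the narrowness condition in (a) is $\QQ[\phi'']=\QQ[\psi'']$ for the $\JJ$-irreps $\phi'',\psi''$ containing $\phi,\psi$; these fields are genuinely smaller than $\QQ[\phi],\QQ[\psi]$ in general (over $\JJ=\QQ$ the condition is empty), so it is not ``literally the same condition'' as in (b).

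Your fallback---phrasing everything in terms of the $\CC$-constituents and letting Lemmas 3.1--3.2 ``translate back uniformly''---is circular: the assertion that tightness over $\KK$ is equivalent to tightness of the corresponding complex constituents is itself an instance of the theorem (condition (c) with $\LL$ algebraically closed), and Lemmas 3.1 and 3.2 operate over a single fixed field, so they provide no such translation. The paper's proof instead runs the two halves of tightness \emph{together}: writing $\KK\psi''=m_\JJ^\KK(\psi)\sum_\alpha{}^\alpha\psi$ and $\KK\phi''=m_\JJ^\KK(\phi)\sum_\beta{}^\beta\phi$ via Theorem 2.2, it uses narrowness and shallowness of the given induction simultaneously to show that the two Galois index sets coincide and that $m_\JJ^\KK(\phi)=m_\JJ^\KK(\psi)$, whence both halves of tightness hold for $\psi''=\ind_H^G(\phi'')$; for (b)$\Rightarrow$(c) it applies $\LL\otimes_\KK\dash$ to the isomorphism $\End_{\KK H}(\phi)\cong\End_{\KK G}(\psi)$ and counts Wedderburn components and multiplicities. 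Some such interplay between shallowness and narrowness across the field change is unavoidable, and your proposal does not supply it.
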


\begin{proof}
When extending the
coefficient field for finite-dimensional modules,
the extension of the hom-space is the hom-space
of the extensions. So the $\JJ G$-irrep $\psi''$
containing $\psi$ must overlap with $\JJ H$-irrep
$\phi''$ containing $\phi$. By Theorem 2.2,
$$\KK \psi'' = m_\JJ^\KK(\psi) \sum_{\alpha \in
  \Gal(\QQ[\psi]/\QQ[\psi''])} {}^\alpha \psi
  \, , \dozspace \dozspace \KK \phi'' =
  m_\JJ^\KK(\phi) \sum_{\beta \in
  \Gal(\QQ[\phi]/\QQ[\phi''])} {}^\beta \phi \, .$$
Suppose that (b) holds. Then $\QQ[\psi] =
\QQ[\phi]$. Since $\phi''$ occurs in
$\res_H^G(\psi'')$, since ${}^\beta \psi$ is the
unique Galois conjugate of $\psi$ overlapping with
${}^\beta \phi$, and since ${}^\beta \phi$ occurs
only once in the restriction of ${}^\beta \psi$,
the set of indices $\beta$ must be contained in the
set of indices $\alpha$, and $m_\JJ^\KK(\phi) \leq
m_\JJ^\KK(\psi)$. Since $\psi''$ occurs in
$\ind_H^G(\phi'')$, since ${}^\alpha \phi$ is the
unique Galois conjugate of $\phi$ overlapping with
${}^\alpha \psi$, and since ${}^\alpha \phi$
induces to ${}^\alpha \psi$, the set of indices
$\alpha$ must be contained in the set of indices
$\beta$, and $m_\JJ^\KK(\phi) \geq m_\JJ^\KK(\psi)$.
So the two sets of indices coincide. That is to say,
$\QQ[\psi''] = \QQ[\phi'']$. Furthermore,
$m_\JJ^\KK(\phi) = m_\JJ^\KK(\psi)$. It follows
that $\phi''$ induces to $\psi''$. Also, $\phi''$
occurs only once in the restriction of $\psi$, in
other words, the induction from $\phi''$ to
$\psi''$ is shallow. We have already observed that
$\QQ[\psi''] = \QQ[\phi'']$, in other words, the
induction is narrow. Thus, (b) implies (a).

Still assuming (b), we now want (c). Each
$\LL H$-irrep contained in $\phi$ must
overlap with at least one $\LL G$-irrep contained
in $\psi$, and each $\LL G$-irrep contained
in $\psi$ must overlap with at least one
$\LL H$-irrep contained in $\phi$. Applying
the functor $\LL \otimes_\KK \dash$ to the
$\KK$-algebra isomorphism $\End_{\KK H}(\phi) \cong
\End_{\KK G}(\psi)$, we obtain an isomorphism
of semisimple rings $\End_{\LL H}(\LL \phi) \cong
\End_{\LL G}(\LL \psi)$. The number of Wedderburn
components of this semisimple ring is equal to the
number of distinct $\LL H$-irreps contained in
$\phi$, and it is also equal to the number of
distinct $\LL G$-irreps contained in $\psi$.
By Theorem 2.2, all the $\LL H$-irreps contained
in $\phi$ have the same multiplicity $m$, and all
the $\LL G$-irreps contained in $\psi$ have the
same multiplicity $n$. So the Wedderburn components
all have the same degree as matrix algebras over
their associated division rings, and $m = n$. It
follows that there is a bijection $\psi'
\leftrightarrow \phi'$ whereby $\psi'$ is shallowly
induced from $\phi'$. We must show that the
induction is narrow. By the formula for induction
of characters, the field $\QQ[\phi']$ (which is
independent of the choice of $\phi'$) contains the
field $\QQ[\psi']$ (which is independent of the
choice of $\psi'$). By Theorem 2.2, the number of
distinct $\LL H$-irreps contained in $\phi$ is
equal to the order of the Galois group
$\Gal(\QQ[\phi']/\QQ[\phi])$ while the number of
distinct $\LL G$-irreps contained in $\psi$ is
the order of $\Gal(\QQ[\psi']/ \QQ[\psi])$. But
we already know that these two numbers are equal.
Moreover, $\QQ[\psi] = \QQ[\phi]$ as part
of the hypothesis on $\psi$ and $\phi$. Therefore
$\QQ[\psi'] = \QQ[\phi']$. We have gotten (c)
from (b). To obtain (b) from (a) or from (c), we
interchange the extensions $\LL/\KK$ and $\KK/\JJ$.
\end{proof}

For facility of use, it is worth restating the
theorem.

%Theorem 3.5
\begin{thm}
{\rm (Field-Changing Theorem)}
Let $H \leq G$ and let $\psi$ be a $\KK G$-irrep
induced from a $\KK H$-irrep $\phi$. Let $\LL$ be
any field having characteristic zero. Then the
following conditions are equivalent:

\smallskip
\noin {\bf (a)} $\psi$ is tightly induced
from $\phi$.

\smallskip
\noin {\bf (b)} $\psi_\QQ$ is tightly induced
from $\phi_\QQ$.

\smallskip
\noin {\bf (c)} The $\LL G$-irreps $\psi'$ that
are quasiconjugate to $\psi$ are in a bijective
correspondence with the $\LL H$-irreps $\phi'$
that are quasiconjugate to $\phi$. They
correspond $\psi' \leftrightarrow \phi'$
when $\psi'$ is tightly induced from $\phi'$.
\end{thm}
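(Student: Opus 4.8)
The plan is to deduce Theorem 3.5 from Theorem 3.4 simply by specializing the auxiliary fields; no new argument should be needed. For the equivalence of (a) and (b), I would invoke Theorem 3.4 with $\JJ = \QQ$, which is legitimate since $\KK$ has characteristic zero and hence $\QQ \leq \KK$. By the definitions in Section 2, the $\QQ G$-irrep containing $\psi$ is precisely $\psi_\QQ$, and the $\QQ H$-irrep containing $\phi$ is precisely $\phi_\QQ$. Thus condition (a) of Theorem 3.4 becomes ``$\psi_\QQ$ is tightly induced from $\phi_\QQ$'', which is condition (b) here, while condition (b) of Theorem 3.4 is condition (a) here. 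So (a)$\Leftrightarrow$(b) is immediate.

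For the equivalence with (c), the subtlety is that the given field $\LL$ need only have characteristic zero, so it may fail to contain $\KK$ and Theorem 3.4 cannot be applied to the pair $(\KK, \LL)$ directly. Instead I would route everything through $\QQ$. Having established (a)$\Leftrightarrow$(b), it suffices to prove (b)$\Leftrightarrow$(c). Apply Theorem 3.4 a second time, now taking the subfield and the coefficient field both to be $\QQ$ and the extension field to be $\LL$ (so both ``$\JJ$'' and ``$\KK$'' of Theorem 3.4 are played by $\QQ$, and ``$\LL$'' by $\LL$). In this instance, condition (b) of Theorem 3.4 reads ``$\psi_\QQ$ is tightly induced from $\phi_\QQ$'', i.e. condition (b) here; and condition (c) of Theorem 3.4 asserts a bijection, characterized by tight induction, between the $\LL G$-irreps contained in $\psi_\QQ$ and the $\LL H$-irreps contained in $\phi_\QQ$.

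It then remains only to identify ``contained in $\psi_\QQ$'' with ``quasiconjugate to $\psi$'' at the level of $\LL$-irreps, and similarly for $\phi$. An $\LL G$-irrep $\psi'$ is quasiconjugate to $\psi$ precisely when $\psi'_\QQ = \psi_\QQ$; by Theorem 2.2(1), $\psi'_\QQ$ is the unique $\QQ G$-irrep containing $\psi'$, so this holds exactly when $\psi'$ occurs in $\LL \psi_\QQ$, i.e. exactly when $\psi'$ is contained in $\psi_\QQ$. The same reasoning applies to $\phi$. Hence condition (c) of Theorem 3.4 in this instance is verbatim condition (c) here, and the chain (a)$\Leftrightarrow$(b)$\Leftrightarrow$(c) is complete.

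I do not anticipate a genuine obstacle: the only point needing care is the bookkeeping that the correspondence furnished by Theorem 3.4 is characterized intrinsically by the tight-induction relation $\psi' \leftrightarrow \phi'$, and not merely by the overlap relation used inside its proof — so that the two applications of Theorem 3.4 need not be checked for mutual compatibility. This is already built into the statement of Theorem 3.4(c), so the argument is essentially a translation of notation.
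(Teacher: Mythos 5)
Your overall strategy is exactly the paper's: Theorem 3.5 is presented there as a restatement of Theorem 3.4, obtained by specializing the auxiliary fields, and your handling of (a)$\Leftrightarrow$(b) and of the translation ``contained in $\psi_\QQ$'' $\leftrightarrow$ ``quasiconjugate to $\psi$'' via Theorem 2.2(1) is correct. There is, however, one unlicensed step. Your second invocation of Theorem 3.4, with $\QQ$ playing the role of the coefficient field and $\psi_\QQ$, $\phi_\QQ$ playing the roles of $\psi$, $\phi$, requires the standing hypothesis of that theorem, namely that $\psi_\QQ = \ind_H^G(\phi_\QQ)$. This does \emph{not} follow from the standing hypothesis of Theorem 3.5, which only says $\psi = \ind_H^G(\phi)$ over $\KK$: when the induction is not tight, $\ind_H^G(\phi_\QQ)$ is in general a reducible $\QQ G$-rep strictly containing $\psi_\QQ$ (a dimension count on the paper's own example $\chi = \ind_{D'}^{\DD}(\phi')$ in Section 3 shows $\dim_\QQ \ind_{D'}^{\DD}(\phi'_\QQ) = 2\dim_\QQ \chi_\QQ$). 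So the instance of Theorem 3.4 you appeal to for (b)$\Leftrightarrow$(c) is simply not applicable until its hypothesis has been checked.

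The gap is harmless in the direction (b)$\Rightarrow$(c), since (b) itself asserts in particular that $\psi_\QQ$ is induced from $\phi_\QQ$, after which your application of Theorem 3.4 over $\QQ$ with extension $\LL$ goes through verbatim. For (c)$\Rightarrow$(b) you cannot argue this way as written, because you would first need to know $\psi_\QQ = \ind_H^G(\phi_\QQ)$, and establishing that is essentially the content of the implication. The repair is short: condition (c) furnishes at least one tightly induced pair, an $\LL G$-irrep $\psi'$ quasiconjugate to $\psi$ tightly induced from an $\LL H$-irrep $\phi'$ quasiconjugate to $\phi$. Now apply Theorem 3.4 with $\LL$ as the coefficient field and $\QQ$ as the subfield $\JJ$; its hypothesis ($\psi'$ induced from $\phi'$) holds, and its implication (b)$\Rightarrow$(a) says that the $\QQ G$-irrep containing $\psi'$, which is $\psi_\QQ$, is tightly induced from the $\QQ H$-irrep containing $\phi'$, which is $\phi_\QQ$. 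That is condition (b) of Theorem 3.5. With this one substitution your derivation is complete and coincides with the reading intended by the paper.
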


The theorem tells us that, in some sense,
the genetic theory is independent of the
coefficient field $\KK$. Condition (b) is
a useful theoretical criterion for tightness
of a given induction $\psi = \ind_H^G(\phi)$.
It sometimes allows us to generalize
immediately from the case $\KK = \QQ$ to the
case where $\KK$ is arbitrary; see the next
section. However, the rational irreps of a
given finite $p$-group are usually very
difficult to determine. For explicit analysis
of concrete examples, a more practical criterion
for tightness is given by the following corollary.

%Corollary 3.6
\begin{cor}
Let $H \leq G$ and let $\psi$ be a $\KK G$-irrep
induced from a $\KK H$-irrep $\phi$. Then the
vertex fields satisfy the inequality $\VV(\psi)
\leq \VV(\phi)$, and equality holds if and only
if the induction is tight.
\end{cor}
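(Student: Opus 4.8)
The plan is to reduce the claim to the Field-Changing Theorem 3.5 by observing that the vertex field is, by definition, the character field of a vertex, and vertices behave predictably under induction. First I would recall that $\VV(\psi) = \QQ[\psi_\CC]$ and $\VV(\phi) = \QQ[\phi_\CC]$, where $\psi_\CC$ and $\phi_\CC$ are vertices. Pick a vertex $\phi_\CC$ of $\phi$; then $\phi_\CC$ occurs in the $\CC H$-rep $\CC\phi$, so $\ind_H^G(\phi_\CC)$ occurs in $\CC\psi = \ind_H^G(\CC\phi)$, hence some $\CC G$-irrep constituent $\psi_\CC$ of $\ind_H^G(\phi_\CC)$ is a vertex of $\psi$. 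By the standard formula for induced characters, the values of $\ind_H^G(\phi_\CC)$ lie in $\QQ[\phi_\CC]$, so every constituent has character values in the normal closure; but $\QQ[\phi_\CC] = \VV(\phi)$ is subcyclotomic, hence Galois over $\QQ$, so in fact $\QQ[\psi_\CC] \leq \QQ[\phi_\CC]$. This gives $\VV(\psi) \leq \VV(\phi)$ unconditionally.

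Next I would address the equality criterion. The key identity is that for a $\CC G$-irrep $\psi_\CC$ overlapping a $\CC H$-irrep $\phi_\CC$ with $\psi_\CC$ a constituent of $\ind_H^G(\phi_\CC)$, one has $\QQ[\psi_\CC] = \QQ[\phi_\CC]$ precisely when no nontrivial Galois automorphism of $\QQ[\phi_\CC]$ fixes $\psi_\CC$ while moving $\phi_\CC$; by the Narrow Lemma 3.2 (applied with coefficient field $\CC$, where the shallowness condition is automatic since $\CC$ is algebraically closed) this is exactly the condition that the induction $\psi_\CC = \ind_H^G(\phi_\CC)$ is tight. So $\VV(\psi) = \VV(\phi)$ if and only if, for a suitable pair of overlapping vertices, the induction of complex irreps $\psi_\CC = \ind_H^G(\phi_\CC)$ is tight. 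Finally, the Field-Changing Theorem 3.5 — specifically the equivalence of (a) and (c) with $\LL = \CC$ — says that $\psi = \ind_H^G(\phi)$ is tight over $\KK$ if and only if the quasiconjugate complex irreps correspond under tight induction, i.e. if and only if $\psi_\CC = \ind_H^G(\phi_\CC)$ is tight. Chaining these equivalences yields the corollary.

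The main obstacle I anticipate is the bookkeeping in the equality direction: one must be careful that "a vertex of $\phi$ overlapping a vertex of $\psi$" can be chosen compatibly, i.e. that $\phi_\CC$ and the resulting $\psi_\CC$ are genuinely quasiconjugate to the given $\phi$ and $\psi$ (not to some other irrep), and that the Narrow Lemma's hypothesis — that $\psi_\CC$ is induced from $\phi_\CC$ — actually holds for the chosen constituent rather than merely that $\psi_\CC$ occurs in the induced module. This is where Lemma 2.5-style overlap arguments and the fact that all complex constituents of $\CC\psi$ are Galois conjugate (Corollary 2.7, Theorem 2.2) must be marshalled to pin down the constituent uniquely up to Galois action. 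Once that is done, the inequality part is a one-line character-value estimate and the equality part is a direct translation through Lemma 3.2 and Theorem 3.5.
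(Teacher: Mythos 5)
Your overall strategy --- pass to complex quasiconjugates, obtain the containment of vertex fields from the induced-character formula, and translate the equality criterion through the Narrow Lemma 3.2 and the Field-Changing Theorem 3.5 --- is the same as the paper's. But there is a genuine gap in the first half, and it sits exactly where you suspected. You only establish that $\psi_\CC$ is \emph{a constituent} of $\ind_H^G(\phi_\CC)$, and you then infer $\QQ[\psi_\CC] \leq \QQ[\phi_\CC]$ from the fact that the induced \emph{character} takes values in $\QQ[\phi_\CC]$. That inference is false: an irreducible constituent of a character with values in a field $F$ (Galois over $\QQ$ or not) need not have values in $F$; the Galois group over $F$ merely permutes the constituents. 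Concretely, take $G = C_4$, $H$ its subgroup of order $2$, $\KK = \QQ$ and $\phi = \sgn$: then $\ind_H^G(\phi)$ is the irreducible $2$-dimensional rational representation, its character is rational, yet its complex constituents generate $\QQ[i]$. The paper's proof rests on a strictly stronger assertion, namely that the complex quasiconjugates can be chosen so that $\psi' = \ind_H^G(\phi')$ holds as an \emph{exact equality of irreps}, after which the character formula is applied to the single irreducible character $\psi'$ itself, giving $\QQ[\psi'] \leq \QQ[\phi']$ directly. You flag precisely this point (``that $\psi_\CC$ is induced from $\phi_\CC$ \dots rather than merely that $\psi_\CC$ occurs in the induced module'') as an anticipated obstacle, but you never supply the argument, and it is not bookkeeping: it is the entire content of the inequality. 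The $C_4/C_2$ example shows it cannot be recovered from the character formula applied to a mere constituent, so the exactness of the induction at the complex level is a substantive claim that must be proved (this is what the paper's chain of extensions from $\KK$ to its algebraic closure to $\QQ_{n(G)}$ to $\CC$ is meant to deliver).

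The second half of your argument (equality holds iff the induction is tight) is structurally the same as the paper's and is sound once the exactness claim is granted: Theorem 3.5 with $\LL = \CC$ reduces tightness over $\KK$ to tightness of the complex induction, and for complex irreps tightness is narrowness, i.e.\ $\QQ[\psi_\CC] = \QQ[\phi_\CC]$ by Lemma 3.2(c). But note that both directions of this equivalence also presuppose a pair with $\psi_\CC = \ind_H^G(\phi_\CC)$ exactly --- without it you cannot invoke the Narrow Lemma at all, and the direction ``$\VV(\psi) = \VV(\phi)$ implies tight'' is left open. So the single unresolved assertion carries both halves of the corollary, and your proposal as written does not discharge it.
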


\begin{proof}
By passing from $\KK$ to the algebraic closure
of $\KK$, thence to the splitting field
$\QQ_{n(G)}$, thence to $\CC$, we see that
$\psi' = \ind_H^G(\phi')$ for some
complex irreps $\psi'$ and $\phi'$ quasiconjugate
to $\psi$ and $\phi$. By the formula for induction
of characters, the vertex field $\VV(\psi) =
\QQ[\psi']$ is contained in the vertex field
$\VV(\phi) = \QQ[\phi']$. By the Field-Changing
Theorem, $\psi$ is tightly induced from $\phi$ if
and only if $\psi'$ is tightly induced from
$\phi'$. For complex irreps, tight induction is
just narrow induction.
\end{proof}

Let us give an example. For $n \geq 5$, we define
$\DD_{2^n} = V_4 \ltimes C_{2^{n-2}}$ as a
semidirect product where $V_4$ acts faithfully.
The $2$-group $\DD_{2^n}$ has generators $a$,
$b$, $c$, $d$ with relations
$$a^{4u} = b^2 = c^2 = d^2 = bcd = 1 \, ,
  \;\;\;\;\;\; bab^{-1} = a^{-1} \, , \;\;\;\;\;\;
  cac^{-1} = a^{2u+1} \, , \;\;\;\;\;\;
  dad^{-1} = a^{2u-1}$$
where $u = 2^{n-4}$. Fixing $n$, let us write
$\DD = \DD_{2^n}$. Let $\omega$ be a primitive
$4u$-th root of unity. The subgroup $A =
\la a \ra \cong C_{4u}$ has complex irrep $\eta$
such that $\eta(a) = \omega$. The subgroup $D'
= \la b, a \ra \cong D_{8u}$ has a real abirrep
(absolutely irreducible representation) $\phi'$
such that $\CC \phi' = \ind_A^{D'}(\eta)$. Using
Lemma 2.5, we see that $\DD$ has a real abirrep
$\chi = \ind_{D'}^{DD}(\phi')$, and furthermore,
the faithful real irreps of $\DD$ are precisely
the Galois conjugates of $\chi$.
The induction from $\phi'$ to $\chi$ is not tight.
One way to see this is to calculate the vertex
fields of $\phi'$ and $\chi$ over $\QQ$. The
character values vanish of $A$ and, given an
integer $k$, we have $\phi'(a^k) = \omega^k +
\omega^{-k}$ and $\chi(a^k) = \omega^k +
\omega^{k(2u-1)} + \omega^{k(2u+1)} + \omega^{-k}$.
But $\omega^{2u} = -1$ so $\chi$ vanishes off
$\la a^2 \ra$ and $\chi(a^{2k}) = 2(\omega^{2k}
+ \omega^{-2k})$. Since $\phi'$ and $\chi$ are
absolutely irreducible, the vertex fields are
$\VV(\phi') = \QQ[\phi'] = \QQ[\omega +
\omega^{-1}]$ and $\VV(\chi) = \QQ[\chi] =
\QQ[\omega^2 + \omega^{-2}]$; the former is a
quadratic extension of the latter. Alternatively,
to see directly that the induction is shallow but
not narrow, observe that $\res_{D'}^{DD}(\chi) =
\phi' + {}^\alpha \phi'$ where $\alpha$ is any
Galois automorphism sending $\omega$ to
$-\omega$ or to $-\omega^{-1}$.
However, $\chi$ is tightly induced from a
strict subgroup. Consider the subgroups
$$C = \la c \ra \cong C_2 \, , \dozspace
  D = \la a^2, b \ra \cong D_{4u}  \, , \dozspace
  T = \la a^2, b, c \ra = C \times D \, .$$
Let $\phi$ be the real abirrep of $T$ such
that $\phi(a^2) = \omega^2 + \omega^{-2}$
and $\phi(b) = 0$ and $\phi(c) = 2$. Thus,
$\Ker(\phi) = C$ and $\phi$ is the inflation
of a faithful real abirrep of the group
$T/C \cong D_{4u}$. Direct calculation yields
$\chi = \ind_T^{DD}(\phi)$. This induction is
tight because, by the absolute irreducibility
of $\phi$, the vertex field is $\VV(\phi) =
\QQ[\phi] = \QQ[\omega^2 + \omega^{-2}] =
\VV(\chi)$. We shall be returning to this
example at the end of the next section.

%Section 4
\section{Genotypes and germs}

Let us begin by quickly proving the Genotype
Theorem 1.1. The Field-Changing Theorem 3.5
implies that, for any subgroup $H \leq G$, a
given $\KK G$-irrep $\psi$ is tightly induced
from $H$ if and only if the $\QQ G$-irrep
$\psi_\QQ$ is tightly induced from $H$. Moreover,
for any $\KK H$-irrep $\phi$ that tightly induces
to $\psi$, the $\QQ H$-irrep $\phi_\QQ$ tightly
induces to $\phi_\QQ$. Letting $K$ be the kernel
of $\phi$, then $K$ is the kernel of any Galois
conjugate of $\phi$ and, via Theorem 2.2, $K$ is
the kernel of $\phi_\QQ$. We deduce that the
subquotients from which $\psi$ is tightly induced
coincide with the subquotients from which
$\psi_\QQ$ is tightly induced. The Genotype Theorem
thus reduces to the case $\KK = \QQ$. In that
special case, the theorem was obtained by Bouc
\cite[3.4, 3.6, 3.9, 5.9]{Bou04}. Alternatively,
a similar use of the Field Changing Theorem
reduces to the case $\KK = \CC$, and in that
special case, the theorem was obtained by
Kronstein \cite[2.5]{Kro66}. The proof of
the Genotype Theorem is complete.

The existence half of the Genotype Theorem
is equivalent to the following result, which
is due to Roquette \cite{Roq58} in the case
$\KK = \QQ$ and to Kronstein \cite{Kro66}
in the case $\KK = \CC$.

%Theorem 4.1
\begin{thm}
Given a $\KK G$-irrep $\psi$, then is not
tightly induced from any strict subquotient
of $G$ if and only if $G$ is Roquette and
$\psi$ is faithful.
\end{thm}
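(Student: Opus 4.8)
The plan is to prove the two implications separately, exploiting the Field-Changing Theorem 3.5 to reduce to whichever field is convenient, and using Lemma 2.5 together with Roquette's Classification Theorem 2.4 for the structural analysis.

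\medskip
\noindent\textbf{The easy direction.} Suppose $G$ is Roquette and $\psi$ is faithful. I would argue that $\psi$ cannot be tightly induced from a strict subquotient $H/K$. Since $K \unlhd H \leq G$, passage through $\ind_{H/K}^G$ involves the inflation $\inf_{H/K}^H$, so the germ $\phi$, viewed as a $\KK H$-irrep, has $K \leq \Ker(\phi)$. If $K > 1$ then $\Ker(\phi) > 1$; since tight induction is narrow, $\QQ[\psi] = \QQ[\phi]$, and by the Narrow Lemma 3.2 the Galois conjugates of $\psi$ are in bijection with those of $\phi$, all of which have nontrivial kernel containing $K$ (up to conjugacy). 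But by Corollary 2.8 the faithful $\KK G$-irreps form a single Galois conjugacy class, so every Galois conjugate of $\psi$ is faithful --- contradiction. If instead $H < G$ (and $K$ may be trivial), then $\psi = \ind_H^G(\phi)$ with $\phi$ a faithful $\KK H/K$-irrep; I need to see that a faithful irrep of a Roquette $p$-group is never induced from a proper subgroup. Here I invoke Lemma 2.5: $G$ has a self-centralizing normal cyclic subgroup $A$, and a faithful $\KK G$-irrep $\psi$ overlaps a faithful $\KK A$-irrep $\xi$ with $\psi$ an integer multiple of $\ind_A^G(\eta)$ (suitably interpreted); the dimension count from Lemma 2.5(2)--(3) shows $\psi(1)$ is exactly $|G:A|$ times the relevant factor, and the standard Mackey/Clifford count rules out a proper inducing subgroup unless that subgroup already contains $A$, in which case $H/K$ fails to be strict or $\phi$ fails to be faithful. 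I would phrase this cleanly by noting that tightness forces the germ's kernel to match, and a faithful germ on a proper subquotient would force $A \leq H$ with $A/(A \cap K)$ acting as in Lemma 2.5, pinning $H = G$.

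\medskip
\noindent\textbf{The hard direction.} Conversely, suppose $\psi$ is not tightly induced from any strict subquotient; I must show $G$ is Roquette and $\psi$ faithful. By the Field-Changing Theorem 3.5, $\psi$ is tightly induced from a strict subquotient iff $\psi_\QQ$ is, so I may assume $\KK = \QQ$ (or, alternatively, reduce to $\KK = \CC$ and cite Kronstein; but I want the uniform argument). First, $\psi$ must be faithful: if $K = \Ker(\psi) > 1$, then $\psi$ is (tautologically, hence tightly) inflated from a faithful irrep of $G/K$, and $\inf_{G/K}^G$ is the degenerate case of $\ind_{G/K}^G$ --- this is a tight induction from the strict subquotient $G/K$, contradiction. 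So $\psi$ is faithful. Next, $G$ must be Roquette: if not, let $A$ be a maximal normal cyclic subgroup; since $\psi$ is faithful, by Lemma 2.5(1) it overlaps a faithful $\KK A$-irrep $\xi$, and by Clifford theory the stabilizer $I = \mathrm{Stab}_G(\xi)$ satisfies $A \leq I$; if $I < G$ then $\psi$ is induced from the Clifford correspondent over $I$, and one checks this induction is tight (the relative Schur index and field are preserved because the induced character already vanishes off $A \leq I$, so $\QQ[\psi] = \QQ[\phi]$ and the shallowness follows from the multiplicity-one in Clifford's theorem), giving tight induction from the strict subgroup $I$ --- contradiction. Hence $I = G$, i.e. $\xi$ is $G$-stable, so $A \leq Z$ for... no: $G$-stability of $\xi$ together with $A$ self-centralizing forces $G/A$ to embed in a specific way; but then, as in the argument preceding Theorem 2.4, one produces a normal subgroup $\langle a^{2v}, c\rangle \cong V_4$ unless $G$ is already Roquette, and a faithful irrep with $V_4$ in its... actually the cleanest route: if $G$ is non-Roquette then $G$ has a normal subgroup $N \cong V_4$ (or a non-cyclic normal abelian subgroup), and a faithful $\KK G$-irrep restricted to $N$ is not isotypic, so $\psi$ is induced from the stabilizer of a $\KK N$-constituent, which is proper --- and that induction is tight by the same Clifford-theoretic bookkeeping.

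\medskip
\noindent\textbf{Main obstacle.} The delicate point, and the step I expect to occupy most of the proof, is verifying that the Clifford-theoretic induction from a proper stabilizer $I$ is genuinely \emph{tight} (both shallow and narrow), not merely an induction. Shallowness follows from the multiplicity-one part of Clifford's theorem applied to the stabilizer, via the Shallow Lemma 3.1. Narrowness, $\QQ[\psi] = \QQ[\phi]$, requires showing the induced character gains no new field of values --- for this I would use that faithful irreps of the relevant groups vanish off the cyclic subgroup $A$ (the point made just before Theorem 2.4 and in the proof of Lemma 2.6), so both $\psi$ and $\phi$ have the same character values on $A$ up to the induction formula, and a short Galois argument as in the Narrow Lemma 3.2 closes it. Assembling these, together with Roquette's Classification Theorem 2.4 to know exactly which non-cyclic structures can occur, completes the proof.
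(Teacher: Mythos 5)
Your high-level plan (two directions, Clifford theory relative to a self-centralizing normal cyclic subgroup $A$, Field-Changing to normalize the coefficient field) has the right shape, but both directions are missing the steps where the actual work lies. For "Roquette and faithful $\Rightarrow$ not tightly induced," your central claim --- that a Mackey/Clifford count "rules out a proper inducing subgroup unless that subgroup already contains $A$" --- is false: faithful irreps of Roquette groups are routinely induced from proper subgroups, both from $A$ itself (Lemma 2.5(2)) and from subgroups not containing $A$ (the faithful $\QQ C_8$-irrep is induced from $C_4$). What must be excluded is \emph{tight} induction, and the two cases carrying the content are: (i) $A \leq H < G$, where one shows the constituents of $\res_H^G(\psi)$ are all faithful (Lemma 2.5) and hence mutually Galois conjugate (Lemma 2.6), violating narrowness --- this is the paper's Lemma 4.3; and (ii) $H$ maximal with $H \neq A$, where the paper analyzes $B = A \cap H$ and reaches a contradiction by producing a non-faithful $\KK A$-constituent of $\res_A^G(\psi)$ from $\ind_B^A$ of a faithful $\KK B$-irrep. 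Neither case is addressed in your argument. (Your handling of $K>1$ is also off: the Galois conjugates of the germ $\phi$ having nontrivial kernel says nothing about the kernels of the Galois conjugates of $\psi = \ind_H^G(\phi)$; that case is trivial only when $H = G$, and otherwise just reduces to induction from the subgroup $H$.)

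For the converse, you correctly identify tightness of the Clifford induction as the main obstacle, but "the same Clifford-theoretic bookkeeping" and narrowness-via-vanishing-off-$A$ do not supply it, and an arbitrary non-cyclic normal abelian subgroup $N$ is not the right input. The mechanism (the paper's Lemma 4.2) is kernel-based: choose $E \cong C_p \times C_p$ normal in $G$ with $E \cap Z(G) \cong C_p$ (possible by taking $E/Z$ central in $G/Z$), so that $T = C_G(E)$ is \emph{maximal} and the $p$ constituents $\epsilon_1,\dots,\epsilon_p$ of the restriction to $E$ have pairwise distinct nontrivial kernels. Since Galois conjugation preserves kernels, the corresponding Clifford constituents $\phi_1,\dots,\phi_p$ of $\res_T^G(\psi)$ lie in pairwise distinct Galois conjugacy classes, which is exactly the statement that $\psi = \ind_T^G(\phi_j)$ is tight. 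Without this specific choice of $E$ (guaranteeing distinct kernels and a maximal stabilizer), the induction from the stabilizer of an $N$-constituent need not be tight, and your argument does not close.
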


We shall give a direct proof of Theorem 4.1
without invoking the Field-Changing Theorem.
The direct proof will yield a recursive
algorithm for finding the genotype and a
germ for a given irrep. The argument
is adapted from Hambleton--Taylor--Williams
\cite{HTW90} and Bouc \cite{Bou04}.
Before presenting two preparatory lemmas, let us
make a preliminary claim: supposing that $G$ is
non-cyclic and abelian, then $G$ has no faithful
$\KK$-irreps. To demonstrate the claim, consider
a $\KK G$-irrep $\psi$. Letting $\LL$ be a
splitting field for $\KK$, then $\LL \psi$ is
a direct sum of mutually Galois conjugate
$\LL G$-irreps. All of those $\LL G$-irreps
have the same kernel $K$. The hypothesis that
$G$ is abelian implies that the $\LL G$-irreps
in question are $1$-dimensional, hence $G/K$ is
cyclic. The hypothesis that $G$ is non-cyclic
implies that $K \neq 1$. But $K$ must also be
the kernel of $\psi$. Therefore $\psi$ is
non-faithful.

%Lemma 4.2
\begin{lem}
Suppose that $G$ is non-Roquette and that there
exists a faithful $\KK G$-irrep $\psi$. Then there
exists a normal subgroup $E$ of $G$ such that
$E \cong C_p \times C_p$ and $E \cap Z(G) \cong
C_p$. For any such $E$, the subgroup $T = C_G(E)$
is maximal in $G$. Letting $\phi$ be any
$\KK T$-irrep overlapping with $\psi$, then
$\psi$ is tightly induced from $\phi$.
\end{lem}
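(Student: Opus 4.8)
The plan is to work through the three assertions in order, using Clifford theory together with the classification of $p$-groups having a self-centralizing cyclic maximal subgroup (the modular, dihedral, semidihedral, quaternion groups) and the preliminary observations already made. First I would establish the existence of $E$. Since $G$ has a faithful $\KK$-irrep, the module $\psi$ restricted to a splitting field $\LL$ is a sum of Galois conjugates with common kernel $1$; by Clifford theory the faithful $\LL G$-abirrep $\widetilde\psi$ lying under $\psi$ has the property that $Z(G)$ acts by a faithful linear character, so $Z(G)$ is cyclic. Because $G$ is non-Roquette, $G$ has a \emph{non-cyclic} normal abelian subgroup $B$; by the preliminary claim (non-cyclic abelian groups have no faithful irreps) $B$ is not faithful on $\psi$, but $Z(G)$ cyclic forces the interaction to be tightly controlled. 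I would then locate inside $B \cdot Z(G)$, or inside $\Omega_1$ of a suitable normal abelian subgroup, a normal subgroup $E \cong C_p \times C_p$; the intersection $E \cap Z(G)$ is non-trivial (any non-trivial normal subgroup of a $p$-group meets the center) and cannot be all of $E$ (else $E \leq Z(G)$ would make $Z(G)$ non-cyclic), so $E \cap Z(G) \cong C_p$. This is the step I expect to be the main obstacle: pinning down $E$ as genuinely \emph{normal} with exactly the right central intersection, rather than merely subnormal, may require a careful choice — presumably one takes a $G$-chief section and uses that $G$ is a $p$-group so chief factors are central of order $p$.

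Next I would show $T = C_G(E)$ is maximal in $G$. Write $E = \la z, t \ra$ with $z \in Z(G)$ of order $p$ and $t$ of order $p$. Then $G$ acts on $E$ fixing $z$, so the action is via a subgroup of the group of automorphisms of $E\cong C_p\times C_p$ fixing the line $\la z\ra$; the stabilizer of $\la t\ra$-cosets modulo $\la z\ra$ that fixes $z$ is a group of order $p(p-1)$, whose Sylow $p$-subgroup has order $p$. Since $G$ is a $p$-group, $G/C_G(E)=G/T$ embeds in that Sylow $p$-subgroup, so $|G:T|$ is $1$ or $p$; and $T \neq G$ because $\psi$ is faithful while $E$ would otherwise be central and hence $Z(G)$ non-cyclic. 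Therefore $|G:T| = p$ and $T$ is maximal.

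Finally, for tight induction: let $\phi$ be a $\KK T$-irrep overlapping $\psi$. Since $|G:T|=p$, either $\psi = \ind_T^G(\phi)$ or $\res_T^G(\psi)$ is a sum of $p$ $G$-conjugates of $\phi$ with $\phi$ being $G$-stable; in the latter case $\psi$ would be the inflation along $G/E' $ of something, but one checks $t \in E \setminus Z(G)$ cannot act trivially on the faithful module, forcing the non-stable (induced) case — this uses that $\widetilde\psi$ has $E$ acting with the non-linear Clifford pattern, i.e. $\res_E^G\widetilde\psi$ contains a character non-trivial on $t$ but $t$ is not central, so the inertia group of that character is exactly $T$. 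Hence $\psi = \ind_T^G(\phi)$. To see the induction is tight, invoke Corollary 3.6: it suffices to check $\VV(\psi) = \VV(\phi)$, equivalently (passing to $\CC$) that the inertia-group structure forces $\QQ[\psi'] = \QQ[\phi']$ for the complex constituents $\psi'=\ind_T^G(\phi')$. Since $\phi'$ and its distinct $G$-conjugates are \emph{not} Galois conjugate over $\QQ[\psi']$ — they differ by the conjugation action of $g\in G\setminus T$ on the $E$-character, which changes a value $t\mapsto$ (primitive $p$-th root) in a way not realized by any element of $\Gal(\QQ[\psi']/\QQ)$ fixing $\psi'$ — no Galois conjugate of $\phi'$ other than $\phi'$ itself occurs in $\res_T^G(\psi')$, so the induction is narrow, hence tight for complex irreps, hence tight for $\KK$-irreps by the Field-Changing Theorem 3.5. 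This last point — separating $G$-conjugacy from Galois conjugacy for the constituents — is the delicate part of the tightness argument, and I would handle it by explicit inspection of the action on the order-$p$ character of $\la t\ra$.
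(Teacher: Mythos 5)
Your plan follows essentially the same route as the paper: $Z(G)$ is cyclic, $E$ is extracted as a chief section above the minimal central subgroup inside $\Omega_1$ of a normal non-cyclic abelian subgroup, $|G:T|=p$ via the stabilizer of $z$ in $\Aut(E)$, and tightness comes from showing that the $p$ mutually $G$-conjugate constituents of $\res_E^G(\psi)$ lie in distinct Galois classes, so that the Clifford constituents $\phi_j$ of $\res_T^G(\psi)$ do too. One caveat: in your inertia-group and Galois-distinctness steps the pivot should be the central element $z$, not $t$ --- a linear character of $E$ that is non-trivial on $t$ but trivial on $z$ would be $G$-stable, since $[G,E]\leq\la z\ra$. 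What rescues the argument is faithfulness: $z$ is fixed by conjugation, so if one constituent of $\res_E^G(\psi)$ killed $z$ then all of its $G$-conjugates would, forcing $z\in\Ker(\psi)$; hence every constituent is non-trivial on $z$, its inertia group is exactly $T$, and the $p$ conjugates have pairwise distinct kernels --- which is the paper's cleaner way of seeing that they are Galois-inequivalent (Galois conjugation preserves kernels), working directly over $\KK$ without the detour through $\CC$ and Corollary 3.6.
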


\begin{proof}
The argument is essentially in \cite[2.16]{HTW90}
and \cite[3.4]{Bou04}, but we must reproduce the
constructions in order to check the tightness
of the induction. First observe that, given any
normal non-cyclic abelian subgroup $A$ of $G$,
then the restriction of $\psi$ to $A$ is
faithful, whence the preliminary claim tells us
that any $\KK A$-irrep overlapping with $\psi$
must be non-inertial. The center $Z(G)$ is cyclic
because every $\KK Z(G)$-irrep is inertial in
$G$. Let $Z$ be the subgroup of $Z(G)$ with order
$p$. let $B$ be the maximal elementary abelian
subgroup of $A$. Then $Z \lhd B \unlhd G$ and
$B/Z$ intersects non-trivially with the centre
of $G/Z$, so there exists an intermediate
subgroup $Z \leq E \leq B$ such that $E/Z$ is
a central subgroup of $G/Z$ with order $p$.
Plainly, $E$ satisfies the required conditions.
The non-trivial $p$-group $G/T$ embeds in the
group $\Aut(C_p \times C_p) = \GL_2(p)$, which
has order $p(p-1)(p^2 - 1)$. So $T$ is maximal
in $G$.

Let $\epsilon_1$ be a $\KK E$-irrep overlapping
with $\psi$. The preliminary claim implies that
the inertia group of $\epsilon_1$ is a strict
subgroup of $G$. On the other hand, the inertia
group must contain the centralizer $T$ of $E$.
But $T$ is maximal. So $T$ is the inertia group
of $\epsilon_1$. The $\KK E$-irreps overlapping
with $\psi$ are precisely the $G$-conjugates of
$\epsilon$, and we can number them as $\epsilon_1$,
$...$, $\epsilon_p$ because $p = |G : T|$. The
proof of the preliminary claim reveals that
the kernels of $\epsilon_1$, $...$, $\epsilon_p$
are mutually distinct; the kernels are non-trivial
yet their intersection is trivial. In particular,
$\epsilon_1$, $...$, $\epsilon_p$ belong to
mutually distinct Galois conjugacy classes.
By Clifford theory, $\res_T^G(\psi) = \phi_1 +
... + \phi_p$ as a direct sum of $\KK T$-irreps
such that each $\phi_j$ restricts to a
multiple of $\epsilon_j$. Therefore, $\phi_1$,
$...$, $\phi_p$ are mutually distinct and, in
fact, they belong to mutually distinct Galois
conjugacy classes. It follows that each $\phi_j$
induces tightly to $\psi$.
\end{proof}

%Lemma 4.3
\begin{lem}
Let $A$ be a self-centralizing normal cyclic
subgroup of $G$ and let $A \leq H < G$. Then no
faithful $\KK G$-irrep is tightly induced from $H$.
\end{lem}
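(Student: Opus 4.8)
The plan is to invoke Corollary~3.6, which asserts that for a $\KK G$-irrep $\psi$ induced from a $\KK H$-irrep $\phi$ the induction is tight if and only if $\VV(\psi)=\VV(\phi)$. So it suffices to show that whenever $\psi$ is a faithful $\KK G$-irrep with $\psi=\ind_H^G(\phi)$ for some $\KK H$-irrep $\phi$ (the only case needing attention), one has $\VV(\psi)\neq\VV(\phi)$. Write $n=|A|$. Since $A\unlhd G$ is self-centralising and $A\leq H$, we also have $C_H(A)=C_G(A)=A$, so conjugation embeds $\overline G:=G/A$ and $\overline H:=H/A$ as subgroups of $\Aut(A)\cong(\ZZ/n)^\times=\Gal(\QQ_n/\QQ)$, and $\overline H<\overline G$ because $A\leq H<G$. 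This strict inclusion, transported through the Galois correspondence, will be the whole point; the bulk of the work is identifying $\VV(\psi)$ and $\VV(\phi)$ with the fixed subfields $\QQ_n^{\overline G}$ and $\QQ_n^{\overline H}$.

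First I would check that $\phi$ is faithful. Extending scalars to $\CC$, the module $\psi\otimes_\KK\CC$ is a sum of vertices of $\psi$, each faithful because the kernel is a quasiconjugacy invariant. By Clifford's theorem applied to $A\unlhd G$, the restriction to $A$ of each such vertex is a sum of $G$-conjugate linear characters of $A$, all faithful: over a cyclic group, $G$-conjugate characters share a single kernel, which here must be trivial. Hence $\res_A^G(\psi)\otimes\CC$ is a sum of faithful linear characters of $A$. Now $\phi$ occurs in $\res_H^G(\psi)$ by Frobenius reciprocity, so $\res_A^H(\phi)\otimes\CC$ is a subsummand of $\res_A^G(\psi)\otimes\CC$, hence again a sum of faithful linear characters; thus $\Ker(\phi)\cap A=1$. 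Since $A\unlhd H$, this forces $[\Ker(\phi),A]=1$, so $\Ker(\phi)\leq C_H(A)=A$, whence $\Ker(\phi)=1$. This is the step that genuinely uses the hypothesis $A\leq H$: the group $\DD_{2^n}$ of Section~3 — where $\chi$ is tightly induced from a $\phi$ with $\Ker(\phi)=\la c\ra\neq 1$ — shows the lemma is false without it.

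Next I would compute the two vertex fields. Fix a primitive $n$-th root of unity $\omega$. Let $\psi_\CC$ be a vertex of $\psi$; it is faithful, and the inertia group in $G$ of a faithful linear character of $A$ is $C_G(A)=A$, so Clifford's theorem gives $\psi_\CC=\ind_A^G(\xi)$ for a faithful linear character $\xi$ of $A$, say $\xi(a)=\omega^k$ with $k\in(\ZZ/n)^\times$. For $s\in(\ZZ/n)^\times=\Gal(\QQ_n/\QQ)$ we have ${}^s\psi_\CC=\ind_A^G({}^s\xi)$, and this equals $\psi_\CC$ exactly when ${}^s\xi$ is $G$-conjugate to $\xi$, i.e. exactly when $s\in\overline G$; so the stabiliser of $\psi_\CC$ in $\Gal(\QQ_n/\QQ)$ is $\overline G$, and $\VV(\psi)=\QQ[\psi_\CC]=\QQ_n^{\overline G}$. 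Since $\phi$ is faithful, the identical computation carried out inside $H$ yields $\VV(\phi)=\QQ_n^{\overline H}$. As $\overline H<\overline G$ inside $\Gal(\QQ_n/\QQ)$, the Galois correspondence gives $\QQ_n^{\overline G}\leq\QQ_n^{\overline H}$ with $\QQ_n^{\overline G}\neq\QQ_n^{\overline H}$, that is, $\VV(\psi)$ is a proper subfield of $\VV(\phi)$; by Corollary~3.6 the induction $\psi=\ind_H^G(\phi)$ is not tight.

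I expect the one delicate point to be the faithfulness of $\phi$: it is simultaneously the step forcing the hypothesis $A\leq H$ and the fact making the formula $\VV(\phi)=\QQ_n^{\overline H}$ available. The remaining ingredients — Clifford's theorem, the classical identification of the subfields of $\QQ_n$ with the fixed fields of subgroups of $(\ZZ/n)^\times$, and Corollary~3.6 — are routine.
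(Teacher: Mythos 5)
Your proof is correct, but it takes a genuinely different route from the paper's. The paper first reduces to the case where $H$ is maximal (via Remark 3.3), so that $H \unlhd G$ and Clifford theory gives $\res_H^G(\psi) = \phi_1 + \dots + \phi_p$; it then quotes Lemma 2.5 to see the $\phi_j$ are faithful and Lemma 2.6 (in its extended form covering all groups with a self-centralizing normal cyclic subgroup, including the modular $2$-groups) to see they are Galois conjugate, which contradicts tightness whether the $\phi_j$ coincide (shallowness fails) or not (narrowness fails). You instead bypass the reduction and the classification underlying Lemma 2.6 entirely: you prove $\phi$ is faithful (correctly using $\Ker(\phi)\cap A = 1$ plus $C_H(A)=A$ — and this is indeed where $A \leq H$ enters), compute both vertex fields explicitly as $\VV(\psi) = \QQ_n^{G/A}$ and $\VV(\phi) = \QQ_n^{H/A}$ via the inertia-group argument and the Galois correspondence, and invoke Corollary 3.6. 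Your version costs a dependence on Corollary 3.6 (hence on the Field-Changing Theorem 3.5), which the paper's proof avoids, but it buys strictly more information: it identifies the vertex fields and shows $|\VV(\phi):\VV(\psi)| = |G:H|$, whereas the paper only derives a contradiction. Both arguments are sound; yours is self-contained on the group-theoretic side at the price of leaning on the field-changing machinery of Section 3.
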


\begin{proof}
Deny, and consider a faithful $\KK G$-irrep $\psi$
that is tightly induced from a $\KK H$-irrep
$\phi$ of $H$. By Remark 3.3, we may assume that
$H$ is maximal in $G$. In particular, $H \unlhd G$.
So $\res_H^G(\psi) = \phi_1 + ... + \phi_p$ as a
sum of $G$-conjugates of $\phi$. Since $A$ is
self-centralizing in both $G$ and $H$, Lemma 2.5
implies that $\phi_1$, $...$, $\phi_p$ are
faithful. Lemma 2.6 implies that $\phi_1$, $...$,
$\phi_p$ are Galois conjugates. This contradicts
the tightness of the induction from $\phi$.
\end{proof}

In one direction, Theorem 4.1 is immediate from
Lemma 4.2. To complete the direct proof of the
theorem, it remains only to show that, supposing
$G$ is Roquette and letting $\psi$ be a faithful
$\KK G$-irrep, then $\psi$ is not tightly induced
from a strict subgroup. Our argument is close to
\cite[2.15]{HTW90}, but with some modification
(their appeal to the uniqueness of the ``basic
representation'' does not generalize). For a
contradiction, suppose that $\psi$ is tightly
induced from a $\KK H$-irrep $\phi$ where
$H < G$. Again, by Remark 3.3, we may assume
that $|G : H| = p$. By Roquette's Classification
Theorem 2.4, $G$ has a self-centralizing cyclic
subgroup $A$ with index $1$ or $p$. Plainly,
$G$ cannot be cyclic. So $|G : A| = p$. By
Lemma 4.3, $H \neq A$. So the subgroup $B =
A \cap H$ has index $p^2$ in $G$.

First suppose that $B$ is not self-centralizing
in $H$. Then $H$ must be abelian. But $G$ is
Roquette, hence $H$ is cyclic. Also, $G$ is
non-abelian, so $H$ is self-centralizing.
This contradicts Lemma 4.3. Now suppose that
$B$ is self-centralizing in $H$. By Lemma 2.5,
there exists a faithful
$\KK B$-irrep $\xi$ such that $\ind_B^H(\xi)$
is a multiple of $\phi$. Letting $\zeta =
\ind_B^A(\xi)$, then $\ind_A^G(\zeta)$ is a
multiple of $\psi$. Every $\KK A$-irrep
occurring in $\zeta$ must also occur in
$\res_A^G(\psi)$. But $\zeta$ is induced
from $B$, so some non-faithful $\KK A$-irrep
$\eta$ must occur in $\zeta$. Perforce,
$\eta$ occurs in $\res_A^G(\psi)$. This
contradicts part (1) of Lemma 2.5. The
direct proof of Theorem 4.1 is finished.

Lemma 4.2 gives an algorithm for finding a
genetic subquotient and a germ. First we
replace $G$ with $G/\Ker(\psi)$ to reduce to
the case where $\psi$ is faithful. If
$G/\Ker(\psi)$ is Roquette, then $G/\Ker(\psi)$
is a genetic subquotient, $\psi$ is a germ,
and the algorithm terminates. Otherwise, in
the notation of the lemma, we replace $G$ and
$\psi$ with $T$ and $\phi$, respectively,
and we repeat the process.

By the way, the noncyclic abelian subgroup
$E$ is central in $T$, so the $\KK T$-irrep
$\phi$ is never faithful, and we deduce the
second part of the following incidental
corollary. The first part of the corollary
is immediate from the Genotype Theorem 1.1.

%Corollary 4.4
\begin{cor}
Let $H/K$ and $H'/K'$ be genetic factors for
the same $\KK G$-irrep. Then $|H| = |H'|$ and
$|K| = |K'|$. Furthermore, $|G : H| \leq |K|$.
\end{cor}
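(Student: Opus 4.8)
The bulk of the statement --- that $|H|$ and $|K|$ do not depend on the choice of genetic subquotient --- I would get essentially for free from the Genotype Theorem 1.1 together with a dimension count. Put $R := \Typ(\psi)$, so that $H/K \cong R \cong H'/K'$ and in particular $|H|/|K| = |R| = |H'|/|K'|$. Since $\psi$ is tightly induced from the germ $\phi$, and tight induction is in particular induction from a subquotient, $\psi(1) = |G:H| \cdot \phi(1)$; likewise $\psi(1) = |G:H'| \cdot \phi'(1)$. Now $\phi$ and $\phi'$ are faithful $\KK$-irreps of the isomorphic groups $H/K$ and $H'/K'$, and by part (1) of Lemma 2.6 the faithful $\KK$-irreps of a Roquette group form a single orbit under $\Aut$, hence all have the same $\KK$-dimension; therefore $\phi(1) = \phi'(1)$. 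Comparing, $|G:H| = |G:H'|$, so $|H| = |H'|$, and then $|K| = |H|/|R| = |H'|/|R| = |K'|$.

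For the inequality $|G:H| \leq |K|$ I would appeal to the recursive construction of Lemma 4.2. Because $|H|$ and $|K|$ are now known to be the same for every genetic subquotient of $\psi$, it is enough to verify the inequality for the particular one, say $\hat H/\hat K$, that the algorithm outputs. The algorithm produces a descending chain $G = G_0 > G_1 > \cdots > G_r = \hat H$ and $\KK G_i$-irreps $\psi_i$ (with $\psi_0 = \psi$ and each $\psi_{i+1}$ overlapping $\psi_i$) in which $G_{i+1}$ is the preimage in $G_i$ of the centralizer, inside $G_i/\Ker(\psi_i)$, of a normal subgroup $E_i \cong C_p \times C_p$ as in Lemma 4.2. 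Each such centralizer is maximal, so $|G_i : G_{i+1}| = p$, and hence $|G : \hat H| = p^r$.

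It remains to see that the kernels grow fast enough. At each step $\Ker(\psi_i) \leq \Ker(\psi_{i+1})$, and the image of $\Ker(\psi_{i+1})$ in $G_i/\Ker(\psi_i)$ intersects $E_i$ in the kernel of a $\KK E_i$-irrep overlapping the faithful irrep of $G_i/\Ker(\psi_i)$ afforded by $\psi_i$ (this is where one uses that the germ at each stage, restricted to $E_i$, is a multiple of one of the $\KK E_i$-irreps $\epsilon_1,\dots,\epsilon_p$ occurring in the proof of Lemma 4.2). By that same proof this kernel is a nontrivial proper subgroup of $E_i \cong C_p \times C_p$, so it has order exactly $p$; hence $|\Ker(\psi_{i+1})| \geq p \cdot |\Ker(\psi_i)|$. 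Iterating, $|\hat K| = |\Ker(\psi_r)| \geq p^r \cdot |\Ker(\psi)| \geq p^r = |G : \hat H|$, and therefore $|G : H| = |G : \hat H| \leq |\hat K| = |K|$.

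The first paragraph is routine. The delicate point is the kernel bookkeeping in the last one: one must keep straight whether $E_i$, the irreps $\psi_i$, and their kernels are being viewed inside $G_i$ or inside the accumulated quotient $G_i/\Ker(\psi_i)$, and one must check that every centralizer step genuinely enlarges the germ's kernel by a factor of at least $p$ --- which comes down to the fact, recorded in the proof of Lemma 4.2, that a noncyclic abelian group has no faithful $\KK$-irrep. Everything else is immediate from results already established.
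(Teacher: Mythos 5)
Your proof is correct and follows the same route as the paper: the first part via the Genotype Theorem plus the degree count $\psi(1)=|G:H|\,\phi(1)$, and the inequality by running the Lemma 4.2 algorithm and observing that, since $E$ is central and noncyclic abelian in $T=C_G(E)$, the germ at each stage is non-faithful, so the kernel grows by a factor of at least $p$ while the index drops by exactly $p$. The paper records exactly this observation (in two sentences) as its proof of the corollary; your version just spells out the bookkeeping.
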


Let us end this section with a reassessment
of the example $\DD = \DD_{2^n} = \DD_{16u}$,
which was discussed at the end of the previous
section. We employ the same notation as before.
Recall that, although the $\RR \, \DD$-irrep
$\chi$ is induced from the subgroup $D' \cong
D_{4u}$, the induction is not tight. The
failure of tightness can now be seen straight
from Lemma 4.3 because $D'$ contains the
self-centralizing normal cyclic subgroup $A$.
We have already seen that $\chi = \ind_T^G(\phi)$
and that $\phi$ is inflated from a faithful
$\RR$-irrep of the subquotient $T/C \cong
D_{4u}$. So, if $n \geq 6$, then $T/C$ is a
genetic subquotient and $\phi$ is a germ. In
particular, the genetic type is $\Typ(\chi) =
D_{2^{n-2}}$, except in the case $n = 5$, and
in that case, $\Typ(\chi) = C_2$. But let us
recover these conclusions from the algorithm in
a methodical way. As we noted in Section 2, the
$2$-group $\Mod_{8u} = \la a, c \ra$ has a
characteristic subgroup $E = \la a^{2u}, c
\ra \cong V_4$. Treating $\Mod_{8u}$ as
a maximal subgroup of $\DD$, then $E$ is
normal in $\DD$, and the subgroup $T = C_G(E)$
and the irrep $\phi$ that appear in
Lemma 4.2 coincide with the subgroup $T =
\la a^2, b, c \ra$ and the irrep $\phi$ which
we considered at the end of Section 3. Noting
that $C = \Ker(\phi)$, we again arrive at the
conclusion that, if $n \geq 6$ then $T/C$
is a genetic subquotient and $\phi$ is a germ.
Of course, when $n = 5$, the algorithm
continues, the second iteration replacing the
faithful $\RR D_8$-irrep with the faithful
$\RR C_2$-irrep. Let us point out that, in
Section 3, we calculated the vertex fields
$\chi$ and $\phi$ in order to show that the
induction $\chi = \ind_{T/C}^{DD}(\phi)$ is
tight. We have now dispensed with that trip,
and the tightness has been delivered to us
as part of the conclusion of Lemma 4.2.

%Section 5
\section{Characterizations of the genotype}

In the first movement, we shall confine our
attention to the Roquette $p$-groups. For
those $p$-groups, we shall calculate some of
the invariants that were listed in Section 2.
In the second movement, we shall show that all
the invariants listed in Section 2 are genetic
invariants. We shall also terminate a couple
of loose-ends concerning well-definedness.
The third movement will address two questions
that were raised in Section 1: How can the
genotype $\Typ(\psi)$ be ascertained from easily
calculated genetic invariants such as the order
$v(\psi)$, the vertex set $\Vtx(\psi)$, the
vertex field $\VV(\psi)$, the Frobenius--Schur
type $\Delta(\psi)$? How can $\Typ(\psi)$ be
used to ascertain less tractable genetic
invariants such as the exponent $n(\psi)$,
the minimal splitting fields, the Fein field?

To open the first movement, let us observe that,
when $p$ is odd, there is nothing much to say,
as in the next lemma. Note that, given a
$\KK G$-irrep $\psi$, then $\VV(\psi)$ is a
splitting field for $\psi_\QQ$ if and only if
$m(\psi) = 1$. For the time-being, we shall
understand a Fein field for $\psi$ to be a
field that is both a subfield of $\QQ_{n(\psi)}$
and also a splitting field for $\psi_\QQ$. When
we have established the existence and uniqueness
of the Fein field in Theorem 5.7, we shall be at
liberty to write the Fein field as $\Fein(\psi)$.

%Lemma 5.1
\begin{lem}
Suppose that $p$ is odd. Let $m$ be a positive
integer. Let $\psi$ be a faithful $\KK$-irrep of
the cyclic group $C_{p^m}$. Then the order of
$\psi$ is $v(\psi) = p^m - p^{m-1}$. The
exponent of $\psi$ is $n(\psi) = p^m$. The
unique minimal splitting field for $\psi_\QQ$
is the unique Fein field for $\psi$, and it
coincides with the vertex field $\VV(\psi) =
\QQ_{p^m}$. The Schur index is $m(\psi) = 1$.
The Frobenius--Schur type is $\Delta(\psi) =
\CC$. The vertex set $\Vtx(\psi)$ is free and
transitive as permutation set for the Galois
group $\Aut(\QQ_{p^m}) = \Gal(\QQ_{p^m})
\cong \Aut(C_{p^m}) \cong (\ZZ/p^m)^\times$.
\end{lem}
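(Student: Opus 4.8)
We set up notation as follows: write $G = C_{p^m} = \la a \ra$ and fix a primitive $p^m$-th root of unity $\omega$; the vertices of $\psi$ are, by Corollary 2.8, exactly the faithful $\CC G$-irreps, namely the characters $a \mapsto \omega^k$ with $k$ coprime to $p^m$. The plan is to reduce everything to two computations: that the vertex field is $\VV(\psi) = \QQ_{p^m}$, and that the Schur index is $m(\psi) = 1$. The first is immediate, since $\VV(\psi) = \QQ[\psi_\CC] = \QQ[\omega] = \QQ_{p^m}$ for any vertex $\psi_\CC$, and since $G$ is cyclic we have $n(G) = |G| = p^m$. For the second I would invoke the Wedderburn decomposition $\QQ C_{p^m} \cong \prod_{d \mid p^m} \QQ_d$ into cyclotomic fields, so that the Wedderburn component of $\QQ G$ attached to $\psi_\QQ$ is the field $\QQ_{p^m}$; hence $\End_{\QQ G}(\psi_\QQ) \cong \QQ_{p^m}$ is commutative and $m(\psi) = 1$.

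Everything else then falls out. Since $v(\psi) = |\VV(\psi) : \QQ|$, we get $v(\psi) = p^m - p^{m-1}$, and as $|\Vtx(\psi)| = v(\psi)$, the vertex set has $p^m - p^{m-1}$ elements; part (2) of Lemma 2.6 makes $\Aut(\QQ_{p^m})$ act transitively on $\Vtx(\psi)$, and comparing orders upgrades this to a free transitive action, with the identification $\Aut(\QQ_{p^m}) = \Gal(\QQ_{p^m}) \cong \Aut(C_{p^m}) \cong (\ZZ/p^m)^\times$ exactly as in the proof of Lemma 2.6. Because $m(\psi) = 1$, the field $\VV(\psi) = \QQ_{p^m}$ is itself a splitting field for $\psi_\QQ$ (which also follows from Brauer's Splitting Theorem, as $n(G) = p^m$), and a field $\LL$ splits $\psi_\QQ$ if and only if it is a splitting field for $\QQ_{p^m} \cong \End_{\QQ G}(\psi_\QQ)$, that is — since $\QQ_{p^m}/\QQ$ is Galois — if and only if $\QQ_{p^m}$ embeds in $\LL$; so $\QQ_{p^m}$ is the unique minimal splitting field for $\psi_\QQ$, coinciding with $\VV(\psi)$. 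The exponent $n(\psi)$ is the least $n$ with $\QQ_{p^m} \subseteq \QQ_n$, which for odd $p$ forces $p^m \mid n$, hence $n(\psi) = p^m$ and $\QQ_{n(\psi)} = \QQ_{p^m}$; the only subfield of $\QQ_{n(\psi)}$ that splits $\psi_\QQ$ is therefore $\QQ_{p^m}$ itself, which is the unique Fein field. Finally, $p^m \geq 3$ gives $\omega \neq \overline{\omega}$, so $\psi_\CC$ is not self-conjugate, $\fs(\psi_\CC) = 0$, and the Frobenius--Schur type is $\Delta(\psi) = \CC$.

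The only step requiring a moment of care — and what I would regard as the main (though modest) obstacle — is the uniqueness of the minimal splitting field and the identification of the Fein field: these rest on knowing that $\End_{\QQ G}(\psi_\QQ)$ is the field $\QQ_{p^m}$ rather than a larger division algebra, and on the Galois property of $\QQ_{p^m}/\QQ$ to pass between ``$\QQ_{p^m}$ embeds in $\LL$'' and ``$\LL$ splits $\psi_\QQ$''. Beyond that, the argument is a direct bookkeeping exercise resting on Lemma 2.5, Lemma 2.6, and the cyclotomic Wedderburn decomposition of $\QQ C_{p^m}$, with no essential difficulty.
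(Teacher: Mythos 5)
Your proof is correct, and it fills in exactly the routine verification the paper omits: Lemma 5.1 is stated without proof (the author says "when $p$ is odd, there is nothing much to say"), and your argument via the cyclotomic Wedderburn decomposition of $\QQ C_{p^m}$, Lemma 2.6, and Corollary 2.8 is the standard bookkeeping the paper clearly has in mind. No gaps.
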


We refrain from a systematic discussion of
the groups $C_1$ and $C_2$. The only Roquette
$p$-groups left are the Roquette $2$-groups
with more than one faithful complex irrep.
These will be covered by the next four lemmas.
The lemmas are inevitable exercises, hence
they are well-known. If one could collate a
trawl of citations encompassing all the
conclusions, then that would be a gnomic
achievement. We mention that some of the
material --- including a rather different
discussion of minimal splitting fields for
the quaternion groups --- can be found in
Leedham-Green--McKay \cite[10.1.17]{LM02}.

Let us throw some more notation.
We shall be making use of the matrices
$$B = \openmat 1 & 0 \\ 0 & \;\;\; -1 \closemat
  \; , \dozspace D = \openmat 0 & \;\;\; 1 \\
  1 & 0  \closemat \; , \dozspace X = \openmat
  0 & \;\;\; -1 \\ 1 & 0 \closemat \; .$$
We define $\ex(t) = e^{2 \pi i t}$ and
$\cs(t) = \cos(2 \pi t)$ and $\sn(t) =
\sin(2 \pi t)$ for $t \in \RR$. The matrices
$$R(t) = \openmat \cs(t) & \;\; -\sn(t) \\
  \sn(t) & \cs(t) \closemat \; , \;\;\;\;\;\;
  I(t) = i \openmat \sn(t) & \;\; \cs(t) \\
  -\cs(t) & \sn(t) \closemat \; , \;\;\;\;\;\;
  S(t) = \openmat \cs(t) \; & \; i \, \sn(t) \\
  i \, \sn(t) & \cs(t) \; \closemat$$
satisfy the relation $R(t + t') = R(t) R(t')$
and similarly for $I(t+t')$ and $S(t+t')$. Let
$$A_s(t) = \openmat \cs(t) + i \, \sn(t)/\cs(s)
  & \sn(t) \sn(s)/\cs(s) \;\;\; \\ \sn(t) \sn(s)/\cs(s)
  \;\;\; & \;\;\; \cs(t) - i \, \sn(t)/\cs(s) \closemat$$
where $s \in \RR$. By direct calculation,
$A_s(t + t') = A_s(t) A_s(t')$.

Let $v$ be a power of $2$ with $v \geq 2$. For
convenience, we embed $\QQ_{4v}$ in $\CC$ by
making the identification $\QQ_{4v} = \QQ[\omega]$
where $\omega = \ex(1/4v)$. The Galois group
$$\Aut(\QQ_{4v}) = \Gal(\QQ_{4v} : \QQ) \cong
  \Aut(C_{4v}) \cong (\ZZ/4v)^\times \cong
  C_2 \times C_v$$
has precisely $3$ involutions, namely $\beta$,
$\gamma$, $\delta$ which act on $\QQ_{4v}$ by
$$\beta(\omega) = \omega^{-1} \; , \dozspace
  \gamma(\omega) = \omega^{2v+1} = -\omega
  \; , \dozspace \delta(\omega) = \omega^{2v-1}
  = - \omega^{-1} \; .$$
For a subgroup $\cH \leq \Aut(\QQ_{4v})$, we
let $\Fix(\cH)$ be the intermediate subfield
$\QQ \leq \Fix(\cH) \leq \QQ_{4v}$ fixed by
$\cH$. A straightforward application of the
Fundamental Theorem of Galois Theory shows
that $\QQ_{4v}$ has precisely $3$ maximal
subfields, namely

\smallskip
${\displaystyle \dozspace \;\;\;\;\;\;
\QQ_{4v}^\RR = \Fix \la \beta \ra =
  \QQ[\omega + \omega^{-1}] = \QQ[\cs(r/4v)]
  = \QQ[\sn(r/4v)] = \RR \cap \QQ_{4v} \, ,}$

\smallskip
${\displaystyle \dozspace \;\;\;\;\;\;
\QQ_{2v} = \Fix \la \gamma \ra =
  \QQ[\omega^2] \, ,}$

\smallskip
${\displaystyle \dozspace \;\;\;\;\;\;
\QQ_{4v}^\II = \Fix \la \delta \ra = \QQ[\omega
  - \omega^{-1}] = \QQ[i \, \cs(r/4v)]
  = \QQ[i \, \sn(r/4v)] \, .}$

\smallskip
\noin Here, $r$ is any odd integer. These three
subfields all have index $2$ in $\QQ_{4v}$. In
other words, they have degree $v$ over $\QQ$.
Glancing back at the proof of Theorem 2.4, we
observe that $\beta$ and $\delta$ have no square
root in $\Aut(\QQ_{4v})$. So $\gamma$ belongs to
every non-trivial subgroup of $\Aut(\QQ_{4v})$
except for $\la \beta \ra$ and $\la \delta \ra$.
Therefore $\QQ_{2v}$ contains every strict
subfield of $\QQ_{4v}$ except for $\QQ_{4v}^\RR$
and $\QQ_{4v}^\II$. These observations yield a
complete description of the intermediate subfields
$\QQ \leq K \leq \QQ_{4v}$. (In particular, we see
that, letting $u$ be any power of $2$ with
$2 \leq u \leq v$, then there are precisely three
intermediate fields with degree $u$ over $\QQ$.
But there are four families of Roquette $2$-groups:
cyclic, dihedral, semidihedral, quaternion. This
already suggests that distinguishing between the
four families may be little awkward.)

In the following four lemmas, we still let
$v$ be a power of $2$ with $v \geq 2$. The
first one is similar to Lemma 5.1, and again,
it is obvious. We postpone discussion of the
vertex set.

%Lemma 5.2
\begin{lem}
Let $\psi$ be a faithful $\KK$-irrep of the
cyclic group $C_{2v}$. Then the order is
$v(\psi) = v$. The exponent is $n(\psi) =
2v$. The unique minimal splitting field for
$\psi_\QQ$ is the unique Fein field for
$\psi$, and it coincides with the vertex
field $\VV(\psi) = \QQ_{2v}$. The Schur index
is $m(\psi) = 1$. The Frobenius--Schur type
is $\Delta(\psi) = \CC$.
\end{lem}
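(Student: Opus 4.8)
The plan is to exploit that $C_{2v}$ is abelian, so its complex irreps are one-dimensional, and then to read off every invariant in the statement from the single primitive root of unity describing a faithful complex irrep. Since the order, exponent, Schur index, splitting fields, vertex field, Fein field and Frobenius--Schur type are all quasiconjugacy invariants (Section 2), it suffices to work with $\psi_\QQ$, $\psi_\CC$, $\psi_\RR$, so there is no loss in forgetting $\KK$.

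First I would fix a generator $a$ of $C_{2v}$ and note that a faithful $\CC C_{2v}$-irrep $\psi_\CC$ must send $a$ to a primitive $2v$-th root of unity $\zeta$; conversely, each such $\zeta$ gives a faithful linear character, and these are exactly the Galois conjugates of $\psi_\CC$, i.e.\ the elements of $\Vtx(\psi)$. Hence $\VV(\psi) = \QQ[\psi_\CC] = \QQ[\zeta] = \QQ_{2v}$. Because $v$ is a power of $2$, the number of primitive $2v$-th roots of unity is $v$, so $v(\psi) = |\Vtx(\psi)| = |\QQ_{2v} : \QQ| = v$, consistent with the identity $v(\psi) = |\VV(\psi):\QQ|$ recorded in Section 2.

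Next, since $\psi_\CC$ is one-dimensional it is absolutely irreducible and is afforded over $\QQ[\zeta] = \VV(\psi)$; equivalently, the Wedderburn component of $\QQ C_{2v}$ cut out by $\psi_\QQ$ is the field $\QQ_{2v}$, so $\End_{\QQ C_{2v}}(\psi_\QQ) = \QQ_{2v}$ is a field and $m(\psi) = 1$. As noted in Section 2, when $m(\psi) = 1$ the vertex field is itself a splitting field, while every splitting field contains $\VV(\psi)$; therefore $\VV(\psi) = \QQ_{2v}$ is the unique minimal splitting field for $\psi_\QQ$. This field is already cyclotomic, so a cyclotomic field $\QQ_n$ splits $\psi_\QQ$ if and only if it contains $\QQ_{2v}$, and the least such $n$ is $2v$; thus $n(\psi) = 2v$. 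Consequently $\QQ_{n(\psi)} = \QQ_{2v} = \VV(\psi)$ is the unique subfield of $\QQ_{n(\psi)}$ that is a minimal splitting field for $\psi_\QQ$, i.e.\ the unique Fein field.

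Finally, for the Frobenius--Schur type I would compute the indicator directly: $\fs(\psi_\CC) = \frac{1}{2v} \sum_{g} \psi_\CC(g^2) = \frac{1}{2v} \sum_g \psi_\CC^2(g)$, where $\psi_\CC^2$ is the linear character $a \mapsto \zeta^2$ of order $v \geq 2$, hence nontrivial, so the sum vanishes and $\fs(\psi_\CC) = 0$; by the dictionary in Section 2 this gives $\Delta(\psi) = \CC$. (Equivalently, $\zeta$ is non-real as $2v \geq 4$, so the character of $\psi_\CC$ is not real-valued.) There is no genuine obstacle here; the only place needing a little care is invoking the Section 2 facts correctly to upgrade the observation ``$\VV(\psi)$ is a splitting field'' to ``$\VV(\psi)$ is the unique minimal splitting field and the unique Fein field''.
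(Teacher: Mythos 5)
Your proof is correct. The paper offers no argument for this lemma at all --- it simply declares it ``obvious'' (and similar to Lemma 5.1), postponing only the vertex-set discussion to Lemma 5.6 --- so your write-up supplies exactly the routine details being omitted: the faithful complex irreps of $C_{2v}$ are the linear characters $a \mapsto \zeta$ with $\zeta$ a primitive $2v$-th root of unity, whence $\VV(\psi)=\QQ_{2v}$, $v(\psi)=\varphi(2v)=v$, $m(\psi)=1$ since the relevant Wedderburn component of $\QQ C_{2v}$ is the field $\QQ_{2v}$, and the splitting-field, exponent, Fein-field and Frobenius--Schur claims follow as you say.
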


%Lemma 5.3
\begin{lem}
Let $\psi$ be a faithful $\KK$-irrep of the
dihedral group $D_{8v}$. Then $v(\psi) = v$
and $n(\psi) = 4v$. The unique minimal
splitting field for $\psi_\QQ$ is the unique
Fein field for $\psi$, and it coincides with
the vertex field $\VV(\psi) = \QQ_{4v}^\RR$.
The Schur index is $m(\psi) = 1$. The
Frobenius--Schur type is $\Delta(\psi) = \RR$.
\end{lem}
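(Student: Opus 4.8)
The statement to prove is Lemma 5.5, describing the invariants of a faithful $\KK$-irrep $\psi$ of the dihedral group $D_{8v}$ (with $v$ a power of $2$, $v \geq 2$). The strategy is to reduce everything to an explicit character computation, exactly as in the earlier examples in Section~3. First I would recall, via Lemma~2.5 and Lemma~2.6, that every faithful $\KK D_{8v}$-irrep is Galois conjugate to a fixed one, so it suffices to analyse one distinguished $\psi$; and that any such $\psi$ overlaps with a faithful $\KK A$-irrep $\xi$ of the cyclic maximal subgroup $A = \la a \ra \cong C_{4v}$. Choosing $\omega = \ex(1/4v)$ and the faithful $\CC A$-irrep $\eta$ with $\eta(a) = \omega$, Lemma~2.5(2) gives $\psi_\CC = \ind_A^{D_{8v}}(\eta)$ as an \emph{absolutely irreducible} complex character (the integer multiple being unity because $\eta$ is absolutely irreducible). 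Hence $m(\psi) = 1$ automatically, and there is no Schur-index subtlety to worry about.

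Next I would write down the character values explicitly. Since $\psi_\CC = \ind_A^{D_{8v}}(\eta)$ and $|D_{8v}:A| = 2$, the standard induced-character formula gives $\psi_\CC(a^k) = \omega^k + \omega^{-k}$ and $\psi_\CC(g) = 0$ for $g \in D_{8v} - A$ (the reflection elements), because conjugation by $b$ sends $a$ to $a^{-1}$ and no power of $a$ is centralised by a reflection. From this, $\QQ[\psi_\CC] = \QQ[\omega + \omega^{-1}] = \QQ_{4v}^\RR$, which by the description of the subfields of $\QQ_{4v}$ given just before the lemma is precisely $\RR \cap \QQ_{4v}$, of degree $v$ over $\QQ$. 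Because $\psi_\CC$ is absolutely irreducible with real character values, it is realisable over $\RR$, so the Frobenius--Schur type is $\Delta(\psi) = \RR$; alternatively one computes $\fs(\psi_\CC) = +1$ directly from $\fs(\psi_\CC) = \tfrac{1}{8v}\sum_g \psi_\CC(g^2)$, noting $g^2 \in A$ always. The vertex field is then $\VV(\psi) = \QQ[\psi_\CC] = \QQ_{4v}^\RR$, and since $m(\psi) = 1$ we get the order $v(\psi) = |\VV(\psi):\QQ| = v$ by the formulas listed in Section~2. The number of Galois conjugates of $\psi_\CC$ is therefore $v$, matching $v(\psi)$.

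For the exponent and the minimal splitting fields, I would argue as follows. Since $\psi_\CC$ is absolutely irreducible and $\QQ[\psi_\CC] = \QQ_{4v}^\RR \subseteq \QQ_{4v}$, the cyclotomic field $\QQ_{4v}$ splits $\psi_\QQ$; and $n(\psi) \mid 4v$ from $n(D_{8v}) = 4v$. To see $n(\psi) = 4v$ exactly, one checks that no proper cyclotomic subfield $\QQ_{2v}$ can be a splitting field: a splitting field must contain $\QQ[\psi_\CC] = \QQ_{4v}^\RR$, but $\QQ_{4v}^\RR \not\subseteq \QQ_{2v}$ by the subfield analysis preceding the lemma (the element $\omega + \omega^{-1}$ is not fixed by $\gamma$). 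Since $m(\psi) = 1$, the vertex field $\VV(\psi) = \QQ_{4v}^\RR$ is itself a splitting field, hence a minimal one of degree $v = m(\psi)v(\psi) = |\MM:\QQ|$ over $\QQ$; minimality of the degree forces uniqueness of the minimal splitting field \emph{as an isomorphism class}, and because $\QQ_{4v}^\RR$ is subcyclotomic (hence Galois over $\QQ$) it is literally the unique minimal splitting subfield inside $\QQ_{n(\psi)} = \QQ_{4v}$, which is the Fein field.

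\textbf{The main obstacle.}
The only genuinely delicate point is pinning down $n(\psi) = 4v$ rather than merely $n(\psi) \mid 4v$, i.e.\ showing $\QQ_{4v}^\RR$ is not contained in any smaller cyclotomic field $\QQ_n$; this rests entirely on the explicit Galois-theoretic description of the subfields of $\QQ_{4v}$ recorded just before the lemma (that $\QQ_{2v}$ contains every strict subfield of $\QQ_{4v}$ except $\QQ_{4v}^\RR$ and $\QQ_{4v}^\II$, and in particular $\QQ_{4v}^\RR \cap \QQ_{2v} = \QQ_{2v}^\RR = \QQ_{v}^\RR$ has degree $v/2 < v$). Everything else — the induced-character formula, $m(\psi)=1$, the real Frobenius--Schur type — is routine. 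I would defer the vertex set $\Vtx(\psi)$ to a later lemma as the author signals, so it does not enter this proof.
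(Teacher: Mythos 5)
There is a genuine gap at the single step that carries all the weight: the claim that $m(\psi)=1$, equivalently that $\psi_\CC$ is realisable over its character field $\QQ_{4v}^\RR$. You justify this twice, and both justifications are non sequiturs. First, ``$\psi_\CC=\ind_A^{D_{8v}}(\eta)$ is absolutely irreducible, hence $m(\psi)=1$ automatically'': every complex irrep is absolutely irreducible, and this says nothing about the Schur index over $\QQ$. Second, ``absolutely irreducible with real character values, hence realisable over $\RR$'': also false in general. The faithful complex irrep of $Q_{8v}$ refutes both inferences simultaneously --- it is $\ind_A^{Q_{8v}}(\eta)$ with $\eta$ absolutely irreducible, it has the same real character field $\QQ_{4v}^\RR$, yet $m(\psi)=2$ and $\Delta(\psi)=\HH$ (Lemma 5.5 of the paper). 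Your Frobenius--Schur computation $\fs(\psi_\CC)=+1$ does correctly rescue $\Delta(\psi)=\RR$, i.e.\ realisability over $\RR$; but realisability over $\RR$ gives only $m_\RR(\psi)=1$, not $m_\QQ(\psi)=1$: descent from $\RR$ to the number field $\QQ_{4v}^\RR$ is exactly where the Schur-index obstruction lives, and you cannot appeal to Schilling's Theorem since the paper derives that theorem \emph{from} this lemma. Since your later steps (``$\VV(\psi)$ is itself a splitting field, hence the unique minimal one and the Fein field'') all feed off $m(\psi)=1$, the proof does not close.

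The paper closes this gap by brute exhibition: $a\mapsto R(1/4v)$, $b\mapsto B$ is a faithful irreducible matrix representation whose entries $\cs(1/4v)$, $\sn(1/4v)$ lie in $\QQ_{4v}^\RR$, so $\psi$ is affordable over $\QQ_{4v}^\RR$; combined with the fact that the character value $2\cs(1/4v)$ is a primitive element of $\QQ_{4v}^\RR$, this gives $\VV(\psi)=\QQ_{4v}^\RR$ equal to a splitting field, whence $m(\psi)=1$, $\Delta(\psi)=\RR$, and the Fein-field and exponent statements, all in one stroke. Note that the induced-module description only realises $\psi$ over $\QQ_{4v}$, not over $\QQ_{4v}^\RR$, so it cannot substitute for the explicit real form. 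The rest of your outline (reduction to one Galois conjugacy class via Corollary 2.8, the character-field computation, and the identification $n(\psi)=4v$ from the subfield lattice of $\QQ_{4v}$) matches the paper and is sound.
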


\begin{proof}
Plainly, $v(\psi) = v$. Employing the standard
presentation, the group $D_{8v} = \la a, b \ra$
has a faithful irreducible matrix representation
$\psi$ given by $a \mapsto R(1/4q)$ and $b
\mapsto B$. By considering the matrix entries,
we see that $\psi$ is affordable over the field
$\QQ_{4v}^\RR$. Hence $\VV(\psi) \leq
\QQ_{4v}^\RR$. But we must have equality,
because the character value at $a$ is $\psi(a)
= 2 \cs(1/4v)$, which is a primitive element
of $\QQ_{4v}^\RR$. It is clear that $\psi$ has
all the specified properties. All of these
properties are invariant under Galois conjugation.
So, invoking Corollary 2.8, the properties hold
for any faithful $\KK D_{8v}$-irrep.
\end{proof}

%Lemma 5.4
\begin{lem}
Let $\psi$ be a faithful $\KK$-irrep of the
semidihedral group $\SD_{8v}$. Then $v(\psi) = v$
and $n(\psi) = 4v$. The unique minimal splitting
field for $\psi_\QQ$ is the unique Fein field for
$\psi$, and it coincides with the vertex field
$\VV(\psi) = \QQ_{4v}^\II$. The Schur index is
$m(\psi) = 1$. The Frobenius--Schur type is
$\Delta(\psi) = \CC$.
\end{lem}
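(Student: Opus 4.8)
The plan is to follow the proof of the dihedral case (Lemma 5.3): exhibit one concrete faithful absolutely irreducible representation of $\SD_{8v}$ defined over $\QQ_{4v}^\II$, read all the listed invariants off it, and then transport the conclusions to an arbitrary faithful $\KK \SD_{8v}$-irrep using Galois-conjugacy invariance together with Corollary 2.8.

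First I would take the standard presentation $\SD_{8v} = \la a, d : a^{4v} = d^2 = 1, \, dad^{-1} = a^{2v-1} \ra$, set $A = \la a \ra \cong C_{4v}$, and let $\eta$ be the $\KK A$-irrep with $\eta(a) = \omega := \ex(1/4v)$. By Lemma 2.5 there is a faithful absolutely irreducible $\psi = \ind_A^{\SD_{8v}}(\eta)$, and $\psi|_A = \eta \oplus {}^d\eta$ has $\psi(a)$ with eigenvalues $\omega$ and ${}^d\eta(a) = \omega^{2v-1} = -\omega^{-1}$. Diagonalising $\psi(a)$, one finds that in a suitable basis $a \mapsto I(1/4v)$ and $d \mapsto B$; the only relation needing a check is $B \, I(1/4v) \, B^{-1} = I(1/4v)^{2v-1}$, which holds because conjugation by $B$ negates the off-diagonal entries of $I(1/4v)$ and, via $\cs((2v-1)/4v) = -\cs(1/4v)$ and $\sn((2v-1)/4v) = \sn(1/4v)$, this is exactly $I((2v-1)/4v) = I(1/4v)^{2v-1}$.

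Next I would read off the invariants. The entries of $I(1/4v)$ and $B$ lie in $\QQ_{4v}^\II = \QQ[i \, \sn(1/4v)]$, so $\psi$ is affordable over $\QQ_{4v}^\II$ and $\VV(\psi) \leq \QQ_{4v}^\II$; but the character value $\psi(a) = 2 i \, \sn(1/4v)$ is a primitive element of $\QQ_{4v}^\II$, so $\VV(\psi) = \QQ_{4v}^\II$. Since $\psi_\CC$ is thus affordable over its own vertex field, the Schur index is $m(\psi) = 1$, whence $v(\psi) = |\VV(\psi) : \QQ| = v$, and the unique minimal splitting field for $\psi_\QQ$ is $\VV(\psi) = \QQ_{4v}^\II$ (of degree $m(\psi) v(\psi) = v$ over $\QQ$, equal to the centre of the associated Wedderburn component). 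For the exponent, a cyclotomic field $\QQ_m$ splits $\psi_\QQ$ precisely when $\QQ_m \supseteq \VV(\psi) = \QQ_{4v}^\II$, i.e. when $m$ is a multiple of the conductor of $\QQ_{4v}^\II$; that conductor divides $4v$ but cannot be a proper divisor of the $2$-power $4v$ (such a divisor divides $2v$, which would force $\QQ_{4v}^\II \leq \QQ_{2v}$, contradicting $\Fix \la \delta \ra \neq \Fix \la \gamma \ra$), so it equals $4v$ and $n(\psi) = 4v$; since $\VV(\psi) = \QQ_{4v}^\II \leq \QQ_{4v} = \QQ_{n(\psi)}$, this exhibits $\VV(\psi)$ as the unique Fein field. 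Finally, $\QQ[\psi_\CC] = \QQ_{4v}^\II$ is not contained in $\RR$, as it owns the nonzero imaginary number $2 i \, \sn(1/4v)$, so the character of $\psi_\CC$ is not real-valued; hence $\psi_\CC \not\cong \overline{\psi_\CC}$ and the Frobenius--Schur type is $\Delta(\psi) = \CC$.

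To conclude, all the listed quantities are invariant under Galois conjugation, so by Corollary 2.8 --- the faithful $\KK \SD_{8v}$-irreps forming a single Galois conjugacy class --- the stated properties hold for every faithful $\KK \SD_{8v}$-irrep over every characteristic-zero field $\KK$. The step that genuinely differs from the dihedral computation, and the only one calling for real care, is the equality $n(\psi) = 4v$: one has to know that the conductor of $\QQ_{4v}^\II$ is $4v$ and not $2v$, i.e. that $\QQ_{4v}^\II \neq \QQ_{2v}$, for which the description of the subfields of $\QQ_{4v}$ recalled earlier in this section is used. That point, together with the sign in $dad^{-1} = a^{2v-1}$ which replaces the real value $2 \cs(1/4v)$ of Lemma 5.3 by the imaginary value $2 i \, \sn(1/4v)$, is exactly what distinguishes the semidihedral groups from the dihedral ones and, in particular, flips $\Delta(\psi)$ from $\RR$ to $\CC$.
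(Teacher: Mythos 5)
Your proof is correct and follows essentially the same route as the paper, which simply exhibits the faithful matrix representation $a \mapsto I(1/4v)$, $d \mapsto D$ and reads the invariants off the matrix entries and the character value $2i\,\sn(1/4v)$ before invoking Corollary 2.8 (your choice $d \mapsto B$ works equally well, since conjugation by either $B$ or $D$ sends $I(1/4v)$ to $I((2v-1)/4v)$). The extra details you supply --- the verification of the defining relations, the conductor argument for $n(\psi)=4v$, and the non-real character value giving $\Delta(\psi)=\CC$ --- are all sound and are exactly what the paper leaves implicit in the phrase ``the argument is similar to the proof of the previous lemma.''
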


\begin{proof}
The argument is similar to the proof of the
previous lemma. Note that $\SD_{8v}$ has a
faithful irreducible matrix representation
the standard generators $a$ and $d$ to the
matrices $I(1/4q)$ and $D$, respectively.
\end{proof}

%Lemma 5.5
\begin{lem}
Let $\psi$ be a faithful $\KK$-irrep of the
quaternion group $Q_{8v}$. Then $v(\psi) = v$
and $n(\psi) = 4v$. The vertex field is
$\VV(\psi) = \QQ_{4v}^\RR$. The unique Fein field
for $\psi$ is $\QQ_{4v}$. Two non-isomorphic
minimal splitting fields for $\psi_\QQ$ are
$\QQ_{4v}$ and $\QQ_{8v}^\II$. The Schur index
is $m(\psi) = 2$. The Frobenius--Schur type is
$\Delta(\psi) = \HH$.
\end{lem}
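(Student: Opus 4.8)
The plan is to follow the pattern established in the proofs of Lemmas 5.3 and 5.4, but with the extra wrinkle that the Schur index is now $2$, so the vertex field and the minimal splitting fields must be distinguished. First I would write down an explicit complex matrix representation of $Q_{8v}$: using the standard presentation $Q_{8v} = \la a, x \ra$ with $a^{4v} = 1$, $x^2 = a^{2v}$, $xax^{-1} = a^{-1}$, send $a \mapsto R(1/4v)$ and $x \mapsto X$. A direct check shows $X^2 = -I = R(1/2)$, which matches $x^2 = a^{2v}$, and $X R(1/4v) X^{-1} = R(-1/4v)$. This representation is faithful and absolutely irreducible of degree $2$, so $v(\psi) = v$ (the number of Galois conjugates) and $n(\psi) = 4v$ follow exactly as before, since the character value $\psi(a) = 2\cs(1/4v)$ generates $\QQ_{4v}^\RR$ and vanishing-off arguments via Lemma 2.5 pin down the exponent.

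Next I would compute the vertex field $\VV(\psi) = \QQ[\psi_\CC] = \QQ[\psi]$. Since $\psi$ vanishes off $\la a \ra$ (by Lemma 2.5(2) and the induced-character formula, as $\psi = \ind_{\la a\ra}^{Q_{8v}}(\eta)$ for a faithful $\KK\la a\ra$-irrep $\eta$), and on $\la a \ra$ we have $\psi(a^k) = \omega^k + \omega^{-k}$, the character values generate $\QQ[\omega + \omega^{-1}] = \QQ_{4v}^\RR$. So $\VV(\psi) = \QQ_{4v}^\RR$, and hence $v(\psi) = |\VV(\psi):\QQ| = v$, consistent with the above. The Frobenius--Schur type is obtained from the indicator: $\fs(\psi_\CC) = -1$ because $Q_{8v}$ is quaternionic (the unique faithful complex irrep is quaternionic, a classical fact; one can also compute $\fs$ directly, noting $\sum_{g} \psi_\CC(g^2)$ picks up the squares, half of which land on $a^{2v} = -1$ of the identity). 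Thus $\Delta(\psi) = \HH$ and $m_\QQ(\psi) = m_\RR(\psi) = 2$, with $m(\psi) v(\psi) = |\MM:\QQ|$ for a minimal splitting field $\MM$.

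Now for the minimal splitting fields. A field $\LL \supseteq \QQ$ splits $\psi_\QQ$ iff $\LL$ is a splitting field for the Wedderburn component of $\QQ Q_{8v}$ associated to $\psi_\QQ$, which is a quaternion algebra over $\VV(\psi) = \QQ_{4v}^\RR$ (a $4$-dimensional division algebra with centre $\QQ_{4v}^\RR$ whose class in $\mathrm{Br}(\QQ_{4v}^\RR)$ has order $2$). A quadratic extension of $\QQ_{4v}^\RR$ splits this algebra iff it embeds as a maximal subfield. The candidates are the three quadratic extensions of $\QQ_{4v}^\RR$ inside $\QQ_{8v}$, namely $\QQ_{4v} = \QQ_{4v}^\RR[\omega]$ (adjoining a square root of $\cs(1/4v)^2 - 1 = -\sn(1/4v)^2$, i.e. of a negative element), $\QQ_{8v}^\RR$ (real, adjoining $\sqrt{2+2\cs(1/4v)}$ or the like), and $\QQ_{8v}^\II$. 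Since the division algebra is ramified at the real places (it is $\HH$ over $\RR$), a splitting field must be totally complex over $\QQ_{4v}^\RR$; this rules out the real extension $\QQ_{8v}^\RR$ and leaves $\QQ_{4v}$ and $\QQ_{8v}^\II$ as the two non-isomorphic candidates, both of degree $2v = m(\psi)v(\psi)$ over $\QQ$, hence minimal. I expect the main obstacle to be the clean verification that $\QQ_{8v}^\II$ genuinely splits the algebra while $\QQ_{8v}^\RR$ does not --- the cleanest route is a local argument: the Hasse invariants of the quaternion algebra are nontrivial exactly at the (necessarily even number of) real and dyadic places, and one checks that $\QQ_{8v}^\II/\QQ_{4v}^\RR$ is non-split (complex) at precisely those places, whereas one can exhibit $\QQ_{4v}$ as a maximal subfield directly from the matrix model by finding an element of $\QQ Q_{8v}$ generating $\QQ_{4v}$ inside the Wedderburn component. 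That a subcyclotomic minimal splitting field exists and equals $\QQ_{4v}$ is the ``Fein field'' assertion, to be reconciled in Theorem 5.7; here it suffices to observe $\QQ_{4v} \leq \QQ_{n(\psi)} = \QQ_{4v}$ splits $\psi_\QQ$. Finally, as in Lemma 5.3, all stated invariants are Galois-conjugation invariant, so by Corollary 2.8 the conclusions pass from this particular faithful $\KK Q_{8v}$-irrep to every faithful $\KK Q_{8v}$-irrep.
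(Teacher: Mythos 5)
Your proposal has two concrete problems. First, the matrix model you start from is not a representation of $Q_{8v}$: the matrices $R(1/4v)$ and $X = R(1/4)$ are both rotations, hence commute, so $X R(1/4v) X^{-1} = R(1/4v)$, not $R(-1/4v)$ as you claim; the group they generate is cyclic. The representation the paper uses sends $a \mapsto S(1/4v)$, where $S(t)$ has diagonal entries $\cs(t)$ and off-diagonal entries $i\,\sn(t)$; one checks $X S(t) X^{-1} = S(-t)$ and $X^2 = S(1/2) = -I$. This is not merely cosmetic: the entries of $S(1/4v)$ lie in $\QQ_{4v}$ (since $\cs(1/4v) = (\omega+\omega^{-1})/2$ and $i\,\sn(1/4v) = (\omega-\omega^{-1})/2$), which is exactly how the paper shows $\QQ_{4v}$ is a splitting field; your broken model cannot deliver this, and your fallback appeal to ``finding an element of $\QQ Q_{8v}$ generating $\QQ_{4v}$ inside the Wedderburn component'' is not carried out. (Brauer's splitting theorem would also do, as the paper notes.)

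Second, and more seriously, the claim that $\QQ_{8v}^\II$ is a splitting field is left as a sketch of a local--global argument --- you would need to determine the exact set of ramified places of the quaternion algebra over $\QQ_{4v}^\RR$ and verify that none of them splits in $\QQ_{8v}^\II$, and you flag this yourself as ``the main obstacle'' without resolving it. Your elimination of $\QQ_{8v}^\RR$ (ramification at the real places) only shows which candidates \emph{cannot} split; it does not show that $\QQ_{8v}^\II$ \emph{does}. The paper sidesteps all of this by exhibiting a second explicit matrix model $a \mapsto A_{1/8v}(1/4v)$, $x \mapsto X$, using the matrices $A_s(t)$ introduced at the start of Section 5 precisely for this purpose; the entries of $A_{1/8v}(1/4v)$ generate $\QQ_{8v}^\II$, and minimality is then automatic because $\QQ_{8v}^\II$ is a quadratic extension of the vertex field over which $\psi$ is not affordable (since $\Delta(\psi) = \HH$). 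The remaining parts of your argument --- the vertex field from character values, $\fs = -1$, $m(\psi)=2$, non-isomorphism of the two fields inside the abelian extension $\QQ_{8v}$, and the reduction via Corollary 2.8 --- agree with the paper and are fine.
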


\begin{proof}
By Corollary 2.8 again, we may assume that
$\psi$ is the faithful $\CC Q_{4v}$-irrep such
that $\psi(a^r) = \omega^r + \omega^{-r} =
2 \cs(r/4v)$ for $r \in \ZZ$. There is a
matrix representation of $\psi$ such that
the standard generators $a$ and $x$ act as
$S(1/4q)$ and $X$, respectively. From the
character values, we see that $\VV(\psi) =
\QQ_{4v}^\RR$. Since the dihedral groups are
the only non-abelian finite groups with a
faithful representation on the Euclidian
plane, $\psi$ is not affordable over $\RR$.
(Alternatively, we can observe that
$\sum_{f \in A} \psi(f^2) = 0$ and $\psi(g^2)
= \psi(a^{2q}) = -2$ for $g \in Q_{4v} - \la
a \ra$, whence $\fs(\psi) = -1$.) Perforce,
$\psi$ is not affordable over $\QQ_{4v}^\RR$.
On the other hand, by considering the matrix
entries of $S(1/4q)$, we see that $\psi$ is
affordable over $\QQ_{4v}$. (Alternatively,
we can appeal to Brauer's Splitting Theorem.)
The quadratic extension $\QQ_{4v}$ of
$\QQ_{4v}^\RR$ must be a minimal splitting
field for $\psi$. It follows that $n(\psi) =
4v$ and $\QQ_{4v}$ is the unique Fein field of
$\psi$. These observations also imply that
$m(\psi) = |\QQ_{4v} : \QQ_{4v}^\RR| = 2$ and
$\Delta(\psi) = \HH$.

By direct calculation, it is easy to check that
$\psi$ has another matrix representation given by
$a \mapsto A_{1/8v}(1/4v)$ and $x \mapsto X$. The
field generated by the matrix entries of
$A_{1/8v}(1/4v)$ is $\QQ[\cs(1/4v), \sn(1/4v),
i \, \cs(1/8v), i \, \sn(1/8v)] = \QQ_{8v}^\II$,
and this must be a minimal splitting field
because it is a quadratic extension of the
vertex field. The two minimal splitting fields
that we have mentioned are non-isomorphic because
they are distinct subfields of the cyclotomic
field $\QQ_{8v}$, whose Galois group over $\QQ$
is abelian.
\end{proof}

We now discuss the vertex sets for the faithful
irreps of the Roquette $2$-groups. We continue
to assume that $v$ is a power of $2$ with
$v \geq 2$. Below, we shall find that, if
$p = 2$ and if $\psi$ is a $\KK G$-irrep with
order $v(\psi) = v$, then there are precisely
four possibilities for the genotype $\Typ(\psi)$,
namely $C_{2v}$, $D_{8v}$, $\SD_{8v}$, $Q_{8v}$.
To what extent can we distinguish between these
four possibilities by considering Galois actions
on the vertices? Recall, from Section 2, that the
vertex set $\Vtx(\psi)$ is a permutation set for
the Galois group of a sufficiently large Galois
extension of $\QQ$. The question will reduce to
a consideration of the Roquette $2$-groups. Let
$\psi_C$, $\psi_D$, $\psi_S$, $\psi_Q$ be
faithful $\KK$-irreps of $C_{2v}$, $D_{8v}$,
$\SD_{8v}$, $Q_{8v}$, respectively. Since
$n(\psi) = 2v$, we can regard $\Vtx(\psi_C)$ as
a permutation set for the Galois group
$\Aut(\QQ_{2v}) = \Gal(\QQ_{2v}/\QQ)$. More
generally, we can regard $\Vtx(\psi)$ as a
permutation set for $\Aut(\QQ_n)$ where $n$ is
any multiple of $2v$. Meanwhile, since $n(\psi_D)
= n(\psi_S) = n(\psi_Q) = 4v$, we can regard
$\Vtx(\psi_D)$ and $\Vtx(\psi_S)$ and
$\Vtx(\psi_Q)$ as permutation sets for
$\Aut(\QQ_{4v})$ and, more generally, as
permutation sets for $\Aut(\QQ_n)$ where $n$
is now any multiple of $4v$.
In view of these observations, we put
$n = 4v$. We regard all four vertex sets
$\Vtx(\psi_C)$, $\Vtx(\psi_D)$, $\Vtx(\psi_S)$,
$\Vtx(\psi_Q)$ as $\Aut(\QQ_{4v})$-sets. As we
noted in Section 2, all four of them are
transitive. Since $\Aut(\QQ_{4v})$ has size
$2v$ and since the four vertex sets all have
size $v$, the four point-stabilizer subgroups
all have size $2$. Of course, since
$\Aut(\QQ_{4v})$ is abelian, any transitive
$\Aut(\QQ_{4v})$-set has a unique
point-stabilizer subgroup.

%Lemma 5.6
\begin{lem}
With the notation above, the vertex sets
$\Vtx(\psi_C)$ and $\Vtx(\psi_D)$ and
$\Vtx(\psi_S)$ and $\Vtx(\psi_Q)$ are
transitive $\Aut(\QQ_{4v})$-sets, and the
point-stabilizer subgroups are $\la \gamma \ra$
and $\la \beta \ra$ and $\la \delta \ra$ and
$\la \beta \ra$, respectively.
\end{lem}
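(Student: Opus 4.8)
The plan is to deduce this lemma entirely from the vertex-field computations in the four preceding lemmas (on the cyclic, dihedral, semidihedral and quaternion Roquette $2$-groups) together with the description of the intermediate fields of $\QQ_{4v}/\QQ$ set up in the paragraphs just above.

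First I would observe that, for each of $G = C_{2v}$, $D_{8v}$, $\SD_{8v}$, $Q_{8v}$, the group exponent $n(G)$ divides $4v$, so by Brauer's Splitting Theorem $\QQ_{4v}$ splits $G$; in particular every vertex $\chi$ of the relevant faithful irrep $\psi$ is absolutely irreducible with all character values in $\QQ_{4v}$. By Proposition 2.3 and the discussion of Galois conjugacy in Section 2, $\Aut(\QQ_{4v})$ then acts on $\Vtx(\psi)$ via $({}^\alpha \chi)(g) = {}^\alpha(\chi(g))$. Transitivity of this action follows from the transitivity of the $\Aut(\VV(\psi))$-action on $\Vtx(\psi)$ recalled in Section 2 (equivalently, from Corollary 2.8): since $\VV(\psi)$ is subcyclotomic, hence Galois over $\QQ$, the restriction homomorphism $\Aut(\QQ_{4v}) \to \Aut(\VV(\psi))$ is surjective and the $\Aut(\QQ_{4v})$-action factors through it.

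The core step is the identification of the point-stabilizer. Since an irreducible complex representation is determined up to isomorphism by its character and every vertex has value field $\QQ[\chi] = \VV(\psi) = \QQ[\psi_\CC]$, the stabilizer in $\Aut(\QQ_{4v})$ of a vertex $\chi$ is precisely the set of $\alpha$ fixing every value $\chi(g)$, that is, $\Gal(\QQ_{4v}/\VV(\psi))$. Now I would simply substitute the vertex fields computed in the four preceding lemmas: $\VV(\psi_C) = \QQ_{2v} = \Fix\la\gamma\ra$, $\VV(\psi_D) = \QQ_{4v}^\RR = \Fix\la\beta\ra$, $\VV(\psi_S) = \QQ_{4v}^\II = \Fix\la\delta\ra$, and $\VV(\psi_Q) = \QQ_{4v}^\RR = \Fix\la\beta\ra$. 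Reading off the Galois correspondence for $\QQ_{4v}$ established above, the corresponding stabilizers $\Gal(\QQ_{4v}/\VV(\psi))$ are $\la\gamma\ra$, $\la\beta\ra$, $\la\delta\ra$, $\la\beta\ra$, exactly as claimed. As a consistency check, each of these subgroups has order $2$, which matches $|\Aut(\QQ_{4v})| = 2v$ against $|\Vtx(\psi)| = v(\psi) = v$ (the value $v(\psi) = v$ again coming from the same four lemmas).

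There is no genuine obstacle here: all the real work sits in the four preceding lemmas. The only point that deserves a moment's care is the identification of the stabilizer with $\Gal(\QQ_{4v}/\VV(\psi))$, which rests on two facts recalled in Section 2, namely that a vertex of $\psi$ is an absolutely irreducible character with value field $\VV(\psi)$, and that distinct vertices have distinct characters.
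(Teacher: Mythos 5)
Your proposal is correct and follows essentially the same route as the paper: identify the point-stabilizer of a vertex with the subgroup of $\Aut(\QQ_{4v})$ fixing the vertex field $\VV(\psi)$ pointwise, substitute the vertex fields computed in the four preceding lemmas, and read off the subgroups via the Galois correspondence for $\QQ_{4v}/\QQ$. The extra care you take over transitivity and the well-definedness of the action is harmless but not needed beyond what Section 2 already provides.
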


\begin{proof}
We apply the Fundamental Theorem of Galois
Theory to the Galois group $\Aut(\QQ_{4v})$
of the field extension $\QQ_{4v}/\QQ$. The
subgroups $\la \gamma \ra$ and $\la \beta \ra$
and $\la \delta \ra$ and $\la \beta \ra$ are
the centralizers of the subfields $\VV(\psi_C)
= \QQ_{2v}$ and $\VV(\psi_D) = \QQ_{4v}^\RR$
and $\VV(\psi_S) = \QQ_{4v}^\II$ and
$\VV(\psi_Q) = \QQ_{4v}^\RR$, respectively.
\end{proof}

To begin the slow movement, let us recall some
obligations from Section 2. There, we listed
some invariants of a $\KK G$-irrep $\psi$, and
we stated that they are genetic invariants. We
also stated that there exists a unique Fein field
for $\psi$. We indicated that we would recover
Schilling's Theorem. We stated that the vertex set
$\Vtx(\psi)$ is the maximum field that embeds
in every splitting field for $\psi_\QQ$. In the
next few results, we shall prove those assertions.

%Theorem 5.7
\begin{thm}
Let $\psi$ be a $\KK G$-irrep. Let $\LL/\JJ$ be
a characteristic zero field extension. Then the
$\LL/\JJ$-relative Schur index $m_\JJ^\LL(\psi)$,
the $\LL/\JJ$-relative order $v_\JJ^\LL(\psi)$,
the $\JJ$-algebra isomorphism class of the
endomorphism ring $\End_{\JJ G}(\psi)$, the class
of minimal splitting fields for $\psi_\JJ$ and
the $\JJ$-relative vertex field $\VV_\JJ(\psi)$
are genetic invariants of $\psi$. In particular,
$m_\JJ(\psi)$, $v_\JJ(\psi)$, $m(\psi)$,
$v(\psi)$ and $\VV(\psi)$ are genetic invariants.
There exists a unique Fein field $\Fein(\psi)$.
Furthermore, $\Fein(\psi)$ is a genetic invariant.
The $\Aut(\VV(\psi))$-set isomorphism class of
vertex set $\Vtx(\psi)$, the Frobenius--Schur
type $\Delta(\psi)$ and the genotype $\Typ(\psi)$
are genetic invariants.
\end{thm}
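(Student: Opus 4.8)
The plan is to reduce everything to two facts that are already available: the Field-Changing Theorem 3.5 (tight induction is detected after passing to $\QQ$, and is insensitive to field extensions), and the Shallow/Narrow Lemmas 3.1, 3.2, which tell us that a tight induction $\psi=\ind_H^G(\phi)$ preserves $\End_{\KK G}$ up to $\KK$-algebra isomorphism and preserves the field $\QQ[\psi]=\QQ[\phi]$. The first thing I would do is dispose of the quasiconjugacy global character of every invariant in the list: this was already argued in the running text of Section 2, so here I would only cite it and move on, concentrating on \emph{tight induction invariance}, which is the genuinely new content.

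\medskip
The core step is to show that each listed invariant is preserved under a single tight induction step $\psi=\ind_H^G(\phi)$; transitivity of tight induction (Remark 3.3) then gives the general case. For the endomorphism ring $\End_{\JJ G}(\psi_\JJ)$ this is immediate from Lemma 3.1 applied over $\JJ$ (legitimate, by the Field-Changing Theorem, since tightness over $\KK$ implies tightness over $\QG$ and hence over $\JJ$). Once the endomorphism rings are isomorphic $\JJ$-algebras, the Schur index $m_\JJ(\psi)$, the relative order $v_\JJ(\psi)$, the $\JJ$-relative vertex field $\VV_\JJ(\psi)$ (the centre of that division ring), and the class of minimal splitting fields (the splitting fields of that division ring, of minimal degree) are all determined by that isomorphism class, hence are preserved. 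The relative versions $m_\JJ^\LL(\psi)$, $v_\JJ^\LL(\psi)$ come along for free by the same token, using part (c) of Theorem 3.4 to match up the $\LL$-irreps on both sides. Specializing $\JJ=\QQ$, $\JJ=\RR$ gives $m(\psi)$, $v(\psi)$, $\VV(\psi)$ and $\Delta(\psi)$; for $\Delta(\psi)$ one may alternatively note $\fs(\psi)=m_\KK(\psi)v_\KK(\psi)\fs(\psi_\CC)$ and invoke the multiplicativity of $\fs$ under tight induction, but routing it through $\End_{\RR G}$ is cleaner.

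\medskip
Three items need a separate word. The vertex set $\Vtx(\psi)$ as an $\Aut(\VV(\psi))$-set: by Corollary 3.6 a tight induction forces $\VV(\psi)=\VV(\phi)$, and by part (c) of the Field-Changing Theorem the vertices of $\psi$ correspond bijectively to the vertices of $\phi$ via $\psi'\leftrightarrow\ind_H^G(\phi')$; since the Galois action on vertices is by acting on character values, and induction of characters commutes with the Galois action, this bijection is $\Aut(\VV(\psi))$-equivariant, giving an isomorphism of permutation sets. The exponent $n(\psi)$ and the Fein field: here I must first \emph{establish existence and uniqueness} of $\Fein(\psi)$, i.e.\ that among the splitting fields for $\psi_\QQ$ contained in $\QQ_{n(\psi)}$ there is a unique one of minimal degree. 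The uniqueness is the subtle point, and I expect it to be the main obstacle: it amounts to showing that the set of subfields of $\QQ_{n(\psi)}$ that split $\psi_\QQ$ has a unique minimal element. Since $\Gal(\QQ_{n(\psi)}/\QQ)$ is abelian, subfields correspond to subgroups, and "splits $\psi_\QQ$" is an upward-closed condition on subfields, hence a downward-closed condition on subgroups; but a downward-closed family of subgroups of an abelian group need not have a unique maximal member in general — what saves us is that, by the Brauer-Witt type analysis underlying the Roquette reduction (equivalently, by the explicit computations in Lemmas 5.1--5.6 for the Roquette case plus the genetic reduction), the splitting condition is governed by a single cyclotomic obstruction, so the family of splitting subgroups \emph{is} closed under joins and has a unique maximal element. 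I would prove uniqueness first in the Roquette case by inspecting Lemmas 5.2--5.6 (where the Fein field is exhibited outright: $\QQ_{2v}$, $\QQ_{4v}^{\RR}$, $\QQ_{4v}^{\II}$, $\QQ_{4v}$ respectively), then transport it along a genetic subquotient using $\QQ[\psi]=\QQ[\phi]$, $n(\psi)=n(\phi)$ and the fact that tight induction preserves the lattice of splitting fields. With existence and uniqueness in hand, genetic invariance of $\Fein(\psi)$ and of $n(\psi)$ follows from the already-established genetic invariance of the class of minimal splitting fields together with the observation that $n(\psi)$ is intrinsic to the germ.

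\medskip
Finally, genetic invariance of $\Typ(\psi)$ is essentially a restatement of the Genotype Theorem 1.1: if $\psi=\ind_H^G(\phi)$ tightly and $H'/K'$ is a genetic subquotient of $\phi$ with germ $\phi'$, then by transitivity of tight induction $H'/K'$ is a genetic subquotient of $\psi$ with germ $\phi'$, so $\Typ(\psi)=H'/K'\cong\Typ(\phi)$; invariance under Galois conjugacy, change of field and group isomorphism is immediate since each of those operations carries genetic subquotients to genetic subquotients (for change of field, this is exactly the content of the first paragraph of Section 4). I would present the argument in the order: (i) quote quasiconjugacy-global invariance from Section 2; (ii) endomorphism ring via Lemma 3.1, then derive $m,v,\VV$ and the minimal splitting fields; (iii) relative versions and $\Delta$; (iv) the vertex set as an equivariant permutation set; (v) existence/uniqueness of the Fein field (the hard step) and its invariance, together with $n(\psi)$; (vi) $\Typ(\psi)$ from the Genotype Theorem. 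The one place where real work is required, rather than bookkeeping, is step (v).
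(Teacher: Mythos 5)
Your proposal is correct and follows essentially the same route as the paper: reduce tight-induction invariance of the numerical and field-theoretic invariants to the genetic invariance of the endomorphism ring via the Shallow Lemma, handle the vertex set via the Narrow Lemma, establish the Fein field by reduction to the Roquette case and Lemmas 5.1--5.5, and get $\Typ(\psi)$ from Remark 3.3 and the Field-Changing Theorem. The only cosmetic difference is that you apply the Shallow Lemma over $\JJ$ directly, whereas the paper fixes the single invariant $\End_{\QQ G}(\psi_\QQ)$ and derives all the $\JJ$- and $\LL/\JJ$-relative quantities from it by extension of scalars.
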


\begin{proof}
Obviously, $\Typ(\psi)$ is a global invariant.
By the Field-Changing Theorem 3.5, the genetic
subquotients for $\psi$ coincide with the
genetic subquotients for $\psi_\QQ$. Therefore
$\Typ(\psi)$ is a quasiconjugacy invariant.
Given a subgroup $L \leq G$ and a $\KK L$-irrep
$\theta$ from which $\psi$ is tightly induced,
then, by Remark 3.3, every genetic subquotient
for $\theta$ is a genetic subquotient for
$\psi$. Therefore $\Typ(\psi)$ is a tight
induction invariant. We have shown that
$\Typ(\psi)$ is a genetic invariant.

Let us write $[\End_\QQ]$ to denote the
isomorphism class of the ring $\End_\QQ =
\End_{\QQ G}(\psi_\QQ)$. As we already noted
in Section 2, $[\End_\QQ]$ is a quasiconjugacy
global invariant. By the definition of shallow
induction, $[\End_\QQ]$ is a tight induction
invariant. So $[\End_\QQ]$ is a genetic invariant.

The $\JJ$-algebra $\JJ \otimes_\QQ \End_\QQ$
is isomorphic to a direct sum of $v_\JJ(\psi)$
copies of the ring of $m_\JJ(\psi) \times
m_\JJ(\psi)$ matrices over the $\JJ$-algebra
$\End_\JJ = \End_{\JJ G}(\JJ \psi_\QQ)$. So
$[\End_\QQ]$ determines $v_\JJ(\psi)$ and
$m_\JJ(\psi)$. Furthermore, $[\End_\QQ]$
determines the isomorphism class of $\End_\JJ$
and, in particular, the isomorphism class of
$\End_\RR = \Delta_\psi$. The $\LL$-algebra
$\LL \otimes_\JJ \End_\JJ$ is isomorphic to a
direct sum of $v_\JJ^\LL(\psi)$ copies of the
ring of $m_\JJ^\LL(\psi) \times m_\JJ^\LL(\psi)$
matrices over $\End_\LL$. So $[\End_\QQ]$
determines $v_\JJ^\LL(\psi)$ and
$m_\JJ^\LL(\psi)$. We have $\VV_\JJ(\psi)
\cong Z(\End_\JJ)$, so $[\End_\QQ]$ determines
$\VV_\JJ(\psi)$. The splitting fields
for $\psi_\JJ$ are precisely the splitting
fields for $\End_\JJ$. So $[\End_\QQ]$
determines the class of minimal splitting
fields for $\psi_\JJ$. It follows that
$[\End_\QQ]$ determines the $n(\psi)$ and
the class of Fein fields for $\psi$. As
$\Aut(\VV(\psi))$-sets, $\Vtx(\psi)$ is
isomorphic to the set of Wedderburn
components of the semisimple ring
$\VV(\psi) \otimes_\QQ \End_\QQ$. So
$[\End_\QQ]$ determines $\Vtx(\psi)$.
With the exception of $\Typ(\psi)$, all
the specified invariants are thus determined
by the genetic invariant $[\End_\QQ]$, hence
they are genetic invariants. By the way, an
easier way to see the genetic invariance of
$\Vtx(\psi)$ is to observe that the
quasiconjugacy global invariance is obvious,
while the tight induction invariance is
immediate from condition (b) in the Narrow
Lemma 3.2.

It remains only to demonstrate the existence
and uniqueness of the Fein field. Let $H/K$
be a genetic subquotient of $\psi$ and let
$\phi$ be the germ of $\psi$ at $H/K$. Since
the class of Fein fields is a genetic
invariant, the Fein fields of $\psi$ coincide
with the Fein fields of $\phi$. Replacing
$\psi$ with $\phi$, we reduce to the case
where $G$ is Roquette and $\psi$ is faithful.
If $G = C_1$ or $G = C_2$, then the unique
$\QQ$-relative Fein field is $\Fein(\psi) =
\QQ$. When $|G| \geq 3$, the existence and
uniqueness of $\Fein(\psi)$ was already shown
in Lemmas 5.1, 5.2, 5.3, 5.4, 5.5.
\end{proof}

The argument in the last paragraph of the proof
can be abstracted in the form of the following
remark.

%Remark 5.8
\begin{rem}
Let $\psi$ be a $\KK G$-irrep. Let $\JJ$ be a
field with characteristic zero, and let $\phi$
be a faithful $\JJ \Typ(\psi)$-irrep. Let $\cI$
be an invariant defined on characteristic zero
irreps of finite $p$-groups. If $\cI$ is a
genetic invariant, then $\cI(\psi) = \cI(\phi)$.
\end{rem}

Thus, any genetic invariant is determined by the
genotype. Conversely, the latest theorem tells
us that the genotype is a genetic invariant.
The following corollary is a restatement of
those two conclusions.

%Corollary 5.9 
\begin{cor}
For irreps of finite $p$-groups over a field with
characteristic zero, the genetic invariants are
precisely the isomorphism invariants of the
genotype.
\end{cor}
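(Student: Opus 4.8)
The plan is to observe that Corollary 5.9 is a pure bookkeeping consequence of Theorem 5.7 and Remark 5.8, together with Corollary 2.8, so the entire task is to interpret the phrase ``isomorphism invariant of the genotype'' correctly and then verify the two inclusions of the asserted equality of invariant classes.

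First I would fix terminology: a formal invariant $\cI$, defined on irreps of finite $p$-groups over characteristic zero fields, is an \emph{isomorphism invariant of the genotype} if $\cI(\psi)$ depends only on the isomorphism class of the abstract group $\Typ(\psi)$. For the inclusion that every such $\cI$ is a genetic invariant, the argument is immediate from Theorem 5.7: the genotype $\Typ(\psi)$ is itself a genetic invariant, that is, it is preserved by group isomorphism, change of coefficient field, Galois conjugacy, and tight induction. Hence any function of $\Typ(\psi)$ inherits all four of these preservation properties, so $\cI$ is a tight quasiconjugacy global invariant, i.e.\ a genetic invariant.

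For the reverse inclusion, suppose $\cI$ is a genetic invariant. By Remark 5.8, for any characteristic zero field $\JJ$ and any faithful $\JJ\Typ(\psi)$-irrep $\phi$ one has $\cI(\psi)=\cI(\phi)$. I would then argue that this common value depends only on the isomorphism class of $\Typ(\psi)$: it is independent of $\JJ$ because $\cI$ is a change-of-field invariant; it is independent of the choice of faithful $\phi$ because, by Corollary 2.8, the faithful irreps of a Roquette $p$-group form a single Galois conjugacy class and $\cI$ is a Galois conjugacy invariant; and it is independent of the representative chosen for the isomorphism class of $\Typ(\psi)$ because $\cI$ is a global invariant. Therefore $\cI(\psi)$ is a function of $\Typ(\psi)$ up to isomorphism, so $\cI$ is an isomorphism invariant of the genotype. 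Combining the two inclusions gives the corollary.

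The main obstacle is not mathematical depth, of which essentially none remains once Theorem 5.7 and Remark 5.8 are available, but rather the careful handling of quantifiers: one must check that the well-definedness of the value $\cI(\phi)$ across the whole isomorphism class is genuinely forced by the three preservation hypotheses, with Corollary 2.8 supplying the single nontrivial input, namely the uniqueness up to Galois conjugacy of a faithful germ of a Roquette $p$-group.
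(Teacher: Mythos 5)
Your proposal is correct and matches the paper's own reasoning: the paper presents Corollary 5.9 as a restatement of Remark 5.8 (any genetic invariant is determined by the genotype, with Corollary 2.8 and the Field-Changing Theorem supplying the well-definedness you spell out) together with the fact from Theorem 5.7 that the genotype is itself a genetic invariant. Your more explicit verification of the quantifiers is just a careful unpacking of the same argument.
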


The field $\Fein(\psi)$ need not be the only
minimal splitting field for $\psi$ contained
in $\QQ_{|G|}$. Lemma 5.5 shows that every
quaternion $2$-group is a counter-example.
For a complex irrep $\chi$ of an arbitrary
finite group $F$, the splitting field
$\QQ_{|G|}$ need not contain a minimal
splitting field for $\chi$. Fein \cite{Fei74}
gave a counter-example where $|F|$ has
precisely three prime factors. In the same
paper, he showed that, if $|F|$ has precisely
two prime factors and if $\chi$ has Schur
index $m(\chi) \geq 3$, then $\QQ_n$ contains
a minimal splitting field, where $n$ is the
exponent of $G$.
However, as we are about to show, the condition
$m(\chi) \geq 3$ always fails when $F$ is a
$p$-group. The line of argument by which we
arrive at the following celebrated result is
due to Roquette \cite{Roq58}, but it is worth
assimilating into our account because it is
a paradigm for the genetic reduction technique.

%Theorem 5.10
\begin{thm}
{\rm (Schilling's Theorem)} Given a
$\KK G$-irrep $\psi$ and a field extension
$\LL/\JJ$ with characteristic zero, then
$m_\JJ^\LL(\psi) \leq 2$. If $m_\JJ^\LL(\psi)
= 2$ then $\Delta(\psi) = \HH$. If
$\Delta(\psi) = \HH$, then $m(\psi) = 2$.
\end{thm}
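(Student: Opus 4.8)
The plan is to reduce everything to the case where $G$ is Roquette and $\psi$ is faithful, and then to read off all three assertions from Roquette's Classification Theorem 2.4 together with the computations of Lemmas 5.1--5.5. By Theorem 5.7, the quantities $m_\JJ^\LL$, $\Delta$ and $m = m_\QQ$ are genetic invariants. Hence, applying Remark 5.8 with the auxiliary field taken to be $\JJ$ itself, we obtain $m_\JJ^\LL(\psi) = m_\JJ^\LL(\phi)$, $\Delta(\psi) = \Delta(\phi)$ and $m(\psi) = m(\phi)$, where $T = \Typ(\psi)$ and $\phi$ is a faithful $\JJ T$-irrep. So there is no loss of generality in assuming that $G = T$ is Roquette and $\psi$ is faithful.

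The only input beyond this reduction is the elementary fact that extension of the coefficient field of a Wedderburn component cannot increase its Schur index: restriction is a homomorphism of Brauer groups, and since $\VV_\JJ(\psi)$ is subcyclotomic the algebra $\VV_\JJ(\psi) \otimes_\JJ \LL$ splits into a product of fields, over each of which the extended component has index dividing that of the original. Combined with the Galois transitivity of Theorem 2.2, which forces all the $\LL G$-irreps in $\LL\psi$ to share a common Schur index, this gives that $m_\JJ^\LL(\psi)$ divides $m_\JJ(\psi)$, which divides $m_\QQ(\psi) = m(\psi)$. Thus it suffices to prove that, for a faithful irrep $\psi$ of a Roquette $p$-group, $m(\psi) \leq 2$, that $m(\psi) = 2$ implies $\Delta(\psi) = \HH$, and that $\Delta(\psi) = \HH$ implies $m(\psi) = 2$.

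Next I would run through the Roquette families. For $C_1$ and $C_2$ the faithful irrep is one-dimensional with rational character, so $m = 1$ and $\Delta = \RR$. For $C_{p^m}$ with $p$ odd, and for $C_{2v}$, $D_{8v}$, $\SD_{8v}$, $Q_{8v}$ with $v \geq 2$, the values of $m$ and $\Delta$ are precisely those recorded in Lemmas 5.1, 5.2, 5.3, 5.4, 5.5: the first four families give $m = 1$ and $\Delta \in \{\RR, \CC\}$, while the quaternion family gives $m = 2$ and $\Delta = \HH$. The one remaining group $Q_8$ is classical: the Wedderburn component of $\QQ Q_8$ attached to the faithful irrep is the rational quaternion division algebra, so again $m = 2$ and $\Delta = \HH$. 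In every case $m(\psi) \leq 2$, whence $m_\JJ^\LL(\psi) \leq 2$; moreover $m(\psi) = 2$ occurs exactly when the genotype is a quaternion group, in which case $\Delta(\psi) = \HH$. Conversely $\Delta(\psi) = \HH$ forces a quaternion genotype, that being the only Roquette family with quaternionic Frobenius--Schur type, and then $m(\psi) = 2$. Feeding these facts back through the divisibility $m_\JJ^\LL(\psi) \mid m(\psi)$ gives $m_\JJ^\LL(\psi) = 2 \Rightarrow m(\psi) = 2 \Rightarrow \Delta(\psi) = \HH$, and all three statements follow.

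Essentially all the genuine content is already packaged inside Theorem 5.7 and Lemmas 5.1--5.5, so I do not anticipate a serious obstacle; the main work is the genetic reduction plus bookkeeping over four families. The one point needing a little care rather than mere citation is the passage from $m(\psi) \leq 2$ to the relative bound $m_\JJ^\LL(\psi) \leq 2$, that is, the behaviour of the Schur index under the arbitrary characteristic-zero extension $\LL/\JJ$; but this is handled by the standard divisibility of Schur indices under scalar extension together with the Galois picture of Theorem 2.2.
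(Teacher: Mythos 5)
Your proposal is correct and follows essentially the same route as the paper: reduce to a faithful irrep of a Roquette group via Theorem 5.7 and Remark 5.8, bound $m_\JJ^\LL(\psi)$ by $m(\psi)$, and finish by the case analysis of Lemmas 5.1--5.5. The only cosmetic difference is that the paper gets $m_\JJ^\LL(\psi) \leq m(\psi)$ directly from the tower relation $m_\QQ^\LL(\psi) = m_\QQ^\JJ(\psi)\, m_\JJ^\LL(\psi)$ rather than from your Brauer-group divisibility argument, and your separate treatment of $Q_8$ (which Lemma 5.5 formally excludes) is if anything slightly more careful than the paper's.
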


\begin{proof}
By the latest theorem and the subsequent
remark, we may assume that $G$ is Roquette
and that $\psi$ is faithful. From the definition
of the relative Schur index, $m_\QQ^\LL(\psi) =
m_\QQ^\JJ(\psi) m_\JJ^\LL(\psi)$. So
$m_\JJ^\LL(\psi) \leq m_\QQ^\LL(\psi) \leq
m(\psi)$. It suffices to show that $m(\psi)
\leq 2$ with equality if and only if
$\Delta(\psi) = \HH$. Applying Lemmas 5.1,
5.2, 5.3, 5.4, and attending separately to
the degenerate case $|G| \leq 2$, we deduce
that if $G$ is cyclic, dihedral or semidihedral,
then $m(\psi) = 1$ and $\Delta(\psi) \neq \HH$.
If $G$ is quaternion then, by Lemma 5.5,
$m(\psi) = 2$ and $\Delta(\psi) = \HH$.
\end{proof}

The next result is probably of no technical
interest, but it does at least indicate why we
call $\VV(\psi)$ the vertex field. However, the
analogous assertion can fail for the relative
vertex field: if $G$ is a quaternion $2$-group
then $\VV_\RR(\psi) = \RR$, but the unique
minimal splitting field for $\psi$ is $\CC$.

%Proposition 5.11
\begin{pro}
Let $\psi$ be a $\KK G$-irrep. Partially
ordering isomorphism classes of fields by
embedding, then (the isomorphism class of)
$\VV(\psi)$ is the unique maximal field 
that embeds in all the minimal splitting
fields of $\psi$.
\end{pro}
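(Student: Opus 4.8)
The proposition packages two claims: that $\VV(\psi)$ embeds in every minimal splitting field of $\psi$, and conversely that any field embedding in all the minimal splitting fields of $\psi$ embeds in $\VV(\psi)$. Together these exhibit $\VV(\psi)$ as the greatest element --- hence the unique maximal element --- of the poset of fields embeddable in every minimal splitting field. The plan is to read off the first claim from material already assembled in Section 2, and to obtain the second by reducing to the Roquette case via genetic invariance and then separating according to the value of $m(\psi)$.

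For the first claim, let $\MM$ be any splitting field for $\psi_\QQ$. By the first of the three equivalent descriptions of the vertex field recalled in Section 2, $\VV(\psi) = \QQ[\psi_\MM]$, the field generated over $\QQ$ by the character values of the absolutely irreducible $\MM G$-irrep $\psi_\MM$. Since those values lie in $\MM$, we get $\VV(\psi) \leq \MM$, and in particular $\VV(\psi)$ embeds in every minimal splitting field. (Alongside this one records that, for $\MM$ minimal, $|\MM : \VV(\psi)| = m(\psi) \leq 2$ by Schilling's Theorem.)

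For the second claim, observe that by Theorem 5.7 both $\VV(\psi)$ and the class of minimal splitting fields for $\psi_\QQ$ are genetic invariants, so Remark 5.8 lets me replace $\psi$ by a faithful $\QQ\Typ(\psi)$-irrep; from now on $G$ is Roquette and $\psi$ is faithful. If $m(\psi) = 1$ --- which, by Lemmas 5.1--5.4 together with the trivial cases $C_1$ and $C_2$, is precisely what happens when $G$ is cyclic, dihedral or semidihedral --- then those lemmas identify $\VV(\psi)$ as the unique minimal splitting field, and a field embedding in all minimal splitting fields embeds in particular in $\VV(\psi)$. The remaining case is $m(\psi) = 2$; here Schilling's Theorem gives $\Delta(\psi) = \HH$, and, $m(\psi)$ being $1$ for every non-quaternion Roquette group, $G$ is a quaternion group $Q_{8v}$. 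Then Lemma 5.5 exhibits $\QQ_{4v}$ and $\QQ_{8v}^\II$ as two minimal splitting fields, both lying in the cyclotomic field $\QQ_{8v}$ (for the one group $Q_8$ not covered by that lemma, one checks by hand that the rational quaternion algebra is split by $\QQ_4 = \QQ[i]$ and by $\QQ_8^\II = \QQ[\sqrt{-2}]$).

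To finish, let $E$ embed in every minimal splitting field of $\psi$; then $E$ embeds in $\QQ_{4v}$ and in $\QQ_{8v}^\II$, so $E$ is subcyclotomic and, $\QQ_{8v}/\QQ$ being abelian, has a unique image inside $\QQ_{8v}$, whence $E \leq \QQ_{4v} \cap \QQ_{8v}^\II$ as subfields of $\QQ_{8v}$. By the Fundamental Theorem of Galois Theory this intersection is the fixed field of the subgroup of $\Aut(\QQ_{8v}) \cong (\ZZ/8v)^\times$ generated by the involutions $\sigma : \omega \mapsto \omega^{1+4v}$ and $\tau : \omega \mapsto \omega^{4v-1}$, which generate $\Gal(\QQ_{8v}/\QQ_{4v})$ and $\Gal(\QQ_{8v}/\QQ_{8v}^\II)$ respectively; since $\sigma\tau : \omega \mapsto \omega^{16v^2 - 1} = \omega^{-1}$ is complex conjugation, that fixed field is the real subfield $\QQ_{4v}^\RR$, which is $\VV(\psi)$ by Lemma 5.5. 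Hence $E \leq \VV(\psi)$, as required. The step I expect to be the crux is this quaternion case: one needs the explicit pair of minimal splitting fields in hand and must verify that their largest common subfield collapses all the way to $\VV(\psi)$, even though $\QQ_{4v}$ by itself properly contains $\VV(\psi)$.
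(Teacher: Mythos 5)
Your proof is correct and follows the same route as the paper's: reduce to the faithful Roquette case via Theorem 5.7 and Remark 5.8, then read the assertion off Lemmas 5.1--5.5. The paper leaves the quaternion case as ``clear''; your explicit check that $\QQ_{4v} \cap \QQ_{8v}^{\II} = \QQ_{4v}^{\RR}$ (together with the subcyclotomic/Galois observation that makes the intersection of images well-defined) is precisely the computation that claim of clarity presupposes.
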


\begin{proof}
By the latest theorem and remark, we may
assume that $G$ is Roquette and that $\psi$
is faithful. The assertion is now clear
from Lemmas 5.1, 5.2, 5.3, 5.4, 5.5.
\end{proof}

We shall end this movement by showing that
the genotype $\Typ(\psi)$ of a non-trivial
$\KK G$-irrep $\psi$ is determined by the ring
$\End_{\QQ G}(\psi_\QQ)$, and the genotype
is also determined by the class of minimal
splitting fields for $\psi_\QQ$. Note that,
aside from the genotype itself, none of the
genetic invariants listed in Theorem 5.7
can be used to distinguish between genotype
$C_1$ and genotype $C_2$. But those two
genotypes can be distinguished very easily:
a Frobenius reciprocity argument shows that
$\Typ(\psi) = C_1$ if and only if $\psi$ is
the trivial $\KK G$-irrep.

The following corollary relies on the Genotype
Theorem 1.1. Indeed, it relies on Theorem 5.7.
Although we did not mention the Genotype Theorem
in the above proof of Theorem 5.7, we implicitly
used the Genotype Theorem because our argument
involved $\Typ(\psi)$, whose existence and
uniqueness is guaranteed by the Genotype Theorem.
However, the reasoning that has led us to the
following corollary makes essential use only of
the existence, not the uniqueness. The existence
of $\Typ(\psi)$ is captured in Theorem 4.1,
which was proved by a direct argument in
Section 4. So, with the following corollary, we
complete a direct proof of the Genotype Theorem,
avoiding the reduction to the special case
$\KK = \QQ$ or $\KK = \CC$.

%Corollary 5.12
\begin{cor}
Let $\psi$ be a non-trivial $\KK G$-irrep. Let
$\psi'$ be a non-trivial $\KK G'$-irrep, where
$G'$ is a finite $p'$-group and $p'$ is a prime.
Then the following conditions are equivalent.

\smallskip
\noin {\bf (a)} $\Typ(\psi) = \Typ(\psi')$.

\smallskip
\noin {\bf (b)} $\End_{\QQ G}(\psi_\QQ) \cong
\End_{\QQ G'}(\psi'_\QQ)$.

\smallskip
\noin {\bf (c)} The minimal splitting fields for
$\psi$ coincide with the minimal splitting fields
for $\psi'$.
\end{cor}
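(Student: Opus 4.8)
The plan is to prove the three conditions equivalent by reducing each to a statement about the genotype via the results already established, and then checking a small finite list of Roquette $2$-groups (the odd-$p$ case being trivial). Since $\psi$ and $\psi'$ are non-trivial, their genotypes are non-trivial Roquette $p$-groups, respectively $q$-groups; in particular neither is $C_1$.

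First I would dispose of (a)$\Rightarrow$(b) and (a)$\Rightarrow$(c). By Remark 5.8, the ring $\End_{\QQ G}(\psi_\QQ)$ depends only on $\Typ(\psi)$ up to isomorphism, and likewise the class of minimal splitting fields for $\psi_\QQ$ depends only on $\Typ(\psi)$; the same holds for $\psi'$. Hence if $\Typ(\psi)\cong\Typ(\psi')$, then both (b) and (c) follow at once. Similarly, (b)$\Rightarrow$(c) is immediate because, as shown in the proof of Theorem 5.7, the isomorphism class $[\End_{\QQ G}(\psi_\QQ)]$ determines the class of minimal splitting fields for $\psi_\QQ$ (they are precisely the splitting fields of $\End_{\QQ G}(\psi_\QQ)$). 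So the real content is to show that either (b) or (c) forces (a); equivalently, that $\Typ(\psi)$ is recoverable from $[\End_{\QQ G}(\psi_\QQ)]$ alone, and also from the class of minimal splitting fields alone.

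By Remark 5.8 again, it suffices to work with a faithful $\QQ$-irrep of a Roquette $p$-group, so I would reduce to comparing the genetic invariants across the classification list. If $p$ is odd, Lemma 5.1 shows that a non-trivial genotype $C_{p^m}$ with $m\ge 1$ has $\End_{\QQ G}(\psi_\QQ)=\QQ_{p^m}$ (a field), with $\QQ_{p^m}$ the unique minimal splitting field; since the prime $p$ and the exponent $m$ are read off from this field, the genotype is determined, and moreover the odd-$p$ case is visibly disjoint from the $p=2$ case (in the latter, by Lemmas 5.2--5.5, the minimal splitting fields are always subcyclotomic for an exponent that is a power of $2$). For $p=2$ the genotype with $v(\psi)=v$ a power of $2$ is one of $C_{2v}$, $D_{8v}$, $\SD_{8v}$, $Q_{8v}$ (plus the two small cases $C_1$, $C_2$, the former already excluded). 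From Lemmas 5.2--5.5: the endomorphism ring is the field $\QQ_{2v}$, $\QQ_{4v}^\RR$, $\QQ_{4v}^\II$ in the first three cases and a quaternion algebra over $\QQ_{4v}^\RR$ in the quaternion case; and the minimal splitting fields are $\{\QQ_{2v}\}$, $\{\QQ_{4v}^\RR\}$, $\{\QQ_{4v}^\II\}$, $\{\QQ_{4v},\QQ_{8v}^\II,\dots\}$ respectively. The invariant $v$ is recovered from $[\End_{\QQ G}(\psi_\QQ)]$ (as $\dim_\QQ$ of the centre) or from any minimal splitting field (its degree over $\QQ$ times $m(\psi)$, with $m(\psi)=2$ exactly in the quaternion case, detectable either by the ring not being a field or by there being more than one minimal splitting field). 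Given $v$, the three subcyclotomic fields $\QQ_{2v}$, $\QQ_{4v}^\RR$, $\QQ_{4v}^\II$ are pairwise non-isomorphic (distinct subfields of the abelian extension $\QQ_{8v}/\QQ$), so the dihedral and semidihedral cases are told apart from each other and from the cyclic case, and the quaternion case is singled out by $m(\psi)=2$.

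The main obstacle I anticipate is purely bookkeeping: making sure the finitely many Roquette $2$-groups with the same order $v$ genuinely have pairwise distinct endomorphism-ring isomorphism classes and pairwise distinct classes of minimal splitting fields. This rests entirely on the non-isomorphism of $\QQ_{2v}$, $\QQ_{4v}^\RR$, $\QQ_{4v}^\II$ as abstract fields, which follows because they are three distinct intermediate subfields of $\QQ_{8v}/\QQ$ and that extension is Galois with abelian Galois group, so distinct subfields are never conjugate, hence never isomorphic over $\QQ$; being subcyclotomic, isomorphic would force equality. One should also observe that the odd-$p$ families $C_{p^m}$ never collide with the $2$-group families: for odd $p$ the unique minimal splitting field $\QQ_{p^m}$ has degree $p^{m-1}(p-1)$, which is even but not a power of $2$ for $m\ge 2$, and equals $p-1$ (again not, in general, matching the $2$-power degrees forced on the $2$-group side) — more simply, $\End_{\QQ G}(\psi_\QQ)$ being a field of non-$2$-power degree over $\QQ$, or a quaternion algebra, immediately pins down whether $p=2$. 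Assembling these observations yields (b)$\Rightarrow$(a) and (c)$\Rightarrow$(a), and the cycle of implications closes.
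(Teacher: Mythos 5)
Your proof is correct and follows essentially the same route as the paper: (a)$\Rightarrow$(b) by the genetic invariance of $\End_{\QQ G}(\psi_\QQ)$, (b)$\Rightarrow$(c) because the minimal splitting fields of $\psi_\QQ$ are those of its endomorphism ring, and the converse by reducing to the Roquette case and reading off Lemmas 5.1--5.5, with non-isomorphism of the relevant fields coming from their being distinct subfields of an abelian (cyclotomic) extension of $\QQ$. Your extra bookkeeping is sound except for the throwaway ``more simply'' remark that a non-$2$-power degree pins down $p$ --- for $p$ odd and $m=1$ the degree $p-1$ can be a power of $2$ (e.g.\ $p=5$) --- but your primary argument via distinctness of subcyclotomic fields already covers this, so nothing is lost.
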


\begin{proof}
Since the endomorphism ring
$\End_{\QQ G}(\psi_\QQ)$ is a genetic invariant,
it is isomorphic to the endomorphism ring of the
faithful rational irrep of $\Typ(\psi)$. So (a)
implies (b). The minimal splitting fields for
$\psi$ are precisely the minimal splitting fields
for $\End_{\QQ G}(\psi_\QQ)$. So (b) implies (c).
Suppose that (c) holds. To deduce (a), the latest
theorem and remark allow us to assume that $G$
and $G'$ are Roquette. If $\QQ$ is a splitting
field for $\psi$ and $\psi'$, then $\Typ(\psi)
= C_2 = \Typ(\psi')$. Otherwise, the equality
of the two genotypes follows from the first
five lemmas in this section.
\end{proof}

Finally, we are ready to present the synthesis
of the material in the previous two movements.
Corollary 5.12 is unlikely to be of much use
towards evaluating the genotype of an explicitly
given irrep. The following theorem can be
applied first to evaluate the genotype from
more easily ascertained genetic invariants. The
genotype having been evaluated, the theorem can
be applied again to evaluate other genetic
invariants. (The above proof of Schilling's
Theorem can be cast in that form. Anyway, we
are not suggesting that anyone would actually
wish to evaluate genetic invariants for
numerically specified irreps. It can be argued
that, in pure mathematics no less than in the
other sciences, a sufficient criterion for
meaningful content should be only that the
material could be applied efficiently and
effectively to some natural class of problems;
without requiring that there be any demand for
the solutions to those problems.)

%Theorem 5.13
\begin{thm}
Let $\psi$ be a $\KK G$-irrep and let
$v = v(\psi)$. First suppose that $v = 1$.
Then precisely one of the following three
conditions holds.

\smallskip
\noin {\bf (a)} $\Typ(\psi) = C_1$ and $\psi$
is the trivial $\KK G$-irrep.

\smallskip
\noin {\bf (b)} $\Typ(\psi) = C_2$ and $\psi$ is
non-trivial, affordable over $\QQ$ and absolutely
irreducible. In particular, the Schur index is
$m(\psi) = 1$ and the Frobenius--Schur
type is $\Delta(\psi) = \RR$.

\smallskip
\noin {\bf (c)} $\Typ(\psi) = Q_8$ and $m(\psi)
= 2$ and $\Delta(\psi) = \HH$.

\smallskip
\noin Now suppose that $p$ is odd and $v \neq 1$.
Then the exponent $n = n(\psi)$ is a power of
$p$ and $v = n(1-1/p)$. Also, $m(\psi) = 1$ and
$\Delta(\psi) = \CC$. The unique minimal splitting
field for $\psi$ is the field $\Fein(\psi) =
\VV(\psi) = \QQ_n$. The vertex set $\Vtx(\psi)$
is free and transitive as an $\Aut(\QQ_n)$-set.

\smallskip
\noin Now suppose that $p = 2$ and
$v \neq 1$. Then $v$ is a power of $2$ and
precisely one of the following conditions holds.

\smallskip
\noin {\bf (C)} $\Typ(\psi) = C_{2v}$ and $n(\psi)
= 2v$ and $m(\psi) = 1$ and $\Delta(\psi) = \RR$.
The unique minimal splitting field for $\psi$ is
the field $\Fein(\psi) = \VV(\psi) = \QQ_{2v}$.
As a transitive $\Aut(\QQ_{4v})$-set, $\Vtx(\psi)$
has point-stabilizer subgroup $\la \gamma \ra$.

\smallskip
\noin {\bf (D)} $\Typ(\psi) = D_{8v}$ and $n(\psi)
= 4v$ and $m(\psi) = 1$ and $\Delta(\psi) = \RR$.
The unique minimal splitting field for $\psi$ is
the field $\Fein(\psi) = \VV(\psi) = \QQ^\RR_{4v}$.
As a transitive $\Aut(\QQ_{4v})$-set, $\Vtx(\psi)$
has point-stabilizer subgroup $\la \beta \ra$.

\smallskip
\noin {\bf (S)} $\Typ(\psi) = \SD_{8v}$ and $n(\psi)
= 4v$ and $m(\psi) = 1$ and $\Delta(\psi) = \CC$.
The unique minimal splitting field for $\psi$ is
the field $\Fein(\psi) = \VV(\psi) = \QQ^\II_{4v}$.
As a transitive $\Aut(\QQ_{4v})$-set, $\Vtx(\psi)$
has point-stabilizer subgroup $\la \delta \ra$.

\smallskip
\noin {\bf (Q)} $\Typ(\psi) = Q_{8v}$ and $n(\psi)
= 4v$ and $m(\psi) = 2$ and $\Delta(\psi) = \HH$.
Two non-isomorphic splitting fields for $\psi$ are
$\QQ_{4v}$ and $\QQ_{8v}^\II$. Also, $\Fein(\psi)
= \QQ_{4v}$ and $\VV(\psi) = \QQ^\RR_{4v}$. As a
transitive $\Aut(\QQ_{4v})$-set, $\Vtx(\psi)$
has point-stabilizer subgroup $\la \beta \ra$.
\end{thm}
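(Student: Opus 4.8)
The plan is to reduce at once to the Roquette case via Remark 5.8 and then simply quote the explicit computations already made in this section, paying separate attention to the three small genotypes $C_1$, $C_2$ and $Q_8$ that lie outside the hypotheses of Lemmas 5.1--5.6.

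First I would observe that, by Theorem 5.7, every quantity named in the statement --- $n(\psi)$, $m(\psi)$, $\Delta(\psi)$, $\Fein(\psi)$, $\VV(\psi)$, the class of minimal splitting fields, the $\Aut(\VV(\psi))$-set $\Vtx(\psi)$, and $\Typ(\psi)$ itself --- is a genetic invariant, and so is $v(\psi)$. Hence, by Remark 5.8, each of these invariants of $\psi$ agrees with the corresponding invariant of a faithful $\KK\,\Typ(\psi)$-irrep $\phi$. Replacing $\psi$ by $\phi$, I may assume $G$ is Roquette and $\psi$ is faithful, and by Roquette's Classification Theorem 2.4 the group $G$ is then $C_{p^m}$, $Q_{2^m}$, $D_{2^m}$ or $\SD_{2^m}$.

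The argument then splits according to the isomorphism type of $G$, using the order $v = v(\psi)$ as the sorting key. When $v = 1$, equivalently $\VV(\psi) = \QQ$, the order formulas from Lemmas 5.1--5.6 (a faithful irrep of $C_{p^m}$ has order $p^m - p^{m-1}$, and a faithful irrep of $D_{2^m}$, $\SD_{2^m}$ or $Q_{2^m}$ has order $2^{m-3}$) show that the only Roquette $p$-groups with a faithful irrep of order $1$ are $C_1$, $C_2$ and $Q_8$. For $C_1$ a Frobenius reciprocity argument identifies $\psi$ with the trivial irrep; for $C_2$ the irrep is the nontrivial rational one-dimensional character, hence absolutely irreducible with $m(\psi) = 1$ and $\Delta(\psi) = \RR$; and for $Q_8$ a direct inspection of its unique faithful two-dimensional irrep, whose rational Wedderburn component is the rational quaternion division algebra, gives $m(\psi) = 2$ and $\Delta(\psi) = \HH$. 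When $p$ is odd and $v \neq 1$, only the cyclic groups survive, and Lemma 5.1 applied to $C_{p^m}$ with $p^m \geq 3$ furnishes all the stated equalities, in particular $n(\psi) = p^m$ and $v = n(\psi)(1 - 1/p)$. When $p = 2$ and $v \neq 1$, those same order formulas make $v$ a power of $2$ with $v \geq 2$, and matching $v$ against the formulas forces $G$ to be exactly one of $C_{2v}$, $D_{8v}$, $\SD_{8v}$, $Q_{8v}$; Lemmas 5.2, 5.3, 5.4 and 5.5 then deliver the conclusions of cases (C), (D), (S), (Q), while Lemma 5.6 provides the point-stabilizer subgroups $\la \gamma \ra$, $\la \beta \ra$, $\la \delta \ra$, $\la \beta \ra$ of the respective vertex sets. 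That at most one of the enumerated alternatives holds follows because the genotypes in each list are pairwise non-isomorphic: $C_1$, $C_2$, $Q_8$ by order, and in the $p = 2$, $v \neq 1$ list the abelian $C_{2v}$ is set apart while $D_{8v}$, $\SD_{8v}$, $Q_{8v}$ differ in their numbers of involutions.

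Since the substantive calculations are packaged in Lemmas 5.1--5.6, the only genuine work left is bookkeeping: confirming that the three degenerate genotypes $C_1$, $C_2$, $Q_8$ (not covered by those lemmas) behave as claimed, and checking that the order of the faithful irrep truly partitions the Roquette $p$-groups into precisely the four families, so that exactly one of the listed conditions holds. I expect the small group $Q_8$ and this final enumeration to be the only points requiring anything beyond quotation.
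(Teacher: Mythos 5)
Your proposal is correct and follows essentially the same route as the paper, whose entire proof reads ``The case $|\Typ(\psi)| \leq 2$ is easy. The rest follows from the first eight results in this section.'' You have simply made the intended bookkeeping explicit --- the reduction via Theorem 5.7 and Remark 5.8 to a faithful irrep of a Roquette group, the sorting by $v(\psi)$, and the separate treatment of $C_1$, $C_2$ and $Q_8$ (the last of which indeed falls outside the $v \geq 2$ hypothesis of Lemma 5.5 and needs the direct inspection you describe).
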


\begin{proof}
The case $|\Typ(\psi)| \leq 2$ is easy. The
rest follows from the first eight results
in this section.
\end{proof}

When $p = 2 \leq v$, one routine for calculating
the genotype is to find the values of $v$ and
$\fs(\psi)$. If $\fs(\psi) = 0$ then the
possible genotypes $C_{2v}$ and $\SD_{8v}$
can be distinguished using the fact that,
in the former case, the involution fixing
$\psi$ is $\gamma$ while, in the latter
case, the involution fixing $\psi$ is $\delta$.
Another routine is to evaluate $\VV(\psi)$ and,
if necessary, $\fs(\psi)$.

To reinforce the point, let us return, once
again, to the real irrep $\chi$ of the
group $\DD = \DD_{2^n} = \DD_{16u}$. We
evaluated $\Typ(\psi)$ already at the end of
Section 4, but let us now do it more swiftly.
Using part (3) of Lemma 2.5, we see that
$v(\psi) = u/2$. By considering the partition
$$\DD = A \cup (D_{8u} - A) \cup (\Mod_{8u}
  - A) \cup (\SD_{8u} - A)$$
we see that $\fs(\chi) = 1$ and $\Delta(\chi)
= \RR$. We recover the conclusion that
$\Typ(\chi) = D_{2^{n-2}}$ if $n \geq 6$
while $\Typ(\chi) = C_2$ if $n = 5$.

For another example, still with $n \geq 5$,
suppose that $G$ is the smash product
$C_4 * D_{2^{n-1}}$, which has order $2^n$.
Each faithful complex irrep of the subgroup
$1 * D_{2^{n-1}}$ extends to two complex
conjugate irreps of $G$. So there are
precisely $2^{n-3}$ faithful $\CC G$-irreps,
and they comprise a single Galois conjugacy
class. Let $\psi$ be a faithful $\KK G$-irrep.
Then $v(\psi) = 2^{n-3}$. The two generators
of $C_4 * 1$ act on $\psi_\CC$ as scalar
multiplication by $\pm i$, and the faithful
complex irreps of $1 * D_{2^{n-1}}$ have
vertex field $\QQ_{2^{n-2}}^\RR$, hence
$$\VV(\psi) = \QQ_{2^{n-2}}^\RR[i] =
\QQ_{2^{n-2}} = \QQ_{2v(\psi)} \, .$$
Therefore, $\Typ(\psi) = C_{2^{n-2}}$. At
the beginning of Section 2, we noted that
the unique faithful $\QQ C_4 * D_{16}$-irrep
is induced from a $C_4$ subquotient and also
from a $C_8$ subquotient; we have made some
progress since then, and we can now announce
that, actually, the genotype is $C_8$.

%Section 6
\section{Counting Galois conjugacy classes
of irreps}

We shall be correlating some results of
tom Dieck \cite[III.5.9]{Die87} and Bouc
\cite[8.5, 8.7]{Bou*}. They were concerned
with the cases where $\KK$ is $\QQ$ or $\RR$
or $\CC$. Our generalizations to the case of
arbitrary $\KK$ are slender, although we should
point out that the construction of $\ooR(\KK G)$,
below, does rely on the notion of Galois
conjugacy that was established in Section 2.
What is of more interest is that we shall be
providing quicker proofs, making use of the
fact that the genetic theory applies directly
to the general case.

%Remark 6.1
\begin{rem}
Let $R$ be a Roquette $p$-group, and let $k_R(G)$
denote the number of Galois conjugacy classes of
$\KK G$-irreps with genotype $R$. Then $k_R(G)$
is independent of $\KK$.
\end{rem}

The remark is immediate from the Field-Changing
Theorem 3.5. We mention that $k_R(G)$ is a global
quasiconjugacy invariant. Letting $k_*(G) = \sum_R
k_R(G)$, where $R$ runs over all the Roquette
$p$-groups, then $k_*(G)$ is the number of Galois
conjugacy classes of $\KK G$-irreps, in other
words, the number of $\QQ G$-irreps, we mean to
say, the number of conjugacy classes of cyclic
subgroups of $G$.

Recall that a {\bf superclass} function for $G$
is a $\ZZ$-valued function $f$ on the set of
subgroups of $G$ such that $f$ is constant on
each conjugacy class of subgroups. The {\bf
superclass ring} of $G$, denoted $C(G)$, is
understood to be the additive group consisting
of the superclass functions on $G$. The {\bf
representation ring} $R(\KK G)$, also called
the {\bf character ring}, is understood
to be the group of virtual $\KK G$-reps (the
universal abelian group associated with the
semigroup of $\KK G$-reps). Of course, in many
well-known applications, $C(G)$ and $R(\KK G)$
are assigned all sorts of further structures (in
particular, they are rings) but those further
structures are irrelevant to our concerns. We
shall be regarding $C(G)$ and $R(\KK G)$ merely
as free abelian groups.

We define the {\bf tom Dieck map}
$$\Die_G^\KK \: : \: R(\KK G) \rightarrow C(G)$$
to be the linear map such that, given a
$\KK G$-rep $\xi$ and a subgroup $H \leq G$,
then the value of $\Die_G^\KK(\xi)$ at $H$ is
equal to the multiplicity of the trivial
$\KK H$-irrep in the restriction $\res_H^G(\xi)$.
Thus, treating $\xi$ as a $\KK G$-module and
writing its $H$-fixed subspace as $\xi^H$, we
have $\Die_G^\KK(\xi)(H) = \dim_\KK(\xi^H)$.
Let $I(\KK G)$ be the subgroup of $R(\KK G)$
generated by the elements having the form
$\xi - \xi'$, where $\xi$ and $\xi'$ are Galois
conjugate $\KK G$-reps. We write $\ooxi = \xi +
I(\KK G)$ as an element of the quotient group
$\ooR(\KK G) = R(\KK G)/I(\KK G)$. Since
$\Die_G^\KK$ annihilates $I(\KK G)$, we can
define another {\bf tom Dieck map}
$$\ooDie_G^\KK \: : \: \ooR(\KK G)
  \rightarrow C(G)$$
such that $\ooDie_G^\KK(\ooxi) = \Die_G^\KK(\xi)$.

Letting $\psi_1$, $...$, $\psi_r$ be a set of
representatives of the Galois conjugacy classes
of $\KK G$-irreps, then $\{ \oopsi_1, ...,
\oopsi_r \}$ is a $\ZZ$-basis for $\ooR(\KK G)$,
and $\{ \overline{(\psi_1)_\QQ}, ...,
\overline{(\psi_r)_\QQ} \}$ is a $\ZZ$-basis for
$\ooR(\QQ G)$. We mention that there is a
isomorphism $\ooR(\KK G) \rightarrow \ooR(\QQ G)$
such that $\oopsi_j \mapsto \overline{(\psi_j)_\QQ}$.
But the isomorphism does not commute with the tom
Dieck maps $\ooDie_G^\KK$ and $\ooDie_G^\QQ$.

The next two results are due to tom Dieck
\cite[III.5.9, III.5.17]{Die87}. Our slight
embellishment is to extend to the case where $\KK$
is arbitrary. Although the proofs presented below
are different from tom Dieck's, the ideas are
implicit in \cite[III.5]{Die87}.

%Theorem 6.2
\begin{thm}
{\rm (tom Dieck)} The tom Dieck map
$\ooDie_G^\KK$ is injective, and its image is a
free abelian group whose rank is $k_*(G)$.
\end{thm}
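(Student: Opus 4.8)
The plan is to reduce everything to the already-established theory of genotypes and to a computation on Roquette $p$-groups. First I would fix a set of representatives $\psi_1,\dots,\psi_r$ of the Galois conjugacy classes of $\KK G$-irreps, so that $\{\oopsi_1,\dots,\oopsi_r\}$ is a $\ZZ$-basis for $\ooR(\KK G)$. By Remark 6.1 we have $r = k_*(G)$, the number of conjugacy classes of cyclic subgroups of $G$. Since the image of $\ooDie_G^\KK$ is a subgroup of the free abelian group $C(G)$, it is automatically free abelian, and its rank equals $r$ as soon as $\ooDie_G^\KK$ is injective; so the whole theorem collapses to the single assertion that $\ooDie_G^\KK$ is injective. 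Note also (from the comment just before the theorem) that the isomorphism $\ooR(\KK G) \cong \ooR(\QQ G)$ sending $\oopsi_j \mapsto \overline{(\psi_j)_\QQ}$ does not intertwine the tom Dieck maps, so I cannot simply quote the case $\KK = \QQ$; the genetic machinery is what lets me handle arbitrary $\KK$ uniformly.

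The core of the argument is a triangularity (or ``unitriangular up to Galois orbits'') statement for the matrix of $\ooDie_G^\KK$ in suitable bases. Order the conjugacy classes of cyclic subgroups of $G$ by refining the partial order given by subconjugacy, say $[A_1],\dots,[A_r]$ with $|A_i|$ non-decreasing. To each $\KK G$-irrep $\psi$ I would attach, via the genotype theory of Sections 4 and 5, a distinguished cyclic subgroup: for a Roquette germ $H/K$ of $\psi$, the maximal normal cyclic subgroup of $H/K$ lifts (through $\inf$ and $\ind$) to a cyclic subgroup of $G$, well-defined up to conjugacy by the uniqueness clauses in the Genotype Theorem 1.1 together with Lemma 2.5. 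The key claim is: $\Die_G^\KK(\psi)(A) = \dim_\KK(\psi^A)$ vanishes unless $A$ is subconjugate to this distinguished cyclic subgroup, and for $A$ equal (up to conjugacy) to it the value is nonzero. The vanishing is Frobenius reciprocity applied to a tight-induction expression $\psi = \ind_{H/K}^G(\phi)$: the trivial $\KK A$-irrep occurs in $\res_A^G(\psi)$ only if a $G$-conjugate of $A$ lands inside $H$ with trivial image in $H/K$, forcing $A$ to sit inside (a conjugate of) the preimage of the distinguished cyclic subgroup. The non-vanishing at the distinguished subgroup comes from the fact that faithful irreps of Roquette $p$-groups do not vanish off their maximal cyclic subgroup (Lemmas 2.5, 2.6) combined with the induced-character formula. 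Passing to $\ooR(\KK G)$ is harmless since the distinguished subgroup and these fixed-point multiplicities are Galois conjugacy invariants, so they descend to the classes $\oopsi_j$.

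Granting the claim, I would choose the basis of $\ooR(\KK G)$ and an appropriate spanning family of superclass functions so that the matrix of $\ooDie_G^\KK$ is block-triangular with respect to the ordering by $|A_i|$, with invertible diagonal blocks (each block corresponding to a fixed cyclic subgroup $A$ and to the germs of Roquette type whose maximal cyclic subgroup is $A$; invertibility of the block amounts to Galois-theoretic linear independence of the relevant cyclotomic traces $\sum_\alpha \zeta^{\alpha}$, which is the classical independence of characters of $(\ZZ/n)^\times$). Triangularity with nonsingular diagonal blocks forces injectivity over $\ZZ$. The main obstacle I anticipate is making the ``distinguished cyclic subgroup'' construction genuinely canonical and checking the non-vanishing of the diagonal contributions cleanly — in particular handling the non-absolutely-irreducible cases (the quaternion genotypes, where $m(\psi) = 2$) and verifying that the diagonal block really is invertible over $\ZZ$ and not merely over $\QQ$. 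This is where I would lean hardest on Lemma 2.5, Lemma 2.6, and the explicit computations in Lemmas 5.1--5.6, reducing the delicate part to the four families of Roquette $2$-groups and the cyclic $p$-groups, where everything is explicit.
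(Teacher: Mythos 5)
Your reduction of the theorem to injectivity is fine: the image of $\ooDie_G^\KK$ is a subgroup of the free abelian group $C(G)$, hence free, and has rank $r=k_*(G)$ once injectivity is known; moreover injectivity of a homomorphism between free abelian groups is a rational condition, so your worry about invertibility of diagonal blocks over $\ZZ$ rather than $\QQ$ is moot. The genuine gap is that your central triangularity claim is false. You assert that $\dim_\KK(\psi^A)$ vanishes unless $A$ is subconjugate to the distinguished cyclic subgroup obtained from the maximal cyclic subgroup of a genetic subquotient. Take $G=H=D_{16}$, $K=1$, and $\psi$ a faithful $\CC$-irrep, so the distinguished subgroup would be the cyclic maximal subgroup $\la a\ra$. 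A non-central reflection $b$ has $\psi(b)=0$, hence $\dim_\CC(\psi^{\la b\ra})=\tfrac12(\psi(1)+\psi(b))=1\neq 0$, yet no conjugate of $\la b\ra$ lies in $\la a\ra$. The same failure occurs for semidihedral genotypes (use the standard generator $d$). Worse, for the faithful dihedral irrep the subgroups with nonzero fixed points that are maximal under subconjugacy are the \emph{two} classes of non-central involutions, not a single distinguished class, so the block-triangular structure cannot be rescued by reordering; the square matrix $\bigl(\dim\psi_i^{A_j}\bigr)$ is nonsingular but not triangularizable by your bookkeeping. Your Mackey step is also wrong: the trivial character of $A$ occurs in $\res_A^G\ind_{H/K}^G(\phi)$ whenever some intersection $({}^gA\cap H)K/K$ has nonzero fixed points on the germ $\phi$, and for dihedral and semidihedral germs this happens for subgroups not contained in the preimage of the maximal cyclic subgroup.

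The paper proves the theorem by a quite different minimal-counterexample descent, which you should compare with your plan. After reducing to $\KK=\CC$ via Theorem 2.2, one assumes a nontrivial relation $\sum_j a_j\dim_\CC(\psi_j^H)=0$ for all $H$, with the $\psi_j$ mutually Galois non-conjugate. If some $\psi_j$ is unfaithful, one passes to $G/K$ for a suitable normal $K$, using Clifford theory to discard the irreps not trivial on $K$. If all are faithful and $G$ is not Roquette, one restricts to $T=C_G(E)$ as in Lemma 4.2; the tightness statement there guarantees that the constituents $\psi_{j,i}$ of the restrictions remain mutually Galois non-conjugate, so the relation survives on $T$. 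Finally, for $G$ Roquette with all $\psi_j$ faithful, Corollary 2.8 forces $r=1$, and evaluating at $H=1$ gives the contradiction. If you insist on a basis-and-matrix argument, what you actually need is the classical nonsingularity of the fixed-point pairing between $\QQ G$-irreps and conjugacy classes of cyclic subgroups (essentially Artin induction), which is not a triangularity statement and would have to be imported or proved separately.
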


\begin{proof}
By Theorem 2.2, we may assume that $\KK = \CC$.
Suppose that $\ooDie_G^\CC$ is not injective. Then
$$a_1 \dim_\CC(\psi_1^H) + ... +
  a_r \dim_\CC(\psi_r^H) =
  \Die_G^\CC(a_1 \psi_1 + ... a_r \psi_r) = 0$$
where $\psi_1$, $...$, $\psi_r$ are mutually
Galois non-conjugate $\CC G$-irreps and each
$a_j$ is a non-zero integer. First consider the
case where some $\psi_j$ is non-faithful. Then,
without loss of generality, there is an integer
$s \leq r$ and a non-trivial normal subgroup $K$
of $G$ such that the kernels of $\psi_1$, $...$,
$\psi_s$ all contain $K$ while the kernels of
$\psi_{s+1}$, $...$, $\psi_r$ do not contain
$K$. When $s < j \leq r$, Clifford theory
yields $\dim_\CC(\psi_j^K) = 0$ and, perforce,
$\dim_\CC(\psi_j^H) = 0$ for all intermediate
subgroups $K \leq H \leq G$. Replacing $G$ with
$G/K$, we obtain a contradiction by insisting that
$|G|$ was minimal. Now consider the case where all
the $\psi_j$ are faithful and $G$ is not Roquette.
Let $E$ and $T$ be as in Lemma 4.2. The proof of
that lemma shows that each $\res_T^G(\psi_j) =
\psi_{j,1} + ... + \psi_{j,p}$ as a sum of
mutually Galois non-conjugate $\CC T$-irreps.
But $\psi_j = \ind_T^G(\psi_{j,i})$ and it follows
that, as $j$ and $i$ run over the ranges $1 \leq j
\leq r$ and $1 \leq i \leq p$, the $\CC T$-irreps
$\psi_{j,i}$ are mutually Galois non-conjugate.
Replacing $G$ with $T$, we again obtain a
contradiction by induction on $|G|$. We have
reduced to the case where $G$ is Roquette and
$\psi_1$, $...$, $\psi_r$ are faithful. By
Corollary 2.8, $r = 1$. Absurdly, we deduce that
$\dim_\CC(\psi_1^H) = 0$ for all subgroups
$H \leq G$.
\end{proof}

We write $\mod_2$ to indicate reduction modulo
$2$: for a free abelian group $A$, we write
$\mod_2(A) = (\ZZ/2) \otimes_\ZZ A$; we write
$\mod_2$ for canonical epimorphism $A \rightarrow
\mod_2(A)$. The composite maps $\mod_2 \,
\Die_G^\KK : R(\KK G) \rightarrow \mod_2(C(G))$
and $\mod_2 \, \ooDie_G^\KK : \ooR(\KK G)
\rightarrow \mod_2(C(G))$ are still called
{\bf tom Dieck maps}.

%Theorem 6.3
\begin{thm}
{\rm (tom Dieck)} Suppose that $p=2$. Then the image
$\Im(\mod_2 \, \ooDie_G^\KK)$ is an elementary
abelian $2$-group whose rank is the number of Galois
conjugacy classes of $\KK G$-abirreps with cyclic,
dihedral or semidihedral genotype. These are the
abirreps with Frobenius--Schur type $\RR$ or $\CC$.
\end{thm}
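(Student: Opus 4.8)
The plan is to combine the injectivity result of Theorem 6.2 with a careful analysis of how the tom Dieck values behave modulo $2$, reducing everything (as usual) to the Roquette case via Lemma 4.2 and induction on $|G|$. First I would observe that, since $\mod_2 \, \ooDie_G^\KK = \mod_2 \circ \ooDie_G^\KK$ and $\ooDie_G^\KK$ is injective with image a free abelian group of rank $k_*(G)$, the rank of $\Im(\mod_2 \, \ooDie_G^\KK)$ is $k_*(G)$ minus the number of independent congruences modulo $2$ satisfied by the columns of the matrix of $\ooDie_G^\KK$ in the bases $\{\oopsi_j\}$ and the subgroup-indicator functions. So the combinatorial content is: for which Galois conjugacy classes $\oopsi_j$ is the function $H \mapsto \dim_\KK(\psi_j^H)$ \emph{not} identically even? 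By Theorem 2.2 we may assume $\KK = \CC$, so $\dim_\CC(\psi^H)$ is the multiplicity of the trivial $\CC H$-irrep in $\res_H^G(\psi)$.

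Next I would reduce to faithful irreps of Roquette groups exactly as in the proof of Theorem 6.2. If $\psi$ has a nontrivial kernel $K$, then $\dim_\CC(\psi^H)$ depends only on the image of $H$ in $G/K$, so the parity pattern of $\psi$ as a $\CC G$-irrep equals that of the corresponding $\CC(G/K)$-irrep, and genotype is unchanged; induction on $|G|$ handles this. If all the $\psi_j$ are faithful but $G$ is non-Roquette, take $E$, $T = C_G(E)$ as in Lemma 4.2, so each $\psi_j = \ind_T^G(\phi_j)$ with the $\phi_j$ mutually Galois non-conjugate faithful $\CC T$-irreps, and $\Typ(\psi_j) = \Typ(\phi_j)$. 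The key computation here is that $\dim_\CC(\psi_j^H) = \dim_\CC(\ind_T^G(\phi_j)^H)$ can be expressed, via the Mackey/fixed-point formula, as a sum $\sum_{g} \dim_\CC(\phi_j^{{}^g H \cap T})$ over double coset representatives $g \in H\backslash G / T$; since $|G:T| = p = 2$, there are at most two such cosets, and one checks that the parity pattern of $\psi_j$ over all $H \leq G$ is nonzero if and only if the parity pattern of $\phi_j$ over all subgroups of $T$ is nonzero. (The point is that the inductive contributions from the two double cosets, viewed mod $2$, do not conspire to cancel the nonzero pattern of $\phi_j$ — this is where I expect to spend the most care.) That lets me pass from $G$ to $T$ and invoke the inductive hypothesis.

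We are then reduced to: $G$ a Roquette $2$-group, $\psi$ a faithful $\CC G$-irrep, and we must decide whether $H \mapsto \dim_\CC(\psi^H)$ is identically even. By Corollary 2.8 there is a single Galois class of faithful irreps, so this is a property of $G$ alone, and by Theorem 5.7 / Theorem 5.13 the genotype is $C_{2v}$, $D_{8v}$, $\SD_{8v}$ or $Q_{8v}$ for some power $v$ of $2$. A direct check, handled case by case using the standard matrix models from Lemmas 5.2--5.5, shows that for $G$ cyclic, dihedral or semidihedral the trivial character occurs exactly once in $\res_{\langle y\rangle}^G(\psi)$ for a suitable non-central involution $y$ (namely one of the reflections, or the element $d$ in the semidihedral case), so $\dim_\CC(\psi^{\langle y\rangle}) = 1$ is odd and the pattern is nonzero; whereas for $G = Q_{8v}$, since $\psi$ is \emph{not} afforded over $\RR$ (Lemma 5.5), every real subspace $\psi^H$ has $\dim_\CC(\psi^H)$ forced to be even — one verifies $\dim_\CC(\psi^H) \in \{0,2v,\dots\}$ directly, or notes that $\psi$ supports a quaternionic structure so every fixed subspace is a quaternionic subspace and hence $\CC$-even-dimensional. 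Thus the parity pattern is nonzero precisely when $\Typ(\psi)$ is cyclic, dihedral or semidihedral, i.e.\ $\Delta(\psi) \in \{\RR, \CC\}$ by Theorem 5.13, and zero precisely for $\Typ(\psi) = Q_{8v}$, i.e.\ $\Delta(\psi) = \HH$.

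Assembling: the columns of $\ooDie_G^\KK$ indexed by $\oopsi_j$ with $\Typ(\psi_j)$ cyclic, dihedral or semidihedral survive reduction mod $2$ as nonzero vectors; I would finish by showing they remain $\ZZ/2$-linearly independent. For this, note that whether $\dim_\CC(\psi_j^H)$ is odd is controlled, after the reductions above, by a single "detecting" subgroup associated to $\psi_j$ (up to conjugacy), and distinct genetic data give distinct detecting subgroups at the relevant level; more robustly, one can argue that the mod-$2$ tom Dieck map factors the injective integral one through the $\ZZ/2$-reduction of the saturated sublattice spanned by the non-$\HH$ columns, which is a direct summand, so no relations are created. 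The rank of the image is therefore exactly the number of Galois conjugacy classes of $\CC G$-abirreps — equivalently $\KK G$-abirreps, since $m(\psi) = 1$ exactly when $\Delta(\psi) \neq \HH$ — with cyclic, dihedral or semidihedral genotype, and these are precisely those with Frobenius--Schur type $\RR$ or $\CC$. The main obstacle, as flagged, is the mod-$2$ bookkeeping in the $|G:T| = 2$ induction step, where one must be sure the two Mackey contributions do not cancel the nonzero parity pattern; everything else is the standard genetic reduction plus the explicit Roquette computations already available from Section 5.
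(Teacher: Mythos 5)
Your overall skeleton (reduce to $\KK=\CC$, run the genetic reduction of Theorem 6.2, check the four Roquette families, and show quaternion-genotype irreps die mod $2$) is the paper's, and several pieces are fine: the Roquette case analysis matches Lemmas 5.2--5.5, and your observation that a quaternionic structure on $\psi_\CC$ forces every $\psi^H$ to be $\HH$-stable and hence of even complex dimension is actually a cleaner route to the vanishing of the quaternion columns than the paper's Mackey computation over a genetic subquotient. But there is a genuine gap in how you organize the non-quaternion half. You propose to (i) decide for each single class $\oopsi_j$ whether its parity pattern is nonzero, and then (ii) separately prove the surviving columns are $\ZZ/2$-independent. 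Step (i) already breaks at the point you flagged: for $H \leq T$ with $|G:T|=2$, Mackey gives $\dim_\CC(\psi_j^H) = \dim_\CC(\phi_{j,1}^H) + \dim_\CC(\phi_{j,2}^H)$ where $\phi_{j,1},\phi_{j,2}$ are the two Galois non-conjugate germs, and these two terms do conspire mod $2$ --- at $H=1$ the sum is $\deg\psi_j$, which is always even --- so ``the parity pattern of $\psi_j$ is nonzero iff that of $\phi_j$ is nonzero'' is false as a single-irrep statement; what survives the reduction is a $\ZZ/2$-linear \emph{combination} of columns over $T$, not a single column. Step (ii) is then either vague (the ``detecting subgroup'' is not well defined across the reduction, and distinct classes can share a genotype) or circular (asserting that the sublattice spanned by the non-$\HH$ columns is saturated, i.e.\ a direct summand, is precisely equivalent to the mod-$2$ independence you are trying to prove; integral injectivity from Theorem 6.2 does not give it).

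The repair is to collapse your two steps into one and run the entire reduction on an arbitrary nontrivial $\ZZ/2$-relation: assume $\sum_j \dim_\CC(\psi_j^H)$ is even for all $H$, with the $\psi_j$ mutually Galois non-conjugate of non-quaternion genotype and $|G|$ minimal; the kernel argument and the passage to $T$ (which replaces each $\psi_j$ by the family $\phi_{j,1},\dots,\phi_{j,p}$ of mutually non-conjugate germs, preserving genotype and producing a shorter counterexample over a smaller group) reduce to $r=1$ with $G$ Roquette non-quaternion and $\psi_1$ faithful, where your explicit fixed-point computations at $1$, $\la b\ra$, $\la d\ra$ give the contradiction. This is exactly the paper's argument, and you already have all the ingredients for it; the point is only that parity nonvanishing of individual columns is not the right invariant to propagate --- linear independence of the whole family is.
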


\begin{proof}
The rider will follow from the main part
together with Theorem 5.13. For any
$\KK G$-irrep $\psi$, the order $v(\psi)$ and
the Schur multiplier $m(\psi)$ are powers of
$2$. Therefore $\mod_2 \, \Die_G^\KK$ annihilates
any $\KK G$-irrep that is not absolutely
irreducible. So, if the required conclusion holds
for the algebraic closure of $\KK$, then it will
hold for $\KK$. Therefore, we may assume that
$\KK = \CC$.

Let $\psi_1$, $...$, $\psi_r$ be mutually Galois
non-conjugate $\CC G$-irreps with non-quaternion
genotypes. We claim that the linearly independent
elements $\oopsi_1$, $...$, $\oopsi_r$ of $\ooR(\CC
G)$ are sent by $\mod_2 \, \ooDie_G^\KK$ to linearly
independent elements of $\mod_2(C(G))$. Assuming
otherwise, and taking $r$ to be minimal, then
$$\ooDie_G^\KK(\oopsi_1) + ...
  + \Die_G^\KK(\oopsi_r) = 0.$$
That is to say, the dimension of $\psi_1^H \oplus
... \oplus \psi_r^H$ is even for all $H \leq G$.
Arguing as in the proof of the previous theorem,
we reduce to the case where $r = 1$ and $\psi_1$
is faithful and $G$ is a non-quaternion Roquette
$2$-group. Write $\psi = \psi_1$. If $G$ is
cyclic, then the subspace fixed by the trivial
group has dimension $\dim_\CC(\psi^1) = 1$,
and this is a contradiction. Supposing that
$G$ is dihedral, and writing $G = \la a, b \ra$
as in the standard presentation, then
$\dim_\CC(\psi^{\la b \ra}) = 1$, which is a
contradiction. In the semidihedral case we
obtain a contradiction using the standard
generator $d$ instead of $b$. Any which way,
we arrive at a contradiction. The claim is
established.

Now let $G$ be any $2$-group and let $\psi$ be
a $\CC G$-irrep having genotype $Q_{2^m}$ with
$m \geq 3$. The argument will be complete when
we have shown that $\ooDie_G^\CC(\oopsi) = 0$.
In other words, it remains only to show that
$\psi^H$ has even dimension for all $H \leq G$.
Consider the case where $G = Q_{2^m}$. The
center $Z = Z(G)$ has order $2$, and it is
contained in every non-trivial subgroup of
$G$. There are no non-zero $Z$-fixed points
in $\psi$, hence $\dim_\CC(\psi^H)$ is $2$
or $0$, depending on whether $H$ is trivial
or non-trivial, respectively. Either way,
$\dim_\CC(\psi^H)$ is even. Return now to
the case where $G$ is arbitrary. Let $L/M$
be a genetic subquotient for $\psi$ and let
$\phi$ be the germ of $\psi$ at $L/M$. Of
course, $L/M \cong Q_{2^m}$. By Frobenius
reciprocity and Mackey decomposition,
$$\dim_\CC(\psi^H) = \la 1 | \res_H^G \,
  \ind_L^G(\psi) \ra = \sum_{LgH \subseteq G}
  \la 1 | \res_{L \cap {}^g H}^L(\psi) \ra
  = \sum_{LgH \subseteq G}
  \dim_\CC(\phi^{L \cap {}^g H}).$$
From our discussion of the case $G = Q_{2^m}$,
we see that each term of the sum is
$0$ or $2$. So $\dim_\CC(\psi^H)$ is even.
\end{proof}

We now explain how, in the special case where
$\KK$ is a subfield of $\RR$, the group
$\mod_2(C(G))$ can be replaced with the unit
group $B(G)^\times$ of the Burnside ring $B(G)$.
This will lead to a new proof of a theorem of
Bouc. We sketch the prerequisite constructions,
referring to Yoshida \cite{Yos90} and
Yal\c{c}{\i}n \cite{Yal*} for details (we
employ much the same notation as the latter).
Given a $G$-set $X$, we write $[X]$ to denote
the isomorphism class of $X$ as an element of
$B(G)$. Recall that the species of $B(G)$
have the form $s_H : B(G) \ni [X] \mapsto
|X^H| \in \ZZ$, and two species $s_H$ and
$s_{H'}$ are equal if and only if $H$
and $H'$ are $G$-conjugate. Furthermore,
an element $x$ of $B(G)$ is determined by
the superclass function $H \mapsto s_H(x)$.
Note that $x$ is a unit if and only if $s_H(x)
= \pm 1$ for all $H$. Therefore, $B(G)^\times$
is an elementary abelian $2$-group.

Given an integer $c$ (possibly given only up
to congruence modulo $2$), we write
$\ppar(c) = (-1)^c$. Another theorem of tom
Dieck \cite[5.5.9]{Die79} asserts that, given
any $\RR G$-rep $\xi$, then there is an element
$\die_G^\RR(\xi) \in B(G)^\times$ such that
$s_H(\die_G^\RR(\xi)) = \ppar(\dim_\RR(\xi^H))$.
Hence, when $\KK$ is a subfield of $\RR$, there
is a linear map
$$\die_G^\KK \: : \: R(\KK G) \rightarrow
  B(G)^\times$$
such that, again, $s_H(\die_G^\KK(\xi)) =
\ppar(\dim_\KK(\xi^H))$. Since $\die_G^\KK$
annihilates the ideal $I(\KK G)$, it gives
rise to a linear map
$$\oodie_G^\KK \: : \: \ooR(\KK G)
  \rightarrow B(G)^\times \, .$$
The maps $\die_G^\KK$ and $\oodie_G^\KK$ are
called {\bf tom Dieck maps} because, as we
shall see in a moment, they are essentially
the same as the maps $\Die_G^\KK$ and
$\ooDie_G^\KK$. But let us emphasize that
$\die_G^\KK$ and $\oodie_G^\KK$ are defined
only when $\KK \leq \RR$.

The following result can be quickly obtained
from the special case $\KK = \RR$, which is
equivalent to Bouc \cite[8.5]{Bou*}. See
Corollary 6.6. We give a different proof.

%Theorem 6.4
\begin{thm}
{\rm (Bouc)}
Suppose that $\KK \leq \RR$. Then the image
$\Im(\oodie_G^\KK : \ooR(\KK G) \rightarrow
B(G)^\times)$ is an elementary abelian $2$-group
whose rank is the number of Galois conjugacy
classes of absolutely irreducible $\KK G$-irreps.
\end{thm}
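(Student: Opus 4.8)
The plan is to recognise $\oodie_G^\KK$, up to an injective relabelling of its target, as the tom Dieck map $\mod_2\,\ooDie_G^\KK$ treated in Theorem 5.15, and then to rewrite the genotype-restricted enumeration supplied by that theorem as the plain enumeration of absolutely irreducible irreps, exploiting the hypothesis $\KK\leq\RR$. For the relabelling, note that every $x\in B(G)^\times$ satisfies $s_H(x)=\pm 1$ for all $H\leq G$, so the assignment $x\mapsto\bigl(H\mapsto\ppar^{-1}(s_H(x))\bmod 2\bigr)$ defines a map $\iota:B(G)^\times\to\mod_2(C(G))$, and $\iota$ is an injective group homomorphism because $\ppar:\ZZ/2\to\{\pm 1\}$ is an isomorphism and the species $s_H$ are multiplicative and jointly separate the elements of $B(G)$. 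From $s_H(\die_G^\KK(\xi))=\ppar(\dim_\KK(\xi^H))$ and $\Die_G^\KK(\xi)(H)=\dim_\KK(\xi^H)$ we obtain $\iota\circ\die_G^\KK=\mod_2\,\Die_G^\KK$, hence also $\iota\circ\oodie_G^\KK=\mod_2\,\ooDie_G^\KK$ after passing to $\ooR(\KK G)$; since $\iota$ is injective, it carries $\Im(\oodie_G^\KK)$ isomorphically onto $\Im(\mod_2\,\ooDie_G^\KK)$, so the two groups have the same rank.

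Suppose first that $p=2$. Then Theorem 5.15 identifies this common rank as the number of Galois conjugacy classes of absolutely irreducible $\KK G$-irreps with cyclic, dihedral or semidihedral genotype, and it remains to check that for $\KK\leq\RR$ the genotype restriction is vacuous. Let $\psi$ be an absolutely irreducible $\KK G$-irrep. Then $\psi\otimes_\KK\RR$ is absolutely irreducible over $\RR$ --- since $\psi\otimes_\KK\CC\cong(\psi\otimes_\KK\RR)\otimes_\RR\CC$ is irreducible, so is $\psi\otimes_\KK\RR$, and then so is its complexification --- and it is quasiconjugate to $\psi$ (both lie over $\psi_\QQ$), so it may be taken as $\psi_\RR$; hence $\Delta(\psi)=\RR$. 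By Theorem 5.13 the genotype of $\psi$ is then $C_1$, $C_2$, $C_{2v}$ or $D_{8v}$, and in particular it is cyclic or dihedral. Thus every Galois conjugacy class of absolutely irreducible $\KK G$-irreps already appears in the count of Theorem 5.15, so that count equals the total number of Galois conjugacy classes of absolutely irreducible $\KK G$-irreps, as required.

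Now suppose $p$ is odd; here Theorem 5.15 is not available, but the assertion is nearly trivial. A genotype-$C_{p^m}$ irrep with $m\geq 1$ has Schur index $m(\psi)=1$ by Schilling's Theorem 5.9, so by Lemma 5.1 its unique minimal splitting field is $\VV(\psi)=\QQ_{p^m}$, which is not real; therefore no nontrivial $\KK G$-irrep can be absolutely irreducible over $\KK\leq\RR$, and the right-hand side of the theorem equals $1$. On the other hand $\die_G^\KK(1)$ has $s_H=-1$ for every $H$ and so is not the identity of $B(G)^\times$, while for any nontrivial $\KK G$-irrep $\psi$ --- passing to $G/\Ker(\psi)$, we may assume $\psi$ faithful --- one has $\dim_\KK(\psi^H)=m_\KK^\CC(\psi)\,v_\KK^\CC(\psi)\,\langle 1 \,|\, \res_H\chi\rangle$ for a complex constituent $\chi$ of $\psi\otimes_\KK\CC$, where $m_\KK^\CC(\psi)=1$ by Schilling and $v_\KK^\CC(\psi)=[\KK\QQ_{p^m}:\KK]$ is even because $\QQ_{p^m}$ is not real; hence $\dim_\KK(\psi^H)$ is always even and $\die_G^\KK(\psi)=1$. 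Consequently $\Im(\oodie_G^\KK)=\langle\die_G^\KK(1)\rangle$ has rank $1$, matching the right-hand side.

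The main obstacle I anticipate is the genotype bookkeeping in the case $p=2$: one must be certain that, once $\KK\leq\RR$, absolute irreducibility of $\psi$ over $\KK$ confines the genotype of $\psi$ to the cyclic and dihedral families, so that the restriction ``cyclic, dihedral or semidihedral'' in Theorem 5.15 loses nothing. This rests on two facts already in hand --- that an absolutely irreducible representation over a real field has Frobenius--Schur type $\RR$, via the tensor identity above, and that by Theorem 5.13 Frobenius--Schur type $\RR$ for an irrep of a $p$-group forces a cyclic or dihedral genotype --- so there is no genuinely new difficulty, only the care needed to line these up with Theorem 5.15 and to dispose separately of the degenerate situations $p$ odd and $|\Typ(\psi)|\leq 2$.
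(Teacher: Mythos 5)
Your proof is correct and follows essentially the same route as the paper: embed $B(G)^\times$ into $\mod_2(C(G))$ so that $\oodie_G^\KK$ matches up with $\mod_2 \, \ooDie_G^\KK$, invoke the preceding tom Dieck theorem on the rank of $\Im(\mod_2 \, \ooDie_G^\KK)$ (your ``Theorem 5.15'', the paper's Theorem 6.3), and observe that for $\KK \leq \RR$ every abirrep has Frobenius--Schur type $\RR$, so the cyclic/dihedral/semidihedral restriction is vacuous. The only divergence is the odd-$p$ case, where the paper simply cites tom Dieck's result that $B(G)^\times \cong C_2$ for groups of odd order, whereas you verify directly that every non-trivial irrep has even fixed-point dimensions and hence maps to the trivial unit; both dispositions are sound.
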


\begin{proof}
When $p$ is odd, the assertion is virtually
trivial. Indeed, for any group of odd
order, the trivial irrep is the unique
real abirrep, and meanwhile, tom Dieck
\cite[1.5.1]{Die79} asserts that, for any
group of odd order, the unit group of the
Burnside ring is isomorphic to $C_2$.

Suppose that $p = 2$. The tom Dieck maps $\mod_2
\, \ooDie_G^\KK : \ooR(\KK G) \rightarrow
\mod_2(C(G))$ and $\mod_2 \, \oodie_G^\KK :
\ooR(\KK G) \rightarrow B(G)^\times$ commute
with the monomorphism of elementary abelian
$2$-groups $B(G)^\times \rightarrow \mod_2(C(G))$
which sends a unit $u$ to the superclass function
$f$ such that $s_H(u) = \ppar(f(H))$. By Theorem
6.3, the rank $\rk(\Im(\mod_2 \, \ooDie_G^\KK))
= \rk(\Im(\mod_2 \, \oodie_G^\KK))$ is the number
of Galois conjugacy classes of $\KK G$-abirreps
with Frobenius--Schur type $\RR$ or $\CC$. The
hypothesis on $\KK$ ensures that all the
$\KK G$-abirreps have type $\RR$.
\end{proof}

In order to recover the two special cases
stated in Bouc \cite[8.5, 8.7]{Bou*}, we need
the following result, which was obtained by
Tornehave \cite{Tor84} using topological
methods. Another proof was given by
Yal\c{c}{\i}n \cite{Yal*} using algebraic
methods. Actually, Yal\c{c}{\i}n reduced
to the case of a Roquette $2$-group, and
his paper is another application of the
genetic reduction technique. Note that
the case of odd $p$ is trivial.

%Theorem 6.5
\begin{thm}
{\rm (Tornehave)}
The tom Dieck map $\oodie_G^\RR : \ooR(\RR G)
\rightarrow B(G)^\times$ is surjective.
\end{thm}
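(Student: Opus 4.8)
The plan is to combine Theorem 6.4 with the genetic reduction technique, transported from irreps to unit groups of Burnside rings. First, the case of odd $p$ is trivial: by tom Dieck \cite[1.5.1]{Die79} one has $B(G)^\times \cong C_2$, and $-[G/G] = \oodie_G^\RR(\ol\xi)$ where $\xi$ is the trivial $\RR G$-representation, since $\dim_\RR(\xi^H) = 1$ is odd for every $H \leq G$. So assume $p = 2$. By Theorem 6.4, $\Im(\oodie_G^\RR)$ is an elementary abelian $2$-group whose rank $r$ is the number of Galois conjugacy classes of absolutely irreducible $\RR G$-irreps. Since $B(G)^\times$ is itself a finite elementary abelian $2$-group containing $\Im(\oodie_G^\RR)$, we have $r \leq \rk(B(G)^\times)$, and surjectivity is equivalent to the reverse inequality $\rk(B(G)^\times) \leq r$ (a subgroup of full rank in a finite elementary abelian $2$-group being the whole group). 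Equivalently, it suffices to exhibit a generating set of $B(G)^\times$ inside the image; that is what I would do, by induction on $|G|$.

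The induction rests on two compatibilities. First, inflation along $G \rightarrow G/N$ induces a ring homomorphism $B(G/N) \rightarrow B(G)$, hence a monomorphism $B(G/N)^\times \rightarrow B(G)^\times$, and for any $\RR(G/N)$-rep $\xi$ one has $\oodie_G^\RR(\ol{\inf_{G/N}^G \xi}) = \inf_{G/N}^G \oodie_{G/N}^\RR(\ol\xi)$, because fixed points and inflation interact in the obvious way. So every unit of $B(G)^\times$ that is inflated from a proper quotient already lies in the image, by the inductive hypothesis. Second, restriction to a subgroup $L \leq G$ induces ring homomorphisms compatible with $\die$ and $\oodie$, and there is a multiplicative induction on Burnside rings under which, I expect, $\oodie$ behaves controllably. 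It remains to handle a \emph{faithful} unit $u \in B(G)^\times$, meaning one not inflated from any proper quotient.

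For the faithful case I would mimic Lemma 4.2. If $G$ is not Roquette, pick $E \unlhd G$ with $E \cong V_4$ and $|E \cap Z(G)| = 2$, and set $T = C_G(E)$, a maximal subgroup of $G$, exactly as there; the goal is to show that a faithful unit of $B(G)^\times$ is determined, compatibly with $\oodie$, by units of the strictly smaller group $T$ (or of $T/(E \cap T)$), which lie in the image by induction --- the Burnside-theoretic shadow of the statement that a faithful $\RR G$-irrep is tightly induced from $T$. \textbf{The hard part} is exactly this step: units of $B(G)^\times$ carry no direct-sum structure, so the reduction cannot be read off a character decomposition and must instead be effected through the restriction and multiplicative-induction maps on Burnside rings, their defining congruences (the ghost-ring description), and a careful check that $\oodie$ commutes with these operations. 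This is the substance of Yal\c{c}{\i}n's algebraic proof \cite{Yal*}, which, as noted above, is itself an application of genetic reduction.

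Finally, the base case: $\oodie_R^\RR$ is surjective for each Roquette $2$-group $R$, i.e.\ for $R$ one of $C_{2^n}$, $D_{2^m}$, $\SD_{2^m}$, $Q_{2^m}$. This is a finite computation, and by Theorem 6.4 it reduces to checking that $\rk(B(R)^\times)$ equals the number of Galois conjugacy classes of absolutely irreducible $\RR R$-irreps, both quantities being read off the subgroup lattice of $R$ and its character table; alternatively one exhibits generators of $B(R)^\times$ directly as $\oodie_R^\RR$ of the trivial representation (giving $-[R/R]$), of the linear characters, and of restrictions of the faithful irreducible real representation, supplemented by units inflated from proper quotients. The only delicate point is $R = Q_{2^m}$: its faithful real irreps have Frobenius--Schur type $\HH$, so they are not absolutely irreducible and contribute only trivial units --- consistently with $\ooDie_{Q_{2^m}}^\CC(\ol\psi) = 0$, shown in the proof of Theorem 6.3 --- and $B(Q_{2^m})^\times$ is accordingly inflated from the quotient $Q_{2^m}/Z(Q_{2^m})$ (dihedral, or $V_4$ when $m = 3$), so this case is subsumed by the dihedral one. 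Assembling the base case with the inflation reduction and the faithful-unit reduction completes the proof.
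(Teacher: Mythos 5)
You should first note that the paper does not prove this theorem at all: it is quoted as an external result of Tornehave \cite{Tor84}, with Yal\c{c}{\i}n \cite{Yal*} cited for an algebraic proof. So there is no internal argument to compare against; the only question is whether your proposal constitutes a proof on its own. It does not. Your treatment of odd $p$ is correct, and your reformulation of surjectivity as the rank inequality $\rk(B(G)^\times) \leq r$ via Theorem 6.4 is sound, as is the observation that inflation commutes with $\oodie$ (since $(\inf_{G/N}^G\xi)^H = \xi^{HN/N}$ and likewise for $G$-sets), which disposes of units inflated from proper quotients by induction on $|G|$.

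The genuine gap is the step you yourself flag as ``the hard part'': showing that a faithful unit of $B(G)^\times$, for $G$ non-Roquette, lies in the image. Here the analogy with Lemma 4.2 does not transfer by itself. That lemma lives on the representation side, where Clifford theory decomposes $\res_T^G(\psi)$ into a sum of non-conjugate constituents; a unit $u \in B(G)^\times$ has no such decomposition, and there is no a priori reason why $u$ should be controlled by $\res_T^G(u)$ or recovered by multiplicative induction from $T$ --- indeed the multiplicative induction (tensor induction / Gordon--Yoshida map) does not commute with $\oodie$ in any naive way, and establishing the correct compatibility and the resulting generation statement for $B(G)^\times$ is precisely the content of Yal\c{c}{\i}n's paper. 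Deferring that step to \cite{Yal*} means your proposal is an outline of the known proof rather than a proof; alternatively, computing $\rk(B(G)^\times)$ directly for arbitrary $2$-groups (your other suggested route) is itself equivalent to Corollary 6.6, which the paper deduces \emph{from} this theorem, so that route is circular unless the rank computation is done independently. Your base-case analysis for the Roquette $2$-groups, including the observation that $B(Q_{2^m})^\times$ is inflated from the dihedral quotient because the faithful quaternionic irreps contribute trivially, is correct and consistent with the proof of Theorem 6.3, but it cannot carry the theorem without the missing reduction.
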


From the latest two results, we recover the
following corollary, which was obtained by
Bouc \cite[8.5]{Bou*} using a filtration of
$B(\dash)^\times$ as a biset functor.

%Corollary 6.6
\begin{cor}
{\rm (Bouc)}
The number of Galois conjugacy classes of
$\RR G$-abirreps is the rank of $B(G)^\times$
as an elementary abelian $2$-group.
\end{cor}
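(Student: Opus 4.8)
The plan is to obtain the corollary by directly combining the two immediately preceding theorems, so that no new argument is really needed. First I would invoke Tornehave's Theorem 6.5, which asserts that the tom Dieck map $\oodie_G^\RR : \ooR(\RR G) \rightarrow B(G)^\times$ is surjective. Consequently $\Im(\oodie_G^\RR) = B(G)^\times$, the entire unit group of the Burnside ring.

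Next I would apply Bouc's Theorem 6.4 in the admissible special case $\KK = \RR$ (legitimate, since $\RR$ is a subfield of $\RR$). That theorem tells us that $\Im(\oodie_G^\RR)$ is an elementary abelian $2$-group whose rank equals the number of Galois conjugacy classes of absolutely irreducible $\RR G$-irreps, i.e.\ of $\RR G$-abirreps. Substituting $B(G)^\times$ for the image, we conclude that the rank of $B(G)^\times$ as an elementary abelian $2$-group is precisely the number of Galois conjugacy classes of $\RR G$-abirreps, which is the assertion.

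Since both ingredients are quoted as established results, there is essentially no obstacle to overcome; the only point deserving a word of care is terminological, namely to observe that ``$\RR G$-abirrep'' and ``absolutely irreducible $\RR G$-irrep'' name the same objects, so that the count supplied by Theorem 6.4 is literally the quantity appearing in the statement. I would also note in passing that the odd-$p$ case is already covered: for $|G|$ odd one has $B(G)^\times \cong C_2$ of rank $1$, and the trivial irrep is the unique $\RR G$-abirrep, in agreement with the general conclusion.
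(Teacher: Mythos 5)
Your proposal is correct and matches the paper's own derivation exactly: the corollary is obtained by combining Theorem 6.5 (surjectivity of $\oodie_G^\RR$, so the image is all of $B(G)^\times$) with Theorem 6.4 in the case $\KK=\RR$ (which gives the rank of that image as the number of Galois conjugacy classes of $\RR G$-abirreps).
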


It is worth pointing out how the general
version of Bouc's Theorem 6.4 covers another of
his results, \cite[8.7]{Bou*}, which we shall
present as the next corollary. Let $\lin_G$
denote the linearization map $B(G) \rightarrow
R(\KK G)$. We mean to say, $\lin_G[X]$
is the permutation $\KK G$-rep associated
with the $G$-set $X$. Let $\exp_G$ denote
the exponential map $B(G) \rightarrow
B(G)^\times$. We mean to say, $s_H(\exp_G[X])$
is the number of $H$-orbits in $X$. To see that
this condition really does determine a unit
$\exp_G[X] \in B(G)^\times$, observe that we
have a commutative triangle $\exp_G =
\die_G^\KK \, \lin_G$.

%Corollary 6.7
\begin{cor}
{\rm (Bouc)}
Suppose that $p = 2$. Then the number of
$\QQ G$-abirreps is $\rk(\Im(\exp_G))$.
Furthermore, $\exp_G$ is surjective
if and only if there are no $\QQ G$-irreps
with dihedral genotype.
\end{cor}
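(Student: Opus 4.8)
The plan is to exploit the commutative triangle $\exp_G = \die_G^\KK \, \lin_G$ recorded above, specialised to $\KK = \QQ$, and to reduce both halves of the corollary to quantities already pinned down in Sections 5 and 6.

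For the first assertion I would begin by invoking the classical theorem of Ritter and Segal: for a $p$-group $G$ every $\QQ G$-module is a $\ZZ$-linear combination of permutation modules, so that $\lin_G : B(G) \rightarrow R(\QQ G)$ is surjective. Hence $\Im(\exp_G) = \die_G^\QQ(\lin_G(B(G))) = \die_G^\QQ(R(\QQ G))$, and since $\die_G^\QQ$ annihilates $I(\QQ G)$ it factors through $\ooR(\QQ G)$ with the same image, so $\Im(\exp_G) = \Im(\oodie_G^\QQ)$. Now Theorem 6.4, applied with $\KK = \QQ$ (legitimate since $\QQ \leq \RR$), identifies $\Im(\oodie_G^\QQ)$ as an elementary abelian $2$-group whose rank is the number of Galois conjugacy classes of absolutely irreducible $\QQ G$-irreps. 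Each such abirrep $\psi$ has $\QQ[\psi] = \QQ$, hence is fixed by every Galois automorphism and forms a singleton Galois conjugacy class, so that rank is precisely the number of $\QQ G$-abirreps. This is the first assertion.

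For the second assertion, $\Im(\exp_G)$ is a subgroup of the finite elementary abelian $2$-group $B(G)^\times$, so $\exp_G$ is surjective if and only if $\rk(\Im(\exp_G)) = \rk(B(G)^\times)$. By the first assertion the left-hand side is the number of $\QQ G$-abirreps, and by Corollary 6.6 the right-hand side is the number of Galois conjugacy classes of $\RR G$-abirreps. I would evaluate both counts through the genotype. Writing $k_R(G)$ for the number of Galois conjugacy classes of $\KK G$-irreps of genotype $R$ --- independent of $\KK$ by Remark 6.1, and equal to the number of $\QQ G$-irreps of genotype $R$ since a $\QQ G$-irrep has rational character values and so is its own Galois conjugacy class --- I note from Theorem 5.13 that a $\QQ G$-irrep is absolutely irreducible exactly when its genotype is $C_1$ or $C_2$, whence the number of $\QQ G$-abirreps is $k_{C_1}(G) + k_{C_2}(G)$. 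An $\RR G$-irrep is absolutely irreducible exactly when its Frobenius--Schur type is $\RR$, and by Lemmas 5.1--5.5 together with the degenerate cases this happens precisely for the genotypes $C_1$, $C_2$ and $D_{2^m}$ with $m \geq 4$, i.e.\ the dihedral Roquette $2$-groups. Hence the number of Galois conjugacy classes of $\RR G$-abirreps is $k_{C_1}(G) + k_{C_2}(G) + \sum_{m \geq 4} k_{D_{2^m}}(G)$. Comparing the two counts, $\exp_G$ is surjective if and only if $\sum_{m \geq 4} k_{D_{2^m}}(G) = 0$, that is, if and only if $G$ has no $\QQ G$-irrep of dihedral genotype.

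The single genuinely external ingredient is the Ritter--Segal theorem; everything else is bookkeeping against Theorems 5.13 and 6.4 and Corollary 6.6, which is the part I expect to need the most care. The delicate point is the exact list of genotypes carrying Frobenius--Schur type $\RR$: I would read this off Lemmas 5.1--5.5 directly, so that the cyclic groups $C_{2v}$ with $v \geq 2$ contribute type $\CC$ as in Lemma 5.2. A minor point is the passage from $R(\QQ G)$ to $\ooR(\QQ G)$, but $\die_G^\QQ$ and $\oodie_G^\QQ$ have the same image in $B(G)^\times$ by construction, so nothing is lost.
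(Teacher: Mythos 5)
Your proposal is correct and follows essentially the same route as the paper: Ritter--Segal plus Theorem 6.4 (with $\KK = \QQ$) for the rank statement, then comparison with Corollary 6.6 via the genotype classification of abirreps ($C_1$, $C_2$ for $\QQ$; $C_1$, $C_2$, dihedral for $\RR$) for the surjectivity criterion. The paper's proof is just a terser version of the same bookkeeping.
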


\begin{proof}
The first part follows from Theorem 6.4 together
with the Ritter--Segal Theorem, which asserts
that the linearization map $\lin_G^\QQ : B(G)
\rightarrow R(\QQ G)$ is surjective.
By Theorem 5.13, the $\QQ G$-abirreps are
precisely the $\QQ G$-irreps whose genotype
is $C_1$ or $C_2$. Also, the $\RR G$-abirreps
are precisely the $\RR G$-irreps whose genotype
is $C_1$ or $C_2$ or dihedral. The rider is
now clear.
\end{proof}

We end with a further comment on Corollary 6.6.
Embedding $R(\RR G)$ in $R(\CC G)$ via the tensor
product $\CC \otimes_\RR \dash$, we define an
elementary abelian $2$-group
$\ooO(G) = R(\RR G)/(R(\RR G) \cap I^+(\CC G))$.
Here, $I^+(\CC G)$ is the ideal of $R(\CC G)$
generated by the elements having the form
$\xi + \xi'$, where $\xi$ and $\xi'$ are Galois
conjugate $\CC G$-reps. The tom Dieck map
$\oodie_G^\RR$ gives rise to a map $\die_G :
\ooO(G) \rightarrow B(G)^\times$. Theorem 6.5
says that $\oodie_G$ is surjective. Corollary 6.6
says that, in fact, $\oodie_G$ is an isomorphism.
Let $B_0(G) = \Ker(\lin_G^\RR)$. Tornehave's
topological proof of Theorem 6.5 can be recast
in a purely algebraic way which involves maps
$\rip_G : B_0(G) \rightarrow \ooO(G)$ and
$\torn_G : B_0(G) \rightarrow B(G)^\times$
such that $\torn_G = \oodie_G \, \rip_G$. Such
commutative triangles can still be defined when
$G$ is replaced by an arbitrary group, although
$\oodie_G$ ceases to be an isomorphism in
general. The author intends, in a future paper,
to discuss some extensions and adaptations of
these results of Tornehave. The present paper
arose from that work.

\end{document}